\documentclass[a4paper]{amsart}
\usepackage{xcolor} 
\usepackage{graphicx}
\usepackage[toc]{appendix} 
\usepackage{amsmath, amssymb}
\usepackage{amsfonts} 
\usepackage{amsthm} 
\usepackage{systeme}
\usepackage[top=3cm, bottom=3cm,inner=1.5cm, outer=1.5cm]{geometry}	
\usepackage{color}   
\usepackage{hyperref}
\hypersetup{
    colorlinks=true, 
    linktoc=all,    
    linkcolor=black,  
    citecolor = black,
}

\newtheorem{theorem}{Theorem}[section] 
\newtheorem{lemma}[theorem]{Lemma}

\newtheorem{prop}[theorem]{Proposition}

\numberwithin{equation}{section}

\newcommand{\abs}[1]{\lvert #1 \rvert}
\newcommand{\sumstack}[1]{{\substack{#1}}}
\newcommand{\nsmod}[1]{\text{ }\mathrm{mod}\text{ $#1$}} 
\newcommand{\fq}[0]{\mathbb{F}_q}
\DeclareMathOperator{\Tr}{Tr}
\DeclareMathOperator*{\GLtwo}{GL_2}
\DeclareMathOperator*{\SLtwo}{SL_2}
\DeclareMathOperator*{\vol}{Vol}
\theoremstyle{definition}

\title{On the counting function of cubic function fields}
\author{Victor Ahlquist}
\address{Department of Mathematical Sciences, Chalmers University of Technology and the University \newline
	\rule[0ex]{0ex}{0ex}\hspace{8pt} of Gothenburg, SE-412 96 Gothenburg, Sweden}
\email{vicahlqu@chalmers.se} 
\date{}
\begin{document}

\begin{abstract}
    We study the counting function of cubic function fields. Specifically, we derive an asymptotic formula for this counting function including a secondary term and an error term of order $\mathcal{O}\big(X^{2/3+\epsilon}\big)$, which matches the best-known result, due to Bhargava, Taniguchi and Thorne, over $\mathbb{Q}$. Furthermore, we obtain estimates for the refined counting function, where one specifies the splitting behaviour of finitely many primes. Also in this case, our error term matches what is known for number fields. However, in the function field setting, the secondary term becomes more difficult to write down explicitly.

    Our proof uses geometry of numbers methods, which are especially effective for function fields. In particular, we obtain an exact formula for the number of orbits of cubic forms with fixed absolute discriminant. Moreover, by studying the one-level density of a family of Artin $L$-functions associated to these cubic fields, we prove an unconditional lower bound on the error term in the estimate for the refined counting function. This generalises a conditional result over $\mathbb{Q}$, due to Cho, Fiorilli, Lee and Södergren.
\end{abstract}
\maketitle

\section{Introduction}
Let $Y $ be a positive real number and let $$\mathcal{F}^{\pm}_{\mathbb{Q}}(Y) = \{L: [L:\mathbb{Q}] = 3, \,\, 0\leq \pm \mathrm{Disc}(L) \leq Y\},$$ i.e. $\mathcal{F}^{\pm}_{\mathbb{Q}}(Y)$ consists of cubic number fields with discriminant in the range specified above. We remark that we only include one field from each isomorphism class. The counting function of this set was studied by Davenport and Heilbronn \cite{DH}, who proved the existence of constants $C_1^\pm$ such that
\begin{equation}\label{littleocubic}
    \#\mathcal{F}^{\pm}_{\mathbb{Q}}(Y) = C_1^{\pm}Y + o(Y).
\end{equation}
The error term above was improved to $\mathcal{O}\big(Y^{7/8+\epsilon}\big)$ by Belabas, Bhargava and Pomerance \cite{BBP}. Furthermore, an analogue of \eqref{littleocubic} was proven by Bhargava, Shankar and Wang \cite{BSW}, with $\mathbb{Q}$ replaced by an, essentially, arbitrary global base field.

In contrast to the case of counting quadratic fields, where one quite easily proves the existence of a main term of size $Y$ together with an error term of quality $\mathcal{O}\big(Y^{1/2+\epsilon}\big)$, the counting function $\#\mathcal{F}^{\pm}_{\mathbb{Q}}(Y)$ has a secondary term, of significant size, in addition to the main term. Indeed, the asymptotic formula 
\begin{equation}\label{cubiccountwerr}
    \#\mathcal{F}^{\pm}_{\mathbb{Q}}(Y) = C_1^{\pm}Y + C_2^{\pm}Y^{5/6} + \mathcal{O}\big(Y^{\omega+\epsilon}\big),
\end{equation}
with $\omega < 5/6$, was conjectured by Roberts \cite{Roberts}, and then proven independently by Bhargava, Shankar and Tsimermann \cite{BST} and by Taniguchi and Thorne \cite{TT}. The currently best-known result is $\omega = 2/3$, proven by Bhargava, Taniguchi and Thorne \cite{BTT}. 

Fix primes $p_1,...,p_n$ and splitting types $S_1,..., S_n$, with each $S_i$ representing either the completely split, partially split, inert, partially ramified or totally ramified case. Then, one may consider the refined counting function $\mathcal{F}^{\pm}_{\mathbb{Q}}(p_1,...,p_n; S_1,..., S_n;Y)$ counting the number of cubic number fields with the same condition on the discriminant as before, but also requiring that each $p_i$ splits according to the splitting type $S_i.$ The currently best-known estimate for this counting function was also proven in \cite{BTT}; specifically, one has
\begin{equation}\label{numbfieldsplitres}
    \#\mathcal{F}^{\pm}_{\mathbb{Q}}(p_1,...,p_n; S_1,...,S_n;Y) = C_{1,p_1,...,p_n,S_1,...,S_n}^{\pm}Y + C_{2,p_1,...,p_n,S_1,...,S_n}^{\pm}Y^{5/6} + \mathcal{O}\big(Y^{\theta+\epsilon}(p_1\cdots p_n)^{\omega}\big),
\end{equation}
with $\theta = \omega  = 2/3$.

The proofs in \cite{TT} and \cite{BTT} make use of the analytic theory of certain Shintani zeta functions. In particular, the appearance of the secondary term in the counting function can be explained by these zeta functions having poles, not only at $1$, but also at $5/6$. In \cite{BST}, the counting function is instead studied using geometry of numbers methods, and the secondary term appears as a consequence of secondary terms in certain theorems for lattice point counting. Another proof of a closely related result, based on studying Heegner points, is due to Hough, see the discussion after \cite[Theorem 2.1]{Hough2}. See also \cite{Chang} for a different perspective on the second-order term.

One may also consider the problem of finding a lower bound for the error term in \eqref{numbfieldsplitres}. Such a result was obtained, conditional on the Riemann Hypothesis for Dedekind zeta functions associated with cubic number fields, by Cho, Fiorilli, Lee and Södergren \cite{CFLS}. Specifically, they proved that under these assumptions, one cannot have $\omega + \theta < 1/2$ in \eqref{numbfieldsplitres}. The proof of this result is based on an investigation of the low-lying zeros of an associated family of Artin $L$-functions using the so-called one-level density.

Instead of studying cubic number fields, one may study cubic function fields. To be precise, let $q$ be a fixed prime power, and consider degree three extensions of $\fq(T)$, where $\fq$ is the finite field with $q$ elements. We will always assume that $2,3 \nmid q$. Then, from the Riemann-Hurwitz theorem, we know that the (absolute) discriminant of a cubic function field is an even power of $q$. For such a number $Y$, we define $$\mathcal{F}(Y) = \{L: [L:\fq(T)] = 3, \,\,  \mathrm{Disc}(L) = Y\},$$ where we have suppressed the dependence on $q$. Note that the condition governing the size of the discriminant is now an equality, as is usual when studying function fields.

The existence of a main term in the counting function $\#\mathcal{F}(Y)$ was shown in 1988 by Datskovsky and Wright \cite{DW}. This counting function was later studied by Zhao \cite{Zhao}, who isolated a secondary term using algebro-geometric methods, with a claimed\footnote{See Section \ref{relationch}.} bound for the error term of order $o\big(Y^{5/6}\big)$. Our main goal is to improve the bound on the error term to the same quality as obtained over $\mathbb{Q}$, while also allowing for splitting conditions at finitely many primes. Furthermore, we do so using geometry of numbers methods similar to those in \cite{BST}, through essentially elementary arguments. We should, however, remark that our arguments are in a sense equivalent to the approach using Shintani zeta functions, but the non-archimedean geometry of $\fq(T)$ allows us to bypass the explicit use of the theory of Shintani zeta functions.

Our first main result is the following theorem, establishing an asymptotic formula for the number of cubic extensions of $\fq(T)$.
\begin{theorem}\label{allfieldthm}
    The number of cubic function field extensions of $\fq(T)$ with discriminant $Y = q^{M}$, where $M \geq 0$ is even, is equal to
    \begin{equation}\label{countallflds}
        \frac{(q^2-1)(q^3-1)}{q^4(q-1)}Y - \frac{q^2-1}{q}C_2(M)Y^{5/6} + \mathcal{O}\left(Y^{2/3+\epsilon}\right),
    \end{equation}
    where\footnote{Note that the coefficient of the secondary term depends on the congruence class of $M$ modulo $3$. 
    }
    \begin{equation*}
        C_2(M) = \begin{cases}
    q^{-2}(q+1)     ,\,\, &M \equiv 0 \nsmod{3},\\
    q^{-4/3}      ,\,\, &M \equiv 1 \nsmod{3},\\
    q^{-5/3}(q+1) ,\,\, &M \equiv 2 \nsmod{3}.
    \end{cases}
    \end{equation*}
\end{theorem}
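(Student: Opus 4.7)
The plan is to use the Delone--Faddeev correspondence over $\mathbb{F}_q[T]$, which provides a discriminant-preserving bijection between isomorphism classes of cubic $\mathbb{F}_q[T]$-algebras of discriminant $q^M$ and $\GLtwo(\mathbb{F}_q[T])$-orbits of integral binary cubic forms $f$ with $|\mathrm{Disc}(f)| = q^M$. Cubic function field extensions of $\mathbb{F}_q(T)$ correspond precisely to the \emph{maximal irreducible} orbits, i.e., orbits whose associated cubic ring is integrally closed in its fraction field; this is a condition that is local at each monic irreducible polynomial of $\mathbb{F}_q[T]$.

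The first main step is to count \emph{all} $\GLtwo(\mathbb{F}_q[T])$-orbits of non-degenerate integral binary cubic forms with $|\mathrm{Disc}| = q^M$ by the geometry of numbers. The action of $\GLtwo(\mathbb{F}_q((1/T)))$ on the four-dimensional space of binary cubic forms admits an explicit fundamental domain whose non-archimedean structure should make the lattice point count inside the slice $\{|\mathrm{Disc}| = q^M\}$ an \emph{exact} expression, rather than merely an asymptotic one, in agreement with the exact orbital formula announced in the abstract. The leading term $\tfrac{(q^2-1)(q^3-1)}{q^4(q-1)}\,Y$ will arise from the generic interior of the fundamental domain, while the secondary term $-\tfrac{q^2-1}{q}\,C_2(M)\,Y^{5/6}$ will arise from the contribution of the cusp, where reducible forms accumulate; the trichotomy of $C_2(M)$ according to the residue of $M$ modulo $3$ should reflect the distinct valuation patterns that the coefficients of a cusp-form can realise when the discriminant has valuation exactly $M$.

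To pass from orbit counts to field counts, I would apply a maximality sieve. Since maximality at a prime $P$ of $\mathbb{F}_q[T]$ is a local condition, Möbius inversion expresses the number of maximal irreducible orbits as an alternating sum, over finite sets of primes $S$, of orbit counts restricted to forms non-maximal at every $P \in S$, weighted by local densities; these densities combine into an Euler product that, together with the orbital constants of the previous step, yields the leading numerical coefficients appearing in \eqref{countallflds}. The main obstacle will be to truncate this sieve at a suitable cutoff, roughly $|P| \leq Y^{1/3}$, and uniformly bound the tail. Following the strategy of Belabas--Bhargava--Pomerance, as refined in \cite{BST,BTT} via the parametrisation of overorders by auxiliary quadratic data, the tail analysis should go through cleanly in the function field setting, where the relevant local counts are polynomial in $|P|$. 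Balancing the truncation of the sieve against the exact orbit count should then yield the claimed error of $\mathcal{O}(Y^{2/3+\epsilon})$.
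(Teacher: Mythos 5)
Your overall architecture (Levi--Delone--Faddeev correspondence, an exact geometry-of-numbers count of $\GLtwo(\fq[T])$-orbits over an explicit fundamental domain at $P_\infty$, then a maximality sieve truncated near $|F|\le Y^{1/3}$) matches the paper's, but the step that actually produces the error $\mathcal{O}(Y^{2/3+\epsilon})$ is missing. Truncating the inclusion--exclusion and bounding the tail by $N(W_F;X)\ll X|F|^{-2+\epsilon}$ only controls the moduli with $|F|>Y^{\delta}$; the real difficulty is evaluating the individual terms $N(W_F;X)$ for the (exponentially many) small $F$ with errors that remain summable at the level $X^{2/3}$. A direct BST-style count of each $W_F$ gives per-modulus errors that, summed over $F\le Y^{\delta}$, land you at an exponent no better than the original Davenport--Heilbronn/BST range; the improvement to $2/3$ in \cite{BTT}, and in this paper, comes from expanding the nonmaximality weight $\omega_{F}$ (the count of roots mod $F$, arising from the subring/overring parametrisation) in characters, using the exact evaluation of $\widehat{\omega}_P$ from \cite{TTOrb} to reduce to dual forms $y$ that are zero or have a triple root mod $P$, isolating the main and secondary terms from the characters $y=(0,0,0,y_4)$ (equivalently, forms whose last coordinate is divisible by the modulus), and bounding the remaining characters via a count of orbits with $r^2\mid \mathrm{Disc}(y)$ (the function-field analogue of \cite[Prop.~4.5]{BTT}). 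Your appeal to ``the parametrisation of overorders by auxiliary quadratic data'' does not substitute for this Fourier-analytic input, so the claim that balancing the truncation yields $2/3$ is currently unsupported.

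Two further points where your plan diverges from what actually happens. First, the secondary term $Y^{5/6}$ does not come from the cusp where reducible forms accumulate: in this setting the reducible (last-coordinate-zero) contribution is of order $X$ and is discarded when passing to irreducible forms, while the $X^{5/6}$ term arises as the correction between the sum over the nonzero leading/last coefficient in the slicing and the corresponding volume integral (the integral $I_1^\sigma$, an $|u|^{-2/3}$-type term), and this correction must also be tracked through every sieve modulus $F$, where it produces a coefficient that is a non-multiplicative sum depending on degrees modulo $3$. Second, to obtain the stated $C_2(M)$ you must separate the count according to the \'etale cubic algebra $\sigma$ of the completion at $P_\infty$, account for $\#\mathrm{Aut}(\sigma)$, and shift between the $R$-semilocal and global discriminants ($X=q^{M}$, $q^{M-1}$, or $q^{M-2}$ for unramified, partially ramified, totally ramified $\sigma$) before summing over $\sigma$; without this bookkeeping the trichotomy in $M\bmod 3$ cannot be recovered.
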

We remark that a cubic extension of $\fq(T)$ is either Galois, or non-Galois with its Galois closure having Galois group $S_3$. The contribution from Galois fields is $\ll Y^{1/2+\epsilon}$, see \cite{WrightC3}, and hence \eqref{countallflds} is also an asymptotic formula for the counting function of non-Galois cubic fields.

Our second main result refines the counting function by also letting us specify the splitting type at finitely many primes. When we control the splitting type at a single prime, the theorem takes the form below;  see Theorem \ref{allfieldSplitCondThm} for the general statement.
\begin{theorem}\label{corollaryforonelev}
   Let $\mathcal{F}_{P,S}(Y)$ denote the set of cubic function field extensions of $\fq(T)$ of discriminant $Y$, where the prime polynomial $P\in \fq[T]$ splits according to the splitting type $S$. Then, we have that
   \begin{equation}\label{corspliteq}
        \#\mathcal{F}_{P,S}(Y) = C_{1,P,S}Y + C_{2,P,S}Y^{5/6} + \mathcal{O}\left(Y^{2/3+\epsilon} \abs{P}^{2/3}\right),
    \end{equation}
    for certain constants $C_{i,P,S}$. Specifically, 
    \begin{equation}\label{firstsplitcoeffeq}
        C_{1,P,S} = \frac{(q^2-1)(q^3-1)}{q^4(q-1)}\left(1+\abs{P}^{-1}+\abs{P}^{-2}\right)^{-1}c_S,
    \end{equation}
    where $c_{S} = 1/6, 1/2, 1/3, 1/\abs{P}, 1/\abs{P}^2$ depending on if $S= (111), (21), (3), (1^21)$ or $(1^3)$ respectively. The constant $C_{2,P,S}\ll 1$ is given in Theorem \ref{allfieldSplitCondThm}.
\end{theorem}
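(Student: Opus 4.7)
The plan is to run the geometry-of-numbers counting scheme used to prove Theorem~\ref{allfieldthm}, but now in a sublattice (or union of cosets) of the space of integral binary cubic forms, cut out by congruence conditions at the prime polynomial $P$. The key input is the Delone--Faddeev correspondence over $\fq[T]$, bijecting isomorphism classes of cubic $\fq[T]$-orders with $\GLtwo(\fq[T])$-orbits of integral binary cubic forms. Under this bijection, the splitting type of $P$ in the maximal order of $L$ coincides with the factorisation type modulo $P$ of the associated form $f$: the types $(111)$, $(21)$, $(3)$, $(1^21)$, $(1^3)$ match with $f\bmod P$ being, respectively, a product of three distinct linear factors, a linear times an irreducible quadratic, an irreducible cubic, a linear times a squared linear, or a cube.

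First I would enumerate, for each splitting type $S$, the set $\Sigma_S$ of residues of binary cubic forms over $\fq[T]/P$ with the prescribed factorisation modulo $P$. A direct count of the orbits of $\GLtwo(\fq[T]/P)$ on $\Sigma_S$, weighted by the reciprocal of the stabiliser size (as is natural, since distinct $\GLtwo(\fq[T])$-orbits of forms can become identified modulo $P$), reproduces the constants $c_S = 1/6,\,1/2,\,1/3,\,\abs{P}^{-1},\,\abs{P}^{-2}$ in \eqref{firstsplitcoeffeq}. The Euler factor $\big(1+\abs{P}^{-1}+\abs{P}^{-2}\big)^{-1}$ appears separately from the maximality sieve at $P$, which is run in parallel with the splitting condition.

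Next I would express $\#\mathcal{F}_{P,S}(Y)$ as a linear combination of orbit counts for integral binary cubic forms of discriminant $Y$ whose reduction mod $P$ lies in $\Sigma_S$, with inclusion--exclusion terms enforcing maximality at all primes other than $P$. In the ramified cases $(1^21)$ and $(1^3)$ this requires a further refinement modulo $P^2$, using the explicit description of when a form with those reduction types arises from a maximal order. Plugging these conditions into the exact orbit-counting formula that underlies Theorem~\ref{allfieldthm} and extracting leading-order terms yields the main and secondary terms of \eqref{corspliteq}. Since the residue of the relevant Shintani-type object at $5/6$ depends on the congruence class of $M$ modulo $3$, the constant $C_{2,P,S}$ will have an analogous three-case form, but will remain uniformly bounded, hence $\ll 1$.

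The main obstacle will be controlling the error term uniformly in $P$. The bound $\mathcal{O}\big(Y^{2/3+\epsilon}\big)$ in Theorem~\ref{allfieldthm} comes from estimating the cuspidal contribution in the fundamental domain for the $\GLtwo(\fq[T])$-action, and restricting to the sublattice $\Lambda_{P,S}$ of forms with a prescribed residue pattern, whose covolume in the space of forms grows like $\abs{P}^{4}$, naively inflates this error to $Y^{2/3+\epsilon}\abs{P}^{2}$ or worse. Obtaining the sharper exponent $\abs{P}^{2/3}$ claimed in \eqref{corspliteq} requires showing that, in the congruence-restricted cusp count, one has a non-trivial saving coming from the essentially one-dimensional structure of the cuspidal region, paralleling the analogous uniformity step established in \cite{BTT} over $\mathbb{Q}$. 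This is where the bulk of the technical work in the proof would lie.
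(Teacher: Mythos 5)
Your setup is the right one --- Levi--Delone--Faddeev plus the maximality sieve, with the splitting condition at $P$ read off from the factorisation type of the form modulo $P$ --- and your identification of the main-term constants is essentially correct. But the proposal has a genuine gap at exactly the point you flag: you do not supply the mechanism that produces the error term $\mathcal{O}\big(Y^{2/3+\epsilon}\abs{P}^{2/3}\big)$, and the mechanism you gesture at (a saving coming from the ``essentially one-dimensional structure of the cuspidal region'') is not where the saving actually comes from. Counting lattice points in the union of cosets cut out by $\Sigma_S$ modulo $P$ (and modulo $P^2$ in the ramified cases) loses powers of $\abs{P}$ in every box of the thickened fundamental domain, not just in the cusp, so no refinement of the cusp analysis alone recovers uniformity in $P$. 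What is needed --- and what the paper does in Section \ref{splitcondchapt}, following \cite{BTT} --- is finite Fourier analysis: one expands the indicator functions of the splitting types (and the nonmaximality weights $\omega_{fg}$, $\Tilde{\omega}_b$) in characters modulo $P$, and uses the exact evaluations of their Fourier transforms from \cite{TTOrb} (originally \cite{Mori}), cf.\ \eqref{rootFouriereval} and \eqref{singtransf}. These transforms are supported, up to terms of size $\abs{P}^{-2}$ and $\abs{P}^{-3}$, on dual forms $y$ that are zero or highly singular (triple root) modulo $P$; the main and secondary terms then come from $y$ of the shape $(0,0,0,y_4)$ and $(0,0,y_3,y_4)$, while the remaining dual forms are controlled by degenerate-form counts and by the bound on the number of orbits with $P^2\mid\mathrm{Disc}(y)$ (the function-field analogue of \cite[Proposition 4.5]{BTT}). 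It is this duality that converts the naive loss of $\abs{P}^{2}$ or worse into the stated $\abs{P}^{2/3}$, after optimising the sieve cutoff $\delta=2/3$.

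A secondary shortfall: you assert that $C_{2,P,S}$ will ``have an analogous three-case form'' and be $\ll 1$, but extracting it uniformly in $P$ again requires the same Fourier decomposition, since the secondary term arises from nontrivial characters with $y=(0,0,y_3,y_4)$ and is assembled through the non-multiplicative $\otimes$-structure of Theorem \ref{allfieldSplitCondThm}; it is not obtained by simply rerunning the Theorem \ref{allfieldthm} computation inside a congruence-restricted lattice. Your treatment of the ramified types via a refinement modulo $P^2$ is in the right spirit (the paper handles $(1^21)$ indirectly, via complements and the subring/overring sieve with the sets $V^2_{b,\beta}$), but as written the proposal defers the entire uniformity-in-$P$ analysis, which is the heart of the theorem.
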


In addition to bounding the error terms from above, we also employ the methods from \cite{CFLS} to obtain an omega-result, bounding the error from below. As in \cite{CFLS}, we use the Riemann Hypothesis for Dedekind zeta functions, but as the Riemann Hypothesis is a theorem over function fields, our result is completely unconditional. More precisely, we prove the following theorem.
\begin{theorem}\label{omegaresultthm}
    Suppose that
    \begin{equation}\label{omegarel}
        \#\mathcal{F}_{P,S}(Y) = C_{1,P,S}Y + C_{2,P,S}Y^{5/6} + \mathcal{O}\left(Y^{\theta+\epsilon} \abs{P}^\omega\right)
    \end{equation}
    holds for some $0\leq \theta < 5/6$ and $\omega \geq 0$, for all splitting types $S$, with $C_{i,P,S}$ as in Theorem \ref{allfieldSplitCondThm}. Then, $\omega+\theta \geq 1/2$.
\end{theorem}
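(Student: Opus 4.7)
The plan is to adapt the method of Cho, Fiorilli, Lee and Södergren \cite{CFLS} to the function field setting; since the Riemann Hypothesis for Artin $L$-functions of cubic function fields is Weil's theorem, the resulting bound will be unconditional. The argument studies low-lying zeros of a natural family of Artin $L$-functions via the one-level density. To each non-Galois cubic $L\in\mathcal{F}(Y)$ we associate the two-dimensional Artin $L$-function $L(s,\rho_L) = \zeta_L(s)/\zeta_{\fq(T)}(s)$, where $\rho_L$ is the standard irreducible 2-dimensional representation of $\mathrm{Gal}(\tilde L/\fq(T))\simeq S_3$. By Riemann--Hurwitz, this is a polynomial in $q^{-s}$ of degree $M-4$ with zeros on the critical line, which we write as $q^{-1/2}e^{i\theta_{L,j}}$.

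For an even Schwartz test function $\phi$ with $\hat\phi$ supported in $[-\eta,\eta]$, we consider the averaged one-level density
\begin{equation*}
\langle D(\phi)\rangle_Y = \frac{1}{\#\mathcal{F}(Y)}\sum_{L\in\mathcal{F}(Y)}\sum_j\phi\!\left(\frac{M\theta_{L,j}}{2\pi}\right),
\end{equation*}
and evaluate it in two ways. The \emph{analytic} side uses the polynomial structure of each $L(s,\rho_L)$ and Katz--Sarnak equidistribution; since $\rho_L$ is orthogonally self-dual (Schur indicator $+1$), the symmetry type is symplectic and $\langle D(\phi)\rangle_Y \to \hat\phi(0) - \phi(0)/2$ as $Y\to\infty$, for $\hat\phi$ supported in $[-1,1]$. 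The \emph{arithmetic} side uses the explicit formula, which converts the inner sum into a finite sum over irreducible polynomials $P\in\fq[T]$ with summands $\hat\phi(k\deg P/M)\cdot\mathrm{tr}\,\rho_L(\mathrm{Frob}_P^k)/|P|^{k/2}$. Averaging and using the $S_3$ character table reduces these trace sums to linear combinations of splitting counts, e.g.
\begin{equation*}
\sum_{L\in\mathcal{F}(Y)}\mathrm{tr}\,\rho_L(\mathrm{Frob}_P) = 2\#\mathcal{F}_{P,(111)}(Y) - \#\mathcal{F}_{P,(3)}(Y) + \#\mathcal{F}_{P,(1^21)}(Y),
\end{equation*}
with analogous identities at $k=2$.

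Substituting the asymptotic \eqref{omegarel} into the arithmetic side, the $\hat\phi(0)$ terms cancel with the analytic side, the $k=2$ main contribution supplies the symplectic correction $-\phi(0)/2$, and the remaining $k=1$ contributions from the main, secondary and error pieces of the splitting counts must balance to $o(1)$. If $\omega+\theta < 1/2$, then for a judicious choice of $\phi$ with $\hat\phi$ supported in $[-\eta,\eta]$ for $\eta$ just above $1/3$, the $Y^{5/6}$ secondary piece contributes a term of order $Y^{\eta/2-1/6}$ growing with $Y$, while the error contribution $O(Y^{\theta+\epsilon}|P|^{\omega})$, summed via the prime polynomial theorem, can be shown to be strictly smaller. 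This forces a net growing contribution on the arithmetic side that cannot be balanced against the bounded analytic side, yielding the desired contradiction and so $\omega+\theta \geq 1/2$.

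The main obstacle I anticipate is the detailed analysis of the $k=1$ secondary contribution. Specifically, one must verify that the linear combination $2C_{2,P,(111)} - C_{2,P,(3)} + C_{2,P,(1^21)}$ of secondary coefficients from Theorem \ref{allfieldSplitCondThm} is nonzero and of the expected size, generically in $|P|$, so that it actually produces a growing term; this requires unpacking the proof of Theorem \ref{allfieldSplitCondThm} and comparing with the secondary term coefficient $C_2(M)$ of Theorem \ref{allfieldthm}. A secondary technical point is the rigorous identification of the symplectic symmetry type, which in the function field context amounts to a monodromy computation of Katz--Sarnak type; the remaining bookkeeping of the ramified Euler factors and the $k\geq 3$ contributions is routine.
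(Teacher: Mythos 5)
Your overall strategy is the same as the paper's (one-level density of the family $\zeta_L/\zeta_K$, explicit formula, the combination $2\#\mathcal{F}_{P,(111)}-\#\mathcal{F}_{P,(3)}+\#\mathcal{F}_{P,(1^21)}$, substitution of \eqref{omegarel}, contradiction for restricted Fourier support), but there is a genuine quantitative gap exactly at the step you flag as "the main obstacle", and it is not a routine verification: your claimed growth rate $Y^{\eta/2-1/6}$ for support $\eta$ just above $1/3$ implicitly assumes that the secondary-coefficient combination $2C_{2,P,(111)}-C_{2,P,(3)}+C_{2,P,(1^21)}$ has size $\asymp 1$ in $\abs{P}$. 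It does not: unpacking Theorem \ref{allfieldSplitCondThm} (this is the inequality \eqref{splitpreponelevcor}, proved at the end of Section \ref{splitcondchapt}) shows the combination is $\leq -D\abs{P}^{-1/3}+\mathcal{O}(\abs{P}^{-1})$, i.e. of size $\abs{P}^{-1/3}$ with a definite sign. With the correct size, the secondary contribution to the averaged one-level density at frequency $n$ is $\asymp Y^{-1/6}q^{n/6}$, so after summing up to $n\leq \eta N_L$ it is of order $Y^{(\eta-1)/6}$; for $\eta$ just above $1/3$ this \emph{decays}, your test function produces no contradiction, and the argument collapses. One is forced to take $\eta>1$, and only then does the competition with the error contribution $Y^{\theta-1+\eta(\omega+1/2)+\epsilon}$ (which must have negative exponent for some admissible $\eta>1$) produce precisely the condition $(1-\theta)/(\omega+1/2)>1$, i.e. the threshold $\omega+\theta\geq 1/2$ of the theorem; with your numerics the threshold would come out wrong.

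A related structural consequence: once the support must exceed $1$, you cannot set up the contradiction against a Katz--Sarnak equidistribution statement (which you propose for support in $[-1,1]$, and which in any case is not an off-the-shelf input here). The paper instead compares the blowing-up arithmetic side against the trivial bound $D_L(\psi)\ll N_L\ll \log_q Y$, which holds for any fixed Schwartz $\psi$ regardless of support; the symplectic term $-\psi(0)/2$ is derived directly from the main terms of the counts via \eqref{onelevtermsplit} and the polynomial prime number theorem, not from monodromy. So the missing ingredients are: (i) the one-sided bound $2C_{2,P,(111)}-C_{2,P,(3)}+C_{2,P,(1^21)}\leq -D\abs{P}^{-1/3}+\mathcal{O}(\abs{P}^{-1})$, which requires the explicit structure of the secondary coefficients (including their dependence on $M$ modulo $3$) and is the technical heart pinning the exponent $1/2$; and (ii) replacing the equidistribution input by the trivial bound so that support $\eta>1$ is usable.
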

We remark that over $\mathbb{Q}$, there is some numerical evidence suggesting that the relation \eqref{omegarel} holds with $\theta = 1/2$ and $\omega = 0$, see \cite[Appendix A]{CFLS}. This would make Theorem \ref{omegaresultthm} sharp, while the error term in Theorem \ref{corollaryforonelev} would be quite far from being sharp.

In addition to bounding the error term in \eqref{corspliteq} from above and below, our methods also provide another perspective on the secondary term in these counting functions. We will see that the appearance of these terms can be explained by the Fourier transforms of certain functions defined on $\big(\mathbb{F}_{q^n}\big)^4$. Specifically, the main term comes from evaluating these Fourier transforms at zero, while the secondary term comes from values of these Fourier transforms at nonzero points that are, in a sense, highly singular, see e.g. \eqref{sectermfour}.

\subsection{Outline}
We begin, in Section \ref{prelch}, by recalling algebraic notions concerning function fields as well as results about completions of function fields. Then, in Section \ref{ringch}, we specialise to the setting of cubic function fields. Here, we present several results allowing us to count cubic rings, instead of cubic fields. Isomorphism classes of these cubic rings are in bijection with orbits, under a certain group action, of binary cubic forms, through the Levi--Delone--Faddeev correspondence. This allows us to recast the problem of counting fields as a lattice point counting problem.

In Section \ref{latticech} we obtain our first main results. We use geometry of numbers methods to count the number of orbits of integral binary cubic forms having discriminant equal to some $q$-power $q^\ell := X$. In fact, we are able to obtain an exact formula for the number of orbits of such forms in Theorem \ref{explicitformcount}. The proof is based on the method of "thickening the cusp" from \cite{BST} together with an explicit description of a fundamental domain of the function field analogue of $\GLtwo(\mathbb{Z}) \setminus \GLtwo(\mathbb{R})$. Furthermore, we extend the slicing method from \cite[Section 6]{BST}, which is what allows us to count reducible forms. Specifically, when necessary, we slice over two of the coefficients of these reducible binary cubic forms.

Over $\mathbb{R}$, when counting integral forms, one separates the forms with positive discriminant from those with negative discriminant, see e.g. \cite[Theorem 5]{BST}. This is equivalent to separating the forms based on whether the corresponding ring, when viewed as a cubic ring over $\mathbb{R}$, is isomorphic to $\mathbb{R}^3$, or to $\mathbb{R}\oplus \mathbb{C}$. In $\fq(T)$, one can choose a distinguished prime $P_\infty$, and a corresponding completion $K_\infty$ of $\fq(T)$. Hence, over $\fq(T)$, we instead separate the forms depending on which local cubic extension of $K_\infty$, say $\sigma$, the corresponding cubic rings over $K_\infty$ are isomorphic to. Let $V(\fq[T])$ denote the space of integral binary cubic forms. We then prove the following exact formula for $ N(V(\fq[T])^\sigma; X)$, i.e. the number of integral orbits under the standard group action with discriminant equal to $X$, corresponding to $\sigma$ over $K_\infty$.
\begin{theorem}\label{explicitformcount}
        For $X = q^\ell $ larger than some absolute constant, depending only on $q$, we have that
    \begin{equation}\label{explformcounteq}
        N(V(\fq[T])^\sigma; X) =   \frac{q}{(q-1)\#\mathrm{Aut}(\sigma)}X-\frac{C_2(\ell)}{\#\mathrm{Aut}(\sigma)(q-1)}X^{5/6}+\frac{q(q+1)I_0^\sigma}{(q-1)\vol(B)\#\mathrm{Aut}(\sigma)}X,
    \end{equation}
    where $C_2(\ell)$ is given explicitly in Proposition \ref{secevalprop}, $I_0^\sigma$ is defined in \eqref{redtermint} and $B$ is defined above \eqref{Babove}. Furthermore, we obtain an asymptotic formula for the number of irreducible forms, $N(V(\fq[T])^{\mathrm{irr },\sigma}; X)$, by replacing the last term\footnote{This essentially means that the third term in \eqref{explformcounteq} corresponds to the reducible binary cubic forms.} above with $\mathcal{O}(X^{3/4+\epsilon})$.
\end{theorem}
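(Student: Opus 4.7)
The plan is to reduce the theorem to a lattice-point count in a fundamental domain, essentially following the strategy of \cite{BST} but exploiting the non-archimedean structure of $K_\infty$. By the Levi--Delone--Faddeev correspondence (to which the excerpt has already alluded), integral $\GLtwo(\fq[T])$-orbits on $V(\fq[T])$ biject, up to automorphism factors, with lattice points in a fundamental domain $\mathcal F$ for $\GLtwo(\fq[T])\setminus\GLtwo(K_\infty)$ applied to a distinguished form $v_\sigma$ of local type $\sigma$ at $P_\infty$. The first step is to write down $\mathcal F$ explicitly via an Iwasawa-type decomposition over $K_\infty$, separating it into a bounded ``main body'' and a ``cusp'' in the standard way. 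Restricting to discriminant exactly $X = q^\ell$ then cuts this domain by a codimension-one condition that, crucially, is discrete in the function field setting.

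Next, I would split the count into irreducible and reducible contributions. For the irreducible part, I would apply the averaging trick of \cite{BST}: instead of counting lattice points directly in $\mathcal F$, count them in $g\cdot\mathcal F$ for $g$ varying over an appropriate bounded ball $B\subset \GLtwo(K_\infty)$ and divide by $\vol(B)$. The main body of $\mathcal F\cdot B$ is a ``nice'' region of volume proportional to $X$, and the Poisson/volume-principle estimate converts this into the main term $\frac{q}{(q-1)\#\mathrm{Aut}(\sigma)}X$. The cusp region, thickened by $B$, contributes both the error $\mathcal O(X^{3/4+\epsilon})$ for irreducible forms and, after extracting the reducible contribution, an exact secondary piece of size $X^{5/6}$. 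The sign and constant of $C_2(\ell)$ emerge from evaluating the discrete cusp volume: the $X^{5/6}$ threshold in the $y$-height of the cusp rounds to the nearest $q$-power $q^{\lceil 5\ell/6\rceil}$, and because $5\ell/6$ lies at a rational with denominator $6$, its ceiling depends only on $\ell\nsmod{3}$, which is precisely where the three cases in the definition of $C_2(\ell)$ come from.

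For the reducible part, I would extend the slicing method of \cite[Section 6]{BST}. Since a reducible form has a rational linear factor, one may conjugate by $\GLtwo(\fq[T])$ so that the form has vanishing leading coefficient; then slice over the remaining ``linear'' coefficients (two of them, rather than one as in \cite{BST}) to obtain a closed formula. Because $\fq[T]$ is a principal ideal domain with a good Euclidean structure and the sliced variable ranges over a finite-dimensional $\fq$-vector space of explicit dimension, each slice is evaluated exactly, producing the displayed third term $\frac{q(q+1)I_0^\sigma}{(q-1)\vol(B)\#\mathrm{Aut}(\sigma)}X$, with $I_0^\sigma$ given by the natural local integral at $P_\infty$ corresponding to the reducible locus.

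The main obstacle, I expect, is not the derivation of the volume main term but rather converting the usual asymptotic error bounds into exact identities. Over $\mathbb{R}$ the cusp contribution is recovered only up to an error of the same order as the secondary term; over $\fq(T)$, the absolute value on $K_\infty$ takes values in $q^{\mathbb{Z}}$, so the indicator functions cutting out the discriminant-$X$ locus and the cusp boundary are step functions on a common lattice, and the ``continuous tail'' vanishes identically once $\ell$ is large enough. Making this rigorous requires a careful uniform case analysis across the local types $\sigma$, and correctly identifying for which $\ell$ the transition happens, which is why the theorem only asserts the exact formula once $X$ exceeds a $q$-dependent constant. This discreteness is also what allows the reducible count to appear as an honest third term rather than being absorbed into an error.
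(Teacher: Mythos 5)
Your overall architecture matches the paper's: an explicit Iwasawa-type fundamental domain (Proposition \ref{fundprop}), averaging over a compact open set to thicken the cusp, the exact non-archimedean lattice-point count (Lemma \ref{latticeptlemma}), and a two-variable extension of the slicing method for the locus with vanishing last coordinate, with the reducible forms having nonzero last coordinate pushed into $\mathcal{O}(X^{3/4+\epsilon})$. So the skeleton is right. But your account of where the secondary term and the mod-$3$ structure of $C_2(\ell)$ come from is not correct, and this is the heart of the theorem. You attribute the $X^{5/6}$ term to ``the discrete cusp volume'' with a threshold $q^{\lceil 5\ell/6\rceil}$ whose ceiling ``depends only on $\ell \nsmod 3$.'' First, $\lceil 5\ell/6\rceil$ depends on $\ell$ modulo $6$, not modulo $3$, so the heuristic is internally inconsistent. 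Second, in the paper the cusp-like slice (last coordinate zero) produces the \emph{third} term, of size $X$, not the secondary term. The secondary term arises in the slices over \emph{nonzero} last coefficient $d$: after the change of variables $u = dt^3/\lambda_0$, the sum $\sum_{d}\eta(\abs{u\lambda_0/d}^{1/3})\abs{d}^{-1/3}$ is evaluated exactly as a geometric series, and its value is a main piece proportional to $\abs{u\lambda_0}^{2/3}$ minus a constant deficit $q^{2\epsilon/3}/((q-1)(q^2-1))$; integrating this deficit yields the $X^{5/6}$ term $I_1^\sigma$ (see \eqref{sectermint}). The mod-$3$ dependence is forced because the valuation of $t^3$ is divisible by $3$, so the substitution requires splitting the sum over $d$ by $\deg(d)$ modulo $3$, producing the factors $q^{2\epsilon(u\lambda_0)/3}$; it has nothing to do with rounding a cusp height.

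Because of this misidentification, your proposal cannot deliver the explicit values of $C_2(\ell)$ (Proposition \ref{secevalprop}), which are part of the statement: obtaining them requires choosing representatives $v_\sigma$ for each local type, showing $I_1^\sigma$ depends only on $\ell$ modulo $3$ (not on the auxiliary choice made modulo $4$), reducing the integral to one over the compact group $K$ by an invariance argument, and evaluating it case by case for the types $(111),(21),(3),(1^21),(1^3)$. Two further points you gloss over are needed for exactness: the tail integrals from the nonzero-$d$ slices must be shown to cancel exactly against the correction term from the $(c,0)$-slices (the $\vol(B_0)$ terms in \eqref{redintcontr} and \eqref{nonzerocontr}), which is what makes the formula an identity rather than an asymptotic for $X$ large; and the $\mathcal{O}(X^{3/4+\epsilon})$ claim for irreducible forms needs the function field analogue of \cite[Lemma 21]{BST} bounding reducible forms with nonzero last coordinate in the fundamental domain, not merely a generic ``cusp error'' assertion. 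Your reducible-locus treatment (conjugating each reducible form to have a vanishing coefficient) also conflates counting orbits with counting points in the thickened domain; the exact third term is the contribution of integral points with last coordinate zero, weighted through the averaging set, which is why it involves $I_0^\sigma/\vol(B)$ rather than a direct orbit count.
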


We finally start counting cubic fields in Section \ref{fieldchapter}, where we employ the discriminant-reducing sieve from \cite[Section 9]{BST}. More precisely, we first use the inclusion-exclusion principle to reduce the problem of finding maximal forms to that of finding forms which are nonmaximal at some squarefree polynomial $F$. The number of such forms can then be computed using the discriminant-reducing sieve, which further reduces the problem to counting elements of $V(\fq[T])$ with weight $\omega_F(x)$, the number of distinct roots of the form $x$ modulo $F$.

Next, as in \cite{BST}, we use the inverse Fourier transform to study this counting problem. From \cite{TTOrb}, we know that when $F=P$, the Fourier transform of $\omega_P$ evaluated at some binary cubic form $y$ modulo $P$ is given explicitly by
\begin{equation}\label{sectermfour}
    \widehat{\omega}_P(y)  = \begin{cases}
        1 + \abs{P}^{-1}, &\text{ if $y = 0$ modulo $P$},\\
        \abs{P}^{-1}, &\text{ if $y \neq 0$ has a triple root modulo $P$,}\\ 
        0, &\text{ otherwise}.
    \end{cases}
\end{equation}
Using this result, combined with Möbius inversion, we can essentially reduce our counting problem to studying binary cubic forms whose last coordinate is divisible by $P$. After estimating the number of such forms, we arrive at Theorem \ref{allfieldthm}. We remark that it seems very difficult to improve the error term in this theorem. For example, in \eqref{nondegencontrib} a single term from the sum over $y$ already results in a contribution of size roughly $X^{2/3}$ to the counting function of cubic fields.

In section \ref{onelevch}, we introduce the one-level density, which is a tool for studying the distribution of the zeros close to the real axis, of a family of Artin $L$-functions associated with cubic function fields. Then, following \cite{CFLS}, we use this one-level density to bound the error term, in the counting function for cubic function fields with splitting conditions, from below. Specifically, we prove that in case \eqref{omegarel} holds with $\theta+\omega < 1/2$, then the secondary term in the counting function $\#\mathcal{F}_{P,S}(Y)$ contributes to the one-level density in a way that makes it grow faster than a power of $Y$. However, by using the Riemann Hypothesis, one sees that the one-level density is bounded by a constant multiple of $\log_q(Y)$. This contradiction thus yields Theorem \ref{omegaresultthm}.

Finally, in Section \ref{splitcondchapt} we find an asymptotic formula for the number of cubic function fields with finitely many splitting conditions. This is done by extending the sieve from Section \ref{fieldchapter} to handle splitting conditions. In particular, we use the sieve for counting nowhere ramified extensions from \cite[Section 9]{BST}. Moreover, we utilise \cite[Theorem 11]{TTOrb}, originally proven in \cite{Mori}, where exact formulas for the Fourier transforms of the indicator functions of the various splitting types modulo $P$ are provided. Using these exact formulas, one can proceed with the estimation similarly to Section \ref{fieldchapter}; however, the process of bounding the error term is more involved.

Compared to the results over $\mathbb{Q}$, the secondary term becomes slightly more complicated, because of the dependence on the congruence class of the exponent $M$ modulo three. In particular, we cannot write down a simple product for the secondary term. Instead, the secondary term is equal to a certain sum, which doesn't split multiplicatively. Our computations for fields with splitting conditions result in Theorem \ref{allfieldSplitCondThm}, with an error term of the same size as the analogous result over $\mathbb{Q}$, from \cite{BTT}.

\subsection{Relation to the work of Zhao and to the work of Kural}\label{relationch}
In an earlier version of this paper, we claimed that Zhao had proven the existence of the secondary term in $\#\mathcal{F}(Y)$. However, we were recently made aware that Zhao's work contains a gap. In fact, recently and independently of our work, Kural \cite{Kural} closed this gap through a significant extension of Zhao's methods, obtaining an asymptotic formula for $\#\mathcal{F}(Y)$ involving a main and secondary term, together with an error term of order $\mathcal{O}(Y^{3/4+\epsilon})$. See \cite[Section 1.4]{Kural} for a description of the gap in Zhao's thesis. 
\subsection{Acknowledgements}
We want to thank Anders Södergren for suggesting this problem and for his support and careful proofreading during the writing process. Furthermore, we thank Michael Kural for correspondence regarding our works.
\subsection{Conventions}
Given some set $X$, and functions $f: X\to \mathbb{C}$ and $g: X\to \mathbb{R}_{\geq 0}$, we write $f = \mathcal{O}(g)$, if there is some constant $C$ such that $\abs{f(x)}\leq Cg(x)$ for all $x\in X$. Alternatively, we write this as $f\ll g$. Similarly, we write $f = h + \mathcal{O}(g)$ if $f-h\ll g$. Furthermore, if $f\ll g$ and $g\ll f$, then we write $f\asymp g$. The constant $C$ above is called the implied constant, and its dependence on a collection of variables $\mathcal{D}$ is indicated by writing $\mathcal{O}_\mathcal{D}$ or $\ll_\mathcal{D}$. We will let all of the implied constants depend on the variables $q$ and $\epsilon$, without indicating this. We may also somewhat erroneously refer to constants only depending on $q$ as absolute constants. 

In Section \ref{onelevch}, we will make use of the Fourier transform $\widehat{\psi}$ of a Schwartz function $\psi$. We use the convention that
\begin{equation*}
    \widehat{\psi}(u) = \int_{\mathbb{R}}\psi(x)e^{-2\pi i x u }dx.
\end{equation*}
In addition to the Fourier transform above, we will also make use of the finite Fourier transform. Specifically, if $G$ is a finite abelian group, we define the finite Fourier transform of a function $f: G\to \mathbb{C}$ by
\begin{equation*}
    \widehat{f}(\chi) = \abs{G}^{-1}\sum_{a\in G}f(a)\chi(a),
\end{equation*}
where $\chi$ is a character of $G$. We may recover $f$ through Fourier inversion, using that
\begin{equation*}
    f(a) = \sum_{\chi \in \widehat{G}}\widehat{f}(\chi) \overline{\chi}(a),
\end{equation*}
where $\widehat{G}$ denotes the character group of $G$.

\section{Preliminaries}\label{prelch}
We briefly recall some notions that will be useful in the study of cubic function fields. Readers familiar with the algebraic theory of function fields may skip this section. We refer to \cite[Ch. 5, 7-8]{Rosen} and \cite[Ch. 1-2]{Frankenhuijsen} for the details.

We fix a prime power $q$ not divisible by $2$ or $3$ and consider a finite, separable extension $L$ of $K := \fq(T)$. We allow the possibility $L=\fq(T)$. A prime in $L$ is a discrete valuation $\nu$ on $L$ such that $L$ is the fraction field of the ring $\mathcal{O}_\nu := \{x\in L: \nu(x)\geq 0\}$. Sometimes, we call the prime ideal $\mathfrak{P} = \{x\in \mathcal{O}_\nu: \nu(x) \geq 1\}$ a prime of $L$. If $S$ is a nonempty set of primes in $L$, then we define the $S$-integers, $\mathcal{O}_S := \{x\in L: \nu(x) \geq 0, \nu\notin S\}$. The degree of a prime $\mathfrak{P}$ associated with $\nu$ is defined to be the dimension of  $\mathcal{O}_\nu/\mathfrak{P}$ over the field of constants, which in our case is $\fq$.

To study the primes of a cubic field $L$, we will use the fact that the primes of $K$ are very well-understood. Indeed, a prime of $\fq(T)$ is either the valuation corresponding to a prime polynomial $P$, or the so-called prime at infinity, corresponding to the valuation $\nu_\infty$ defined on $\fq[T]$ as $\nu_\infty(f) = -\deg(f)$ and then extended additively to all of $K^*$. We often denote the prime at infinity $P_\infty$ and the associated absolute value with $\abs{\cdot}_\infty$, or simply $\abs{\cdot}$. For polynomials $f$, we have $\abs{f} = q^{\deg(f)}$. Note that a uniformiser of $P_\infty$ is $1/T$, as it has valuation equal to one. We sometimes write $\pi = \pi_\infty = 1/T$. 

The completion of $K$ with respect to $\nu_\infty$ is denoted $K_\infty$ and contains all Laurent-series of the form
\begin{equation*}
    \sum_{n=-\infty}^\infty a_n(T^{-1})^n,
\end{equation*}
with $a_n\in \fq$, and all but finitely many $a_n$ with negative index $n$ equal to zero. We write $\mathcal{O}_\infty$ for the completion of $\mathcal{O}_{\nu_\infty}$. We have that $K_\infty^* \simeq \mathcal{O}_\infty^* \times \pi^\mathbb{Z}$, so that any $x\in K_\infty^*$ has the form $u(T^{-1})^n$ for some integer $n$ and $u\in \mathcal{O}_\infty^*$. We call $u$ the unit part of $x$.

Every prime $\mathfrak{P}$ in a cubic extension $L$ of $K$ lies over a prime $P$ in $K$, in the sense that the restriction of the valuation $\nu_{\mathfrak{P}}$ to $K$ is a multiple of $\nu_P$. We define the inertial degree $f(\mathfrak{P}/P)$ of $\mathfrak{P}$ over $P$ as the dimension of $\mathcal{O}_\mathfrak{P}/\mathfrak{P}$ over $\mathcal{O}_P/P$. If $\nu_P$ is the valuation of a prime in $K$, then over $L$, $[L:K]\nu_P$ splits into a sum $\nu_S =e_1\nu_{\mathfrak{P}_1}+...+e_r\nu_{\mathfrak{P}_r}$ of valuations $\nu_{\mathfrak{P}}$ on $L$, with $\nu_S$ agreeing with $[L:K]\nu_P$ on $K$. We call $e_i =: e(\mathfrak{P}_i/P)$ the ramification degree of $\mathfrak{P}_i$ over $P$, and they satisfy the formula
\begin{equation*}
    \sum_{i=1}^r f(\mathfrak{P}_i/P)e(\mathfrak{P}_i/P) = [L:K] = 3.
\end{equation*}
If $e(\mathfrak{P}_i/P) = [L:K]$ we say that $P$ is totally ramified and if all $e(\mathfrak{P}_i/P) = 1$ we say that $P$ is unramified.

For primes in $K$ obtained by localising $R$ at a prime polynomial $P\in R$, we can describe this splitting behaviour in another way. If $\mathfrak{P}_1,...,\mathfrak{P}_r$ are the primes lying over (the prime obtained from) $P$ with ramification degrees $e_1,...,e_r$, then if we let $\mathcal{O}_L$ be the integral closure of $R$ in $L$, we have that
\begin{equation*}
    P\mathcal{O}_L = \mathcal{P}_1^{e_1}...\mathcal{P}_r^{e_r},
\end{equation*}
where $\mathcal{P}_i = \mathfrak{P}_i \cap \mathcal{O}_L$.

We now briefly discuss the discriminant of a cubic function field $L$. Let $R=\fq[T] = \mathcal{O}_S$, with $S= \{\nu_\infty\}$. We can then define the $R$-semilocal discriminant by considering the integral closure $\mathcal{O}_L$ of $R$ in $K$. Letting $\alpha_1, \alpha_2,\alpha_3$ be an $R$-basis of $\mathcal{O}_L$, we define the (absolute) semilocal discriminant as 
\begin{equation*}
    \abs{\det(\Tr_{L/K}(\alpha_i\alpha_j)_{i,j})}_\infty.
\end{equation*}
If $P_\infty$ is unramified in $L$, this agrees with the (absolute) global discriminant. Otherwise, in case $P_\infty$ is ramified, the global discriminant is the $R$-semilocal discriminant multiplied by $q^a$, with $a=1$, unless $P_\infty$ is totally ramified, in which case $a=2$. See \cite[Lemma 7.10, Cor. 7.2]{Rosen} for a proof.

 The extension $L$ is said to be geometric if its field of constants is equal to $\fq$. As $L$ is of prime degree, the only non-geometric extension is $L= \mathbb{F}_{q^3}(T)$, and we may therefore assume that all extensions are geometric, for the purpose of counting them. Evidently, the norm of the discriminant is a nonnegative integral power of $q$, and the Riemann--Hurwitz formula implies that all such norms are in fact even integer powers of $q$. 

Our tools for counting the number of cubic field extensions will require us to choose a distinguished prime. Over $\mathbb{Q}$, this prime is naturally taken to be the Archimedean prime, but in the function field case, we instead use $P_\infty$ for this purpose. We remark that $P_\infty$ may be replaced by any prime of degree one in $K$ without changing our arguments.

With $P_\infty$ as our distinguished prime, the local field $K_\infty$ is given the analogous role of $\mathbb{R}$ in the number field setting. However, as the completion of the global field $K$ at a non-Archimedean prime, $K_\infty$ enjoys several useful properties. The most important one for our purposes is Hensel's lemma, see \cite[II.4.6]{Neukirch}.
\begin{prop}[Hensel's lemma]\label{HenselLemma}
    If a monic polynomial $f(x)\in \mathcal{O}_\infty[x]$ admits a factorisation $f(x) \equiv \overline{g(x)}\,\overline{h(x)} \nsmod{P_\infty}$, with $\overline{g(x)}$ and $\overline{h(x)} \in \big(\mathcal{O}_\infty/P_\infty\big)[x]$ relatively prime, then this factorisation lifts to $\mathcal{O}_\infty[x]$. More precisely, there are $g,h\in \mathcal{O}_\infty[x]$ with $\deg(g) = \deg(\overline{g})$ and $\deg(h) = \deg(\overline{h})$ whose reductions modulo $P_\infty$ agree with $\overline{g}$ and $\overline{h}$ respectively, such that
    \begin{equation*}
        f(x) = g(x)h(x)
    \end{equation*}
    in $\mathcal{O}_\infty[x]$.
\end{prop}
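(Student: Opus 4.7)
The plan is to prove the result by Newton-style successive approximation, exploiting the completeness of $\mathcal{O}_\infty$ with respect to the $P_\infty$-adic topology. The aim is to build sequences $(g_n), (h_n)$ in $\mathcal{O}_\infty[x]$ that converge $\pi$-adically to the desired factors, where $\pi = 1/T$ is a uniformiser of $P_\infty$. First I would arbitrarily lift $\overline{g}$ and $\overline{h}$ to monic $g_0, h_0 \in \mathcal{O}_\infty[x]$ of the same degrees (both can be taken monic since $f$ is monic and $\deg \overline{g} + \deg \overline{h} = \deg f$). Invoking the Euclidean algorithm in $(\mathcal{O}_\infty/P_\infty)[x]$, the coprimality hypothesis yields $\overline{a}, \overline{b}$ with $\overline{a}\,\overline{g} + \overline{b}\,\overline{h} = 1$; lift these to $a, b \in \mathcal{O}_\infty[x]$. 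By construction, $f - g_0 h_0 \equiv 0 \pmod{\pi}$.

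I would then inductively construct $g_n, h_n$ with the properties (i) $g_n \equiv g_{n-1}$ and $h_n \equiv h_{n-1} \pmod{\pi^n}$, (ii) $g_n$ is monic of degree $\deg \overline{g}$ and $h_n$ is monic of degree $\deg \overline{h}$, and (iii) $f \equiv g_n h_n \pmod{\pi^{n+1}}$. Writing $g_{n+1} = g_n + \pi^{n+1} u_n$ and $h_{n+1} = h_n + \pi^{n+1} v_n$, expanding the product, and reducing modulo $\pi^{n+2}$, condition (iii) at stage $n+1$ becomes the congruence
\begin{equation*}
u_n h_n + v_n g_n \equiv w_n \pmod{\pi},
\end{equation*}
where $w_n := (f - g_n h_n)/\pi^{n+1}$, a polynomial of degree at most $\deg f - 1$ since both $f$ and $g_n h_n$ are monic of the same degree. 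Multiplying the Bezout relation by a lift of $\overline{w}_n$ gives an initial residue-field solution; then dividing the candidate for $\overline{u}_n$ by $\overline{g}$ and adjusting the candidate for $\overline{v}_n$ by means of the invariance $(u+sg)h + (v-sh)g = uh + vg$ produces representatives with $\deg \overline{u}_n < \deg \overline{g}$ and, by a degree count using $\deg \overline{w}_n < \deg f$, also $\deg \overline{v}_n < \deg \overline{h}$.

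Finally, the sequences $(g_n), (h_n)$ are Cauchy in the coefficient-wise $\pi$-adic topology with bounded degrees, so by completeness of $\mathcal{O}_\infty$ they converge to $g, h \in \mathcal{O}_\infty[x]$ of the required degrees, whose reductions modulo $P_\infty$ coincide with $\overline{g}$ and $\overline{h}$; property (iii) passes to the limit, giving $f = gh$. The main subtlety in this approach is the degree bookkeeping: at every stage one must simultaneously solve the linear congruence for $u_n, v_n$ and enforce the degree bounds, so that $g_{n+1}$ remains monic of the intended degree and no spurious higher-order terms appear in $h_{n+1}$. The division-with-remainder adjustment after the Bezout step is precisely what makes this possible, and it is the only nontrivial content beyond completeness of $\mathcal{O}_\infty$.
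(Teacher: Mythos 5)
The paper offers no proof of this proposition at all---it simply cites Neukirch II.4.6---and your successive-approximation argument is exactly the standard proof given in that reference (inductive correction $g_{n+1}=g_n+\pi^{n+1}u_n$, $h_{n+1}=h_n+\pi^{n+1}v_n$, with the Bezout relation plus division-with-remainder enforcing $\deg \overline{u}_n<\deg\overline{g}$, $\deg \overline{v}_n<\deg\overline{h}$), so it is correct and complete in its essentials. The only unstated point is that $\overline{g},\overline{h}$ need not themselves be monic (only the product of their leading coefficients is $1$); this is harmless, since you can normalise both to be monic, run your construction, and then rescale $g$ and $h$ by a unit of $\mathcal{O}_\infty$ lifting the leading coefficient of $\overline{g}$ so that the reductions agree with the original $\overline{g},\overline{h}$.
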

In particular, this allows us to lift roots modulo $P_\infty$ of multiplicity one to unique roots in $\mathcal{O}_\infty$. 

\section{Cubic Rings and Maximality}\label{ringch}
Rather than counting cubic fields directly, we will make use of a correspondence theorem allowing us to count certain cubic rings instead, as in \cite{BST} and \cite{BSW}. We begin with a few definitions.

First, we let $R=\fq[T]$, which is a PID. A cubic ring over $R$ is a commutative ring $A$, with unity, such that $A$ is a free module of rank $3$ over $R$. If $A$ is a cubic ring over $R$, then we will simply call $A$ a cubic ring. We say that $A$ is maximal in case it is not strictly contained in any other cubic ring.

By taking fraction fields, we have the following first result connecting cubic field extensions of $K$ with maximal cubic orders, i.e. maximal cubic rings without zero-divisors. All objects are assumed to lie in some fixed algebraic closure of $K$. 
\begin{lemma}
    Maximal cubic orders are in bijection with cubic field extensions of $K = \fq(T)$.
\end{lemma}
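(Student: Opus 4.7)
The plan is to construct the bijection explicitly via integral closure and to verify it is well-defined and inverse in both directions. In one direction, send a cubic field extension $L$ of $K$ to its ring of integers $\mathcal{O}_L$, i.e. the integral closure of $R = \fq[T]$ in $L$. In the other direction, send a maximal cubic order $A$ to its fraction field $\mathrm{Frac}(A) = A\otimes_R K$. The main content is checking that these assignments land in the correct class and compose to the identity.

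For the forward direction, given $L/K$ cubic I would argue that $\mathcal{O}_L$ is a cubic ring. Since $\mathrm{char}(\fq)\neq 2,3$, the extension $L/K$ is separable, so $\mathcal{O}_L$ is finitely generated as an $R$-module (a standard fact proved via the trace pairing). Being a subring of the field $L$, it is torsion-free, and since $R$ is a PID the structure theorem yields that $\mathcal{O}_L$ is free of rank $[L:K]=3$. It is visibly a commutative ring with unity, and has no zero-divisors, so it is a cubic order. For maximality, suppose $A'\supseteq \mathcal{O}_L$ is a cubic ring. Then $A'\otimes_R K$ is a $3$-dimensional $K$-algebra containing $L$, hence equals $L$, so $A'\subseteq L$ and every element of $A'$ is integral over $R$ (being in a finitely generated $R$-algebra); thus $A'\subseteq \mathcal{O}_L$ and equality holds.

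For the reverse direction, let $A$ be a maximal cubic order. Since $A$ is a free $R$-module of rank $3$ without zero-divisors, the localisation $L := A\otimes_R K$ is a $3$-dimensional commutative $K$-algebra that is also a domain (as a localisation of the domain $A$). Any finite domain extension of a field is itself a field, so $L$ is a cubic field extension of $K$ and $L = \mathrm{Frac}(A)$. Every element of $A$ is integral over $R$, so $A\subseteq \mathcal{O}_L$; combined with the forward direction (which shows $\mathcal{O}_L$ is itself a cubic ring containing $A$) and the maximality hypothesis on $A$, this forces $A = \mathcal{O}_L$.

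It remains to note that the two constructions are mutually inverse: starting from $L$, one has $\mathrm{Frac}(\mathcal{O}_L) = L$ because $\mathcal{O}_L\otimes_R K = L$; starting from a maximal cubic order $A$, the argument above gives $A = \mathcal{O}_{\mathrm{Frac}(A)}$. I expect no serious obstacle, since all ingredients are standard; the one point requiring mild care is the separability of $L/K$ (needed so that integral closure is finitely generated), which is guaranteed by the assumption $2,3\nmid q$ and the fact that every cubic extension is then separable.
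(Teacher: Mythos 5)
Your proof is correct and takes essentially the same route the paper has in mind (the paper only sketches it as ``by taking fraction fields''): cubic field $\mapsto$ integral closure of $R$, maximal cubic order $\mapsto$ fraction field, with separability (from $3\nmid q$) guaranteeing module-finiteness of the integral closure. One small wording fix: integrality of the elements of $A'$ follows from $A'$ being finitely generated as an $R$-\emph{module} (it is free of rank $3$ by the definition of a cubic ring), not from being a finitely generated $R$-algebra.
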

One can define the discriminant of a cubic ring $A$ as the determinant of the trace form on $A$. We denote the norm of this discriminant with $\mathrm{Disc}(R)$. When $A$ is a maximal cubic order, one sees immediately that $\mathrm{Disc}(A)$ is equal to the absolute semilocal discriminant of the corresponding cubic field extension, with respect to $S = \{P_\infty\}$. 

The lemma above implies that in order to count cubic fields (up to isomorphism), we may instead count maximal cubic orders (up to isomorphism). Next, we relate cubic rings to so-called binary cubic forms. 

A binary cubic form $f$ with coefficients in some PID $B$ is an expression $f(x,y) = ax^3+bx^2y+cxy^2+dy^3$ with $a,b,c,d\in B$. We write $V(B)$ for the space of such forms, and we often identify $V(B)$ with $B^4$. The discriminant of $f\in V(B)$ is defined in the usual way so that $\mathrm{Disc}(f) = b^2c^2-4ac^3-4b^3d-27a^2d^2+18abcd$. On $V(B)$ we can define a $\GLtwo(B)$ action by letting $(gf)(x,y) = \det(g)^{-1}f((x,y)g)$. One checks that $\mathrm{Disc}(gf) = \det(g)^2\mathrm{Disc}(f)$. For this action, we have the following result, see \cite[Section 2]{BST} and \cite[Theorem 5]{BSW}.
\begin{theorem}[The Levi--Delone--Faddeev correpondence]\label{DeloneFaddeev}
    Let $B$ be a PID. Then there is a discriminant-preserving bijection between the $\GLtwo(B)$-orbits of binary cubic forms with coefficients in $B$ and the set of isomorphism classes of cubic rings over $B$. Explicitly, the isomorphism is given by mapping $(a,b,c,d) $ to the ring $ \langle 1,\omega,\theta\rangle,$
    with multiplication laws 
    \begin{equation*}
    \begin{cases}
        &\omega\theta = -ad \\
        &\omega^2 = -ac-b\omega+a\theta,\\
        &\theta^2 = -bd-d\omega+c\theta.
    \end{cases}    
    \end{equation*}
Furthermore, the automorphism group of the ring $\langle 1,\omega,\theta \rangle$ over $B$ is isomorphic to the stabiliser of the form $(a,b,c,d)\in V(B)$ in $\GLtwo(B)$. The ring $A$ corresponding to a form $f$ is an order precisely when $f$ is irreducible.
\end{theorem}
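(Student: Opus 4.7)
The plan is to construct explicit maps in both directions and verify that they descend to the claimed bijection. Starting from a quadruple $(a,b,c,d)\in V(B)$, I would first check that the prescribed multiplication laws define a commutative associative $B$-algebra on the free module $\langle 1,\omega,\theta\rangle$. Commutativity and the unit role of $1$ are built into the definition, so the nontrivial step is associativity, which reduces to the two identities $\omega(\omega\theta)=\omega^2\theta$ and $\theta(\omega\theta)=\omega\theta^2$; both collapse to polynomial identities in $a,b,c,d$ that can be verified by direct expansion using the given rules. This produces a well-defined cubic ring from each form.

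For the inverse direction, I would take a cubic ring $A$ with $B$-basis $1,\alpha_1,\alpha_2$ and put it into a normalized form. Writing $\alpha_1\alpha_2=e+p\alpha_1+r\alpha_2$ for some $e,p,r\in B$, the substitution $\omega:=\alpha_1-r$, $\theta:=\alpha_2-p$ yields $\omega\theta=e+pr\in B$; this normalization is always possible since it only requires shifts by elements of $B$, with no invertibility assumptions needed. In the normalized basis $\omega^2$ and $\theta^2$ must lie in $B+B\omega+B\theta$, and reading off their coefficients together with $\omega\theta$ produces the quadruple $(a,b,c,d)$. The apparent overdetermination (six coefficients for four unknowns) is forced to be consistent by associativity in $A$, so the construction genuinely lands in $V(B)$.

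Next, I would verify discriminant preservation by computing the trace form in the normalized basis, using the multiplication-by-$\omega$ and multiplication-by-$\theta$ matrices to read off $\Tr(1)=3$, $\Tr(\omega)=-b$, $\Tr(\theta)=c$; the resulting Gram determinant should match the classical discriminant polynomial in $(a,b,c,d)$. For the orbit/isomorphism correspondence I would track an arbitrary $B$-basis change of $A$: after re-imposing the normalization, the translational freedom is absorbed and only a $\GLtwo(B)$-part acting on $(\omega,\theta)$ remains, and one computes that the induced action on $(a,b,c,d)$ is exactly $(gf)(x,y)=\det(g)^{-1}f((x,y)g)$. The automorphism statement then follows by specializing to basis changes that fix the form. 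The order/irreducibility statement follows by noting $A\otimes_B K=K[\omega]$, which is a field precisely when the minimal polynomial of $\omega$ (essentially $f(x,1)/a$) is irreducible over $K$, i.e.\ when the binary cubic form $f$ is irreducible.

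The main obstacle will be the careful bookkeeping in the orbit-matching step: showing that the translation freedom used during normalization is exactly absorbed by the unipotent part of $\GLtwo(B)$, while the determinant of $g$ accounts for the rescaling factor $\det(g)^{-1}$. This has to be done so that no extraneous equivalences appear and so that the induced scaling is compatible with the relation $\mathrm{Disc}(gf)=\det(g)^2\mathrm{Disc}(f)$.
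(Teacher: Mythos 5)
The paper does not prove this theorem itself—it quotes it with references to \cite[Section 2]{BST} and \cite[Theorem 5]{BSW}—and your outline is precisely the standard normal-basis argument used in those sources: verify associativity of the given structure constants, normalise a basis $1,\omega,\theta$ of an arbitrary cubic ring so that $\omega\theta\in B$, read off the form, and match basis changes with the twisted $\GLtwo(B)$-action. Your plan is correct as stated; the only details left implicit are routine ones (that $1$ extends to a $B$-basis of a cubic ring over a PID, and the degenerate case $a=0$ in the order-iff-irreducible claim, where $\omega(\omega+b)=0$ exhibits zero divisors directly), so no genuine gap.
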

We will primarily apply this theorem when $B = R$. However, another important case is when $B = K_\infty$, with $K_\infty$ being the local field obtained by completing $K$ at the prime $P_\infty$.

By using the Levi--Delone--Faddeev correspondence, we may translate the study of cubic rings to the study of cubic forms. However, we are only interested in maximal cubic rings, and we therefore need a criterion to check rings for maximality.

We say that a cubic ring $A$ over $R$ is nonmaximal if it is not maximal. Let $P\in R$ be a prime polynomial and $T = R\setminus RP$. We say that the cubic ring $A$ over $R$ is nonmaximal at $P$ if the cubic ring over $T^{-1}R$ obtained by localising at $P$ is nonmaximal. One checks that nonmaximality is a local condition, so that $A$ is nonmaximal if and only if it is nonmaximal at all primes $P$. We then have the following result, which is the function field version of \cite[Lemma 13]{BST} and is proved in the same way.
\begin{lemma}\label{nonmaxringlemma}
Let $P\in R$ be a prime. Then a cubic ring (over $R$) is nonmaximal at $P$ if and only if there is a basis  $1,\omega,\theta$ of $A$, with $\omega\theta\in R$, such that one of
\begin{equation*}
\begin{split}
    &R + R\frac{\omega}{P}+R\theta,\\
\end{split}
\end{equation*}
and
\begin{equation*}
\begin{split}
    &R + R\frac{\omega}{P}+R\frac{\theta}{P}
\end{split}
\end{equation*}
forms a ring.
\end{lemma}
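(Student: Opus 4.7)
The plan is to follow the proof of \cite[Lemma 13]{BST} closely, since the argument is purely algebraic over a PID and all relevant ingredients persist in our setting: $R = \fq[T]$ is a PID, each residue field $R/P$ is finite, and $3 \in R^\times$ (since $\gcd(q,6) = 1$). In particular, Theorem \ref{DeloneFaddeev} is at our disposal. Both directions of the equivalence will be treated separately.

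The implication $(\Leftarrow)$ is essentially immediate: if either of the displayed $R$-modules is a ring $A'$, then $A' \supsetneq A$ have the same rank, so $T^{-1}A' \supsetneq T^{-1}A$ is a cubic ring over $T^{-1}R$ strictly containing the localisation of $A$, witnessing nonmaximality at $P$.

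For $(\Rightarrow)$, assume $A$ is nonmaximal at $P$ and fix a cubic ring $B \supsetneq T^{-1}A$ over $T^{-1}R$. Both are free of rank three over the PID $T^{-1}R$ and the quotient $B/T^{-1}A$ is annihilated by a power of $P$, so there exists $\alpha \in B \setminus T^{-1}A$ with $P\alpha \in T^{-1}A$. By Theorem \ref{DeloneFaddeev} I pick a basis $(1, \omega_0, \theta_0)$ of $A$ with $\omega_0\theta_0 \in R$; further translating $\omega_0 \to \omega_0 - \Tr(\omega_0)/3$ and $\theta_0 \to \theta_0 - \Tr(\theta_0)/3$ (valid because $3$ is a unit in $R$), I may assume $\Tr(\omega_0) = \Tr(\theta_0) = 0$. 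Writing $P\alpha = c_0 + c_1\omega_0 + c_2\theta_0$ with $c_i \in T^{-1}R$ and taking traces forces $3c_0 \in P\cdot T^{-1}R$, so $c_0/P \in T^{-1}R \subset T^{-1}A$; subtracting this from $\alpha$ I reduce to $c_0 = 0$, while $(c_1, c_2) \not\equiv (0,0) \bmod P$. A $\GLtwo(R)$ change of basis on $(\omega_0, \theta_0)$ --- followed by a translation by elements of $R$ to restore $\omega\theta \in R$ (possible since the corresponding linear system has invertible coefficient matrix) --- yields a normalised basis $(1, \omega, \theta)$ of $A$ with $\omega/P \in B$.

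It then remains to decide which of the two displayed modules is a ring. Since $\omega/P \in B$, the $R$-module $R + R(\omega/P) + R\theta$ is contained in $B$. If it is already closed under multiplication, we are in the first case. Otherwise, the ring closure inside $B$ contains an element with a $P$ in the denominator not lying in $R + R(\omega/P) + R\theta$; computing $(\omega/P)^2$ and $(\omega/P)\theta$ via the explicit multiplication table of Theorem \ref{DeloneFaddeev} shows that, modulo this module, the only possibility is $\theta/P$, landing us in the second case. The main obstacle is the combined normalisation step producing $\omega/P \in B$ while simultaneously keeping $\omega\theta \in R$; this is a delicate but routine manipulation of $\GLtwo(R)$-actions and $R$-translations, carried out in \cite{BST} and going through verbatim in the function field setting.
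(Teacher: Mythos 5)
Your easy direction and the reduction to an element $\alpha\in B\setminus T^{-1}A$ with $P\alpha\in T^{-1}A$ (plus the trace trick, using $3\in R^\times$) are fine, but the passage from $\alpha$ to the conclusion has a genuine gap, concentrated in two places. First, the normalising translation that restores $\omega\theta\in R$ shifts $\omega$ by an element of $R$ that need not be divisible by $P$; since $B\cap K=T^{-1}R$, this can destroy $\omega/P\in B$, and your parenthetical about an invertible linear system only addresses solvability of the translation, not this compatibility. Second, and more seriously, even when $\omega/P\in B$ holds for a normal basis, your concluding dichotomy does not follow. Take the cubic ring $A$ attached to the form $(a,b,c,d)=(1,Pb_0,P^2c_0,P^3d_0)$. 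Here $\omega$ has characteristic polynomial $x^3+Pb_0x^2+P^2c_0x+P^3d_0$, so $\omega/P$ is integral and $B=T^{-1}A[\omega/P]$ is a proper overring: $A$ is nonmaximal at $P$, and running your recipe with $\alpha=\omega/P$ returns the original normal basis, for which indeed $\omega/P\in B$. Yet $R+R(\omega/P)+R\theta$ is not a ring (one computes $(\omega/P)^2=-c_0-b_0(\omega/P)+\theta/P^2$, and being a ring would force $P^2\mid a$), and $R+R(\omega/P)+R(\theta/P)$ is not a ring either (this would force $P\mid a$). In particular "$M_1$ not closed, hence $\theta/P\in B$, hence second case" is invalid: $\theta/P\in B$ only gives a containment in $B$, not multiplicative closure. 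The lemma holds for this $A$ only via a different $\GLtwo(R)$-frame (interchange the roles of $\omega$ and $\theta$, i.e.\ move the triple root of the form from $[0:1]$ to $[1:0]$), which corresponds to choosing $\alpha$ with $(c_1,c_2)\equiv(0,1)$ rather than $(1,0)$; nothing in your construction selects the right frame.

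The missing idea is that the frame must be chosen using a controlled overring rather than a single element. Replace $B$ by $C=T^{-1}A+P^{k-1}B$, where $P^k$ annihilates $B/T^{-1}A$; then $C$ is a ring with $PC\subseteq T^{-1}A$, and $\dim_{R/P}\big(C/T^{-1}A\big)\in\{1,2\}$ (dimension $3$ would give $1/P\in C$). In the one-dimensional case one arranges, by the trace trick and a $\GLtwo(R)$ move, that $C=T^{-1}A+T^{-1}R\,(\omega/P)$; now closure of $C$ under multiplication (its elements have a $P$-denominator only in the $\omega$-coordinate) forces the coefficient $m_3$ in $\omega\theta=m_1+m_2\omega+m_3\theta$ to be divisible by $P$, so the normalising translation preserves $\omega/P\in C$, and one checks that $R+R(\omega/P)+R\theta=C\cap P^{-1}A$ is a ring. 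In the two-dimensional case $C=T^{-1}R+T^{-1}R(\omega/P)+T^{-1}R(\theta/P)$, and $(\omega/P)(\theta/P)\in C$ gives $P\mid m_2,m_3$, leading to the second module. Alternatively, one can avoid bases and argue through the associated forms: the first (resp.\ second) module is a ring precisely when $P^2\mid a$, $P\mid b$ (resp.\ $P\mid a,b,c,d$), and nonmaximality at $P$ is equivalent to some $\GLtwo(R)$-translate of the form satisfying one of these conditions. Note that the paper itself gives no details here, simply invoking \cite[Lemma 13]{BST}, so this frame-selection step is exactly the content a self-contained proof must supply.
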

We remark that a basis $1,\omega,\theta$ such that $\omega\theta \in R$ is called a normal basis. By using the explicit description in Theorem \ref{DeloneFaddeev}, we see that being nonmaximal at $P$ can be checked by considering the coefficients of an associated form modulo $P^2$.

We end this section with a brief discussion concerning subrings and overrings. Let $P\in R$ be a prime polynomial. We say that a cubic ring $A'$ is a $P$-overring of the cubic ring $A$ if $A\subseteq A'$, $\abs{A'/A} = \abs{R/PR}$ and $T_Q^{-1}A' = T_Q^{-1}A$ for all primes $P\neq Q\in R$, where $T_Q = R\setminus RQ$. We say that $A'$ is a $P$-subring of $A$ if $A$ is a $P$-overring of $A'$. We then have the following analogues of \cite[Propositions 15-16]{BST} with the same proof.
\begin{lemma}\label{subringformlemma}
   The number of $P$-subrings of a cubic ring over $R$ is equal to $\omega_P(f)$, the number of zeros of the corresponding binary cubic form $f$, reduced modulo $P$, in $\mathbb{P}^1(R/P R)$.
\end{lemma}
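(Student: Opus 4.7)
The plan is to parameterise the set of $P$-subrings of $A$ as a subset of $\mathbb{P}^1(R/PR)$ and identify the subring condition with the vanishing of the associated binary cubic form $f$ modulo $P$. First I would show that any $P$-subring $A'$ of $A$ automatically contains $PA$: indeed, $A/A'$ is a finitely generated $R$-module of order $\abs{R/PR}$ whose localisation at every prime $Q\neq P$ is trivial, so it is annihilated by $P$. Hence $A'$ corresponds to an $R/PR$-linear subspace $W = A'/PA \subseteq A/PA$ of codimension one containing $\bar{1}$, and conversely any such $W$ yields an $R$-sublattice $A'=W+PA$ of index $\abs{R/PR}$ which is a $P$-subring precisely when it is closed under multiplication.

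Next I would write down the parameterisation explicitly using the normal basis $\{1,\omega,\theta\}$ provided by Theorem \ref{DeloneFaddeev}. Every codimension-one subspace of $A/PA$ containing $\bar{1}$ is uniquely of the form
\begin{equation*}
    W_{[s:t]} = \langle \bar{1},\ s\bar{\omega}+t\bar{\theta}\rangle,\qquad [s:t]\in \mathbb{P}^1(R/PR),
\end{equation*}
since such subspaces biject with one-dimensional subspaces of the two-dimensional quotient $(A/PA)/\langle \bar{1}\rangle$. The corresponding sublattice is $A'_{[s:t]} = R + R(s\omega+t\theta)+PA$, and the subring condition reduces to the single requirement that $(s\omega+t\theta)^2 \in A'_{[s:t]}$, the other required products being either in $R\cdot 1$ or linear in $s\omega+t\theta$ up to elements of $R$.

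Then comes the computational heart: expand $(s\omega+t\theta)^2 = s^2\omega^2+2st\,\omega\theta+t^2\theta^2$ using the multiplication laws from Theorem \ref{DeloneFaddeev} to obtain
\begin{equation*}
    (s\omega+t\theta)^2 = \bigl(-s^2ac-2stad-t^2bd\bigr)\cdot 1 + \bigl(-s^2 b - t^2 d\bigr)\omega + \bigl(s^2 a + t^2 c\bigr)\theta.
\end{equation*}
The element lies in $A'_{[s:t]}$ modulo $PA$ if and only if the $(\omega,\theta)$-component is a scalar multiple of $(s,t)$ modulo $P$; the corresponding $2\times 2$ determinant is
\begin{equation*}
    t(-s^2 b - t^2 d) - s(s^2 a + t^2 c) = -\bigl(as^3+bs^2t+cst^2+dt^3\bigr) = -f(s,t).
\end{equation*}
Thus $A'_{[s:t]}$ is a subring of $A$ precisely when $f(s,t)\equiv 0\pmod{P}$, i.e.\ when $[s:t]$ is one of the $\omega_P(f)$ zeros of $\bar{f}$ in $\mathbb{P}^1(R/PR)$, finishing the proof.

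The main obstacle is purely bookkeeping: verifying that the parameterisation by $\mathbb{P}^1(R/PR)$ is well-defined independently of lifts of $(s,t)$ to $R$ and of the choice of normal basis for $A$, and ensuring that each resulting $A'_{[s:t]}$ is genuinely a cubic ring (freeness of rank three follows from $R$ being a PID, while commutativity and the ring axioms are inherited from $A$ once closure under multiplication is verified). No essentially new ingredients are needed beyond the Delone--Faddeev multiplication laws.
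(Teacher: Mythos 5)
Your proof is correct and is essentially the argument the paper intends: the lemma is stated with proof deferred to \cite[Proposition 15]{BST}, and that proof proceeds exactly as you do, by showing a $P$-subring contains $R+PA$, parametrising the codimension-one subspaces of $A/PA$ containing $\bar{1}$ by $[s:t]\in\mathbb{P}^1(R/PR)$, and checking closure of $(s\omega+t\theta)^2$ via the Levi--Delone--Faddeev multiplication laws, which reduces to the vanishing of $f(s,t)$ modulo $P$. Your expansion of $(s\omega+t\theta)^2$ and the resulting determinant $-f(s,t)$ are correct, so no gap remains beyond the bookkeeping you already note.
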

\begin{lemma}\label{overringlemma}
    The number of $P$-overrings of a cubic ring $A$, contained in $A\otimes K$ is equal to the number of double zeros $\alpha \in \mathbb{P}^1(R/PR)$ of the corresponding binary cubic form $f \nsmod{P}$, such that $f(\alpha') \equiv 0 \mod P^2$ for all $\alpha'\equiv \alpha$ modulo $P$.
\end{lemma}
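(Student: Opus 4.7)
The plan is to follow the template of the analogous number-field proof of \cite[Proposition 16]{BST}: translate the problem through the Levi--Delone--Faddeev correspondence (Theorem \ref{DeloneFaddeev}) and carry out a direct computation in $A/P^2 A$. Fix a normal basis $(1,\omega,\theta)$ of $A$ with associated form $f=(a,b,c,d)$. Any $P$-overring $A'\subset A\otimes K$ of $A$ has $[A':A]=\abs{R/PR}$, so $A'/A$ is cyclic of order $\abs{R/PR}$, forcing $A'=A+R\beta/P$ for some $\beta\in A\setminus PA$; the module $A'$ depends only on the class $[\beta]\in\mathbb{P}^2(R/PR)$ (scaling by $(R/PR)^{\ast}$ and translation by $PA$).

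For such an $A'$ to be a ring, I need $\omega\beta/P$, $\theta\beta/P$, and $\beta^2/P^2$ to lie in $A'$. Expanding each product in the basis $(1,\omega,\theta)$ by means of the multiplication laws $\omega\theta=-ad$, $\omega^2=-ac-b\omega+a\theta$, $\theta^2=-bd-d\omega+c\theta$ from Theorem \ref{DeloneFaddeev}, the first two conditions become the statement that $\bar\beta\in A/PA$ is a simultaneous eigenvector of multiplication by $\bar\omega$ and $\bar\theta$. Equivalently, they produce an $R/PR$-algebra homomorphism $\phi\colon A/PA\to R/PR$ with $\bar\beta$ in its kernel, and such homomorphisms are in natural bijection with the projective roots $\alpha\in\mathbb{P}^1(R/PR)$ of $f\nsmod{P}$. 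The third condition, reduced modulo $P$, forces $\bar\beta^{\,2}=0$ in $A/PA$; decomposing $A/PA$ as a product of local $R/PR$-algebras, this selects the unique nonzero nilpotent element (up to scalar) living in a local factor of length at least $2$, which is to say that $\alpha$ must be a double zero of $f\nsmod{P}$. Hence $[\bar\beta]\in\mathbb{P}^2(R/PR)$ is pinned down uniquely by $\alpha$, and there is at most one candidate overring per double zero.

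What remains is the mod-$P^2$ part of the third condition, which has not yet been used. A Taylor expansion of $f$ at a lift $\alpha'\in R^2$ of $\alpha$ shows that, since the first-order partials of $f$ at $\alpha'$ already vanish modulo $P$ by the double-zero condition, the leftover mod-$P^2$ requirement reduces precisely to $f(\alpha')\equiv 0\nsmod{P^2}$ for every lift $\alpha'$ of $\alpha$. This completes the bijection between $P$-overrings of $A$ and double zeros of $f\nsmod{P}$ satisfying the stated congruence. The main work lies in the middle paragraph: verifying that the eigenvector interpretation of the first two inclusions really matches the projective roots of $f$, and that adding $\bar\beta^{\,2}=0$ carves out exactly the double zeros. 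These are elementary linear-algebra computations over the finite residue field $R/PR$ and involve nothing about $R$ beyond its being a PID with finite residue fields, so the argument from \cite[Proposition 16]{BST} transfers verbatim.
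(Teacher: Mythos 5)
Your overall strategy --- writing a $P$-overring as $A' = A + R\beta/P$ with $\beta\in A\setminus PA$ and translating closure under multiplication into conditions on $\bar\beta\in A/PA$ together with a condition on $\beta^2$ modulo $P^2$ --- is viable, and it is in the spirit of what the paper intends (the paper gives no independent argument, only the assertion that \cite[Propositions 15--16]{BST} carry over). But two steps of your middle and final paragraphs have genuine problems. First, the dictionary you invoke is not the right one. The first two closure conditions say exactly that the line $(R/PR)\bar\beta$ is an ideal of $A/PA$; they do \emph{not} give $\bar\beta\in\ker\phi$ for the associated character $\phi$ (take $A/PA\simeq (R/PR)^3$ and $\bar\beta$ an idempotent) --- that only follows once $\bar\beta^{\,2}=0$ is imposed. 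More seriously, roots of $f\nsmod{P}$ in $\mathbb{P}^1(R/PR)$ are \emph{not} in bijection with $R/PR$-algebra homomorphisms $A/PA\to R/PR$: when every coefficient of $f$ is divisible by $P$ one has $A/PA\simeq R/PR\oplus V$ with $V^2=0$, there is a single such homomorphism but $\abs{P}+1$ roots, the square-zero ideal line is far from unique, and there genuinely can be several $P$-overrings (e.g.\ when $P^2\mid f$), so your conclusion ``at most one candidate overring per double zero, pinned down by the unique nilpotent'' fails there and the count would come out wrong. The correct dictionary attaches to the line $(R/PR)\bar\beta$ the point $(x_0:y_0)\in\mathbb{P}^1(R/PR)$ defined by $\bar\beta\equiv x_0\bar\omega+y_0\bar\theta$ (one must check from the multiplication laws that the ideal condition kills the $1$-component of $\bar\beta$), and one then verifies, again from the multiplication laws of Theorem \ref{DeloneFaddeev}, that ``$(R/PR)\bar\beta$ is an ideal'' corresponds to $(x_0:y_0)$ being a zero of $f\nsmod{P}$ and ``square-zero ideal'' to its being a double zero; this works uniformly, including the degenerate case.

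Second, the decisive equivalence --- that the residual mod-$P^2$ condition $\beta^2\in P^2A+PR\beta$ is precisely $f(\alpha')\equiv 0\nsmod{P^2}$ for all lifts $\alpha'$ --- is only asserted. Your Taylor expansion shows something weaker, namely that at a double zero the class of $f(\alpha')$ modulo $P^2$ does not depend on the lift; it never connects the ring-theoretic condition to values of $f$. The honest finish is short: using $\GLtwo(R)$-equivariance of the correspondence, normalise so that $\alpha=(0:1)$ and $\bar\beta=\bar\theta$; the double-zero condition is then $c\equiv d\equiv 0\nsmod{P}$, the closure of $A'$ under multiplication by $A$ amounts to $P\mid d$, and since $\theta^2=-bd-d\omega+c\theta$, the condition $\theta^2\in P^2A+PR\theta$ holds if and only if $P\mid c$ and $P^2\mid d$, which is exactly the statement that $f(\alpha')\equiv 0\nsmod{P^2}$ for every lift $\alpha'$ of $(0:1)$. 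Together with the converse construction (each such $\alpha$ yields the overring $A+R\beta/P$ with $\bar\beta$ the corresponding square-zero ideal line), this closes the two gaps; as written, however, the key identifications in your argument are either unjustified or false in the case $f\equiv 0\nsmod{P}$.
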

These lemmas will be employed when we sieve cubic rings for maximality in Section \ref{fieldchapter}, using the discriminant reducing method from \cite[Chapter 9]{BST}.

\section{Counting cubic forms}\label{latticech}
We now show how to count orbits of binary cubic forms over $R$. Our method is analogous to \cite[Section 5]{BST}. We let $v \in V(K_\infty)$ have nonzero discriminant. By Theorem \ref{DeloneFaddeev} with $B=K_\infty$, $v$ lies in an orbit corresponding to some cubic Étale extension of $K_\infty$. As $2,3\nmid q$, there are only finitely many such Étale extensions. Specifically, one sees this by first noting that there is a unique unramified extension of every degree and then bounding the number of totally ramified extensions using Krasner's lemma. We will provide an explicit description of these cubic Étale extensions in Section \ref{autch}.

We write $(\mathrm{Aut}(\sigma))_\sigma$ for the collection of automorphism groups over $K_\infty$, where $\sigma$ runs over the isomorphism classes of Étale extensions of $K_\infty$ of degree three. For each $\sigma$, we define $ V(K_\infty)^\sigma$ to be the subset of $V(K_\infty)$ whose associated cubic ring has isomorphism class $\sigma$. We also fix representatives $v_\sigma \in V(K_\infty)^\sigma$. For $v\in V(R)^\sigma\subseteq V(K_\infty)^\sigma$, the absolute value of $\mathrm{Disc}(v)$ provides the absolute semilocal discriminant, with respect to $S=\{P_\infty\}$, of the corresponding cubic ring. Indeed, this follows from our previous discussion. Furthermore, by considering the ramification of the Étale extension corresponding to $\sigma$, we can obtain the part of the absolute discriminant corresponding to $P_\infty$.

Let $\mathcal{F}$ be a fundamental domain for the action of $\GLtwo(R) \backslash \GLtwo(K_\infty)$. We fix some $v_\sigma \in V(K_\infty)$. The first key observation is that for any $v\in V(R)$ in the same $\GLtwo(K_\infty)$ orbit as $v_\sigma$, we have that
\begin{equation}\label{multiplic}
     \#\{g\in \mathcal{F}: gv_\sigma \in  \GLtwo(R)v\} = \#\mathrm{Aut}(\sigma)/\#\mathrm{Stab}_{\GLtwo(R)}(v),
\end{equation}
cf. \cite[Theorem 9]{BSW}. Here, $\GLtwo(R)v$ should be interpreted as a set, not a multiset. To see this relation, note that if $h\in \GLtwo(R)$ and $gv_\sigma = hv$, then $\Tilde{g} = h^{-1}g$ is in the same $\GLtwo(R)$ orbit of $\GLtwo(K_\infty)$ as $g$ and $\Tilde{g}v_\sigma = v$. So the left-hand side above counts the number of $\GLtwo(R)$-orbits containing a $\Tilde{g}$ with $\Tilde{g}v_\sigma = v$. If we write $v = g_0v_\sigma$ with $g_0\in \GLtwo(K_\infty)$, then we can write $\Tilde{g} = g'g_0$. The number of choices for $g'$ is given by the number of $\GLtwo(R)$-orbits in $\mathrm{Stab}_{\GLtwo(K_\infty)}(v)$, which is exactly $\#\mathrm{Aut}(\sigma)/\#\mathrm{Stab}_{\GLtwo(R)}(v)$.

We let $V(R)^\sigma$ be the set of forms lying in the orbit corresponding to $\sigma$, and we let $N(V(R)^\sigma; X)$ denote the number of $\GLtwo(R)$-orbits of elements in $V(R)^\sigma$ whose discriminant has absolute value equal to $X$, with respect to $\abs{\cdot} = \abs{\cdot}_\infty$. We count each such element with a factor $(\#\mathrm{Stab}_{\GLtwo(R)}(v))^{-1}$ in the left-hand side below. Then, the relation \eqref{multiplic} shows that
\begin{equation*}
   N(V(R)^\sigma, X) = \frac{1}{\#\mathrm{Aut}(\sigma)}\#\{\mathcal{F}_Xv_\sigma \cap V(R)\},
\end{equation*}
where $\mathcal{F}_X$ is a fundamental domain for the left-action of $\GLtwo(R)$ on $$\GLtwo(K_\infty)_X = \big\{g\in \GLtwo(K_\infty): \abs{\det(g)}^2\abs{\mathrm{Disc}(v_\sigma)}=X\big\},$$ and $\mathcal{F}_Xv_\sigma$ is a multiset. Here, we made use of the fact that $\abs{\det(g)} = 1$ for any $g\in \GLtwo(R)$.
\subsection{The fundamental domain}
To understand $\mathcal{F}_Xv_\sigma$, we begin by explicitly describing $\mathcal{F}_X$. Over $\mathbb{Q}$, the fundamental domain is related to the classical fundamental domain for the action of $\SLtwo(\mathbb{R})$ on the upper-half plane. Over $\fq(T)$ we obtain something similar; see \cite[Ch. II.1]{Serre} for a more abstract treatment.

We note that $X$ must be some nonnegative power of $q$. Let us write $q^{2m}$ for $X/\abs{\mathrm{Disc}(v_\sigma)}$. Recall that $v_\sigma$ is a representative of the $\GLtwo(K_\infty)$-orbit $V(K_\infty)^\sigma$. We now pick another representative, which we, with some abuse of notation also denote by $v_\sigma$, whose discriminant differs by a factor $q^2$ in absolute value from the original representative. Then, by possibly changing which of the two representatives $v_\sigma$ we are considering, depending on the congruence class of $\log_q(X)$ modulo $4$, we may assume that $m =: 2m'$ is even. We then have the following result. 
\begin{prop}\label{fundprop}
     Let $\alpha \in \fq$ be a fixed non-square. A $(q-1)$-fold fundamental domain for the action of $\GLtwo(R)$ on $\{g\in \GLtwo(K_\infty): \abs{\det(g)} = q^{2m'}\}$ is given by the set of all matrices of the form $\lambda nak$, with
     \begin{equation*}
     \begin{cases}
         \lambda\in K_\infty: \lambda = \lambda' \pi^{m'}, \,\,\abs{\lambda'}= 1, \lambda' \equiv 1 \,\, \mathrm{mod}\,\, P_\infty.
        \\ n=n(f) = \begin{pmatrix}
             1 & f \\0 & 1
         \end{pmatrix}: f\in K_\infty, \abs{f}\leq q^{-1},
         \\ a = a(t) = \begin{pmatrix}
             t & 0 \\0 & t^{-1}
         \end{pmatrix}:  \,\, t = \pi^{\ell}t_0\in K_\infty,\,\, \ell \leq 0, \abs{t_0} = 1,  t_0 \equiv 1 \,\, \mathrm{mod}\,\, P_\infty,

         \\ k = \begin{pmatrix}
             a & c\alpha \\c & a
         \end{pmatrix}: \abs{a^2-c^2\alpha} = 1,
     \end{cases}
    \end{equation*}
    except for finitely many exceptions, for every fixed $k$ and $\lambda$, and that matrices with $\abs{t} = 1$ are represented $(q-1)(q+1)$ times. 
\end{prop}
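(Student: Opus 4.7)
My plan is to obtain this fundamental domain via a two-step procedure: first decompose $\GLtwo(K_\infty)$ through an Iwasawa-type factorisation in the form $g = \lambda\, n(f)\, a(t)\, k$, and then use reduction theory for $\GLtwo(R)$ acting from the left to cut down each factor. This mirrors Serre's classical description of $\GLtwo(\fq[T]) \backslash \GLtwo(K_\infty) / \GLtwo(\mathcal{O}_\infty)$ as a half-infinite ray in the Bruhat--Tits tree of $\mathrm{PGL}_2(K_\infty)$, with vertices $V_i = [\mathcal{O}_\infty e_1 + \pi^{-i}\mathcal{O}_\infty e_2]$ for $i \geq 0$.

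For the decomposition, I would apply the Iwasawa factorisation $\GLtwo(K_\infty) = B(K_\infty) \cdot \GLtwo(\mathcal{O}_\infty)$ to write $g = b k_0$. Factoring $b = \lambda\, n(f)\, a(t)$ as scalar times upper unipotent times traceless-diagonal, and then further decomposing $k_0 = b' k$ with $b' \in B(\mathcal{O}_\infty)$ and $k$ in the compact subgroup $\tilde T$ of the proposition, one obtains $g = \lambda\, n(f)\, a(t)\, k$ after absorbing $b'$ into the preceding factors. The key point enabling the latter step is that the reduction of $\tilde T$ modulo $P_\infty$ is the anisotropic torus of $\GLtwo(\fq)$, which acts transitively on $\mathbb{P}^1(\fq) = B(\fq) \backslash \GLtwo(\fq)$ with point-stabiliser the diagonal $\fq^*$; lifting, $\GLtwo(\mathcal{O}_\infty) = B(\mathcal{O}_\infty) \cdot \tilde T$, and the scalar intersection $\fq^* \subset \tilde T \cap B(\mathcal{O}_\infty)$ accounts for exactly a $(q-1)$-fold overcount in this factorisation. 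The determinant condition then forces $|\lambda| = q^{m'}$.

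Next I would apply reduction theory for $\GLtwo(R)$ acting on the Borel piece. Left-multiplication by $n(h)$ for $h \in R$ shifts $f$ by $h$, so I may take $f$ with no polynomial part, i.e.\ $|f| \leq q^{-1}$. The Weyl element $w = \bigl(\begin{smallmatrix} 0 & 1 \\ 1 & 0 \end{smallmatrix}\bigr) \in \GLtwo(R)$ interchanges $t$ and $t^{-1}$ (after reabsorbing unipotent factors into $n(f)$), letting me assume $\ell = \nu_\infty(t) \leq 0$, i.e.\ $|t| \geq 1$. Finally, diagonal matrices with $\fq^*$-entries and central scalars in $\GLtwo(R)$ normalise the unit parts $t_0 \equiv 1 \pmod{P_\infty}$ and $\lambda' \equiv 1 \pmod{P_\infty}$. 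This matches Serre's ray exactly: the exponent $\ell \leq 0$ indexes the vertex, while the remaining parameters trace out the fibre.

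The main obstacle is the exceptional multiplicity $(q-1)(q+1)$ on the locus $|t|=1$, which corresponds to the initial vertex $V_0 = [\mathcal{O}_\infty^2]$ at the tip of Serre's ray. The stabiliser of $V_0$ in $\mathrm{PGL}_2(R)$ equals the image of $\GLtwo(\fq)$, whereas for every other vertex $V_i$ with $i \geq 1$ the stabiliser is only the image of $B(\fq)$; the ratio of these stabilisers is $|\mathrm{PGL}_2(\fq)/(B(\fq)/\fq^*)| = q+1$. This additional symmetry is implemented concretely by combining the Weyl element with the upper unipotents over $\fq \subseteq R$, and it produces $(q+1)$ extra identifications among our candidate tuples $(\lambda, f, t, k)$ on $|t| = 1$, yielding the stated total multiplicity. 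The remaining finitely many exceptions at each fixed $(\lambda, k)$ arise from boundary cases of the truncations $|f| \leq q^{-1}$ and $\ell \leq 0$ where a single cell meets its neighbour; these are finite in number and bounded independently of $m'$, and can be enumerated by a direct case analysis.
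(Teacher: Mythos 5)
Your route (Iwasawa decomposition plus the non-split torus as a complement to the Borel in $\GLtwo(\mathcal{O}_\infty)$, then reduction theory read off Serre's ray) is genuinely different from the paper, which works instead with the action on $K_\infty(\sqrt{\alpha})\setminus K_\infty$ and an elementary reduction there; the identity $\GLtwo(\mathcal{O}_\infty)=B(\mathcal{O}_\infty)\tilde T$ and the resulting existence of a factorisation $g=\lambda n(f)a(t)k$ are fine. However, two steps carrying the real content of the proposition are not established. First, the normalisation $\abs{t}\geq 1$ does not follow from a single application of the Weyl element: since $w\,n(f)a(t)=\left(\begin{smallmatrix}1&0\\ f&1\end{smallmatrix}\right)a(t^{-1})\,w$, the conjugated unipotent is lower triangular, and re-expressing $\left(\begin{smallmatrix}1&0\\ f&1\end{smallmatrix}\right)a(t^{-1})wk$ in the form $\lambda n(f')a(t')k'$ changes $t$ by an amount depending on $\abs{f}$ and $\abs{t}$; "reabsorbing unipotent factors into $n(f)$" hides exactly what must be proved. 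The correct argument is either an iteration (translate by $R$, invert, repeat — in the paper each inversion multiplies the relevant size by at least $q^2$ while it is $<1$) or an explicit appeal to Serre's theorem that $\GLtwo(R)\backslash X$ is a half-line; as written your step would fail.

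Second, and more seriously, the multiplicity claims are the heart of the statement and your derivation of them is incorrect. Serre's vertex stabilisers are $\Gamma_0=\GLtwo(\fq)$ and, for $i\geq 1$, $\Gamma_i=\{\left(\begin{smallmatrix}a&b\\ 0&d\end{smallmatrix}\right): a,d\in\fq^*,\ b\in\fq[T],\ \deg b\leq i\}$; these are not $B(\fq)$, they grow with $i$, and $\#\Gamma_0/\#\Gamma_1=(q+1)/q$, not $q+1$. Your factor $q+1$ comes out only because the extra unipotents in $\Gamma_i$ happen not to preserve the normalisation $\abs{f}\leq q^{-1}$ — which is precisely the point: the multiplicity of the parametrised set along an orbit is the number of $\gamma\in\GLtwo(R)$ taking one normalised matrix (with $\abs{f}\leq q^{-1}$, $t_0\equiv 1$, $\lambda'\equiv 1$, $k$ in the unit non-split torus) to another, since the left action is free; it is not a ratio of vertex stabilisers on the tree. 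Pinning down those $\gamma$ is exactly the case analysis the paper performs: generically only the scalars $\fq^*$ survive (multiplicity $q-1$); at $\abs{t}=1$ one finds $q+1=(q-1)+2$ elements of the reduced domain in each orbit, coming from the $d=0$ involution and the $(q-1)^2$ solutions of a congruence system in $\GLtwo(\fq)$, whence $(q-1)(q+1)$; and the finitely many exceptions arise from points with extra coincidences (e.g.\ $\tau^{-1}$ an $\fq$-multiple of $\tau$, or non-central elements of $\GLtwo(\fq)$ fixing the point), not from boundary effects of the truncations $\abs{f}\leq q^{-1}$ and $\ell\leq 0$. Without an analysis of this kind, the exact constants $(q-1)$ and $(q-1)(q+1)$ and the exceptions clause remain unproven.
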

We denote the subgroup of matrices of the form $\lambda I $ with $\lambda$ as above by $\Lambda_X$.
\begin{proof}
    We give a proof that is quite similar to the number field case. First, we claim that $\GLtwo(K_\infty)$ acts on the set-difference $K_\infty(\sqrt{\alpha})\setminus K_\infty$. Indeed, for $\tau\in K_\infty(\sqrt{\alpha})\setminus K_\infty$ one defines
    \begin{equation*}
        g(\tau) = \begin{pmatrix}
            a &b \\c &d
        \end{pmatrix}\tau = \frac{a\tau+b}{c\tau+d}.
    \end{equation*}
    Any element in $\tau \in K_\infty(\sqrt{\alpha})$ has the form $x+y\sqrt{\alpha}$, $a,b\in K_\infty$. Let us write $\Re(\tau) = x$ and $\Im(\tau) =y$. One checks that
    \begin{equation*}
        \Im(g(\tau)) = \frac{\det(g)\Im(\tau)}{\mathcal{N}(c\tau+d)},
    \end{equation*}
    where $\mathcal{N}(x+y\sqrt{\alpha}) = x^2-\alpha y^2$ is the Galois norm.
    
    We claim that the action above is transitive.  Indeed, by acting with a matrix of the form $n(f)$ above, without the condition on $\abs{f}$, $\tau$ maps to $\tau+f$, $f\in K_\infty$. This means that we can map $\tau$ to an element $\tau'$ with arbitrary $\Re(\tau')$, while keeping $\Im(\tau) = \Im(\tau')$ fixed. We are therefore done if we can show that given any $y\in K_\infty$, there is a $g\in \GLtwo(K_\infty)$ with, say, $\Im(g\sqrt{\alpha}) = y$.

    Recall that $\Im(g\sqrt{\alpha}) = \det(g)/(d^2-c^2\alpha)$. Let us write $y = u(T^{-1})^{m_0}$, where $u\in \mathcal{O}_\infty^*$ is a unit. We can further decompose $u$ into $u=u'u_0$, where $u'\in \fq$ and $u_0 \in 1+\pi_\infty\mathcal{O}_\infty$. In particular $u_0 = v^2$ for some $v\in \mathcal{O}_\infty^*$, by Hensel's lemma; see Lemma \ref{HenselLemma}. We also write $m_0=2n_0+\epsilon$, with $\epsilon$ being zero or one. Then, we have that
    \begin{equation*}
    \tau' : =\begin{pmatrix}
        u' & 0 \\ 0& 1
    \end{pmatrix}\begin{pmatrix}
        v\pi^{n_0} & 0 \\ 0& v^{-1}\pi^{-n_0}
    \end{pmatrix}\begin{pmatrix}
        \pi^{\epsilon} & 0 \\ 0& 1
    \end{pmatrix}(\sqrt{\alpha})
    \end{equation*}
    satisfies $\Im(\tau') = y$. This concludes the proof of transitivity.

    A short calculation shows that the stabiliser of $\sqrt{\alpha}$ is the group $\Tilde{K}$ of matrices of the form
    \begin{equation*}
        \begin{pmatrix}
            a & c\alpha \\ c & a
        \end{pmatrix}.
    \end{equation*}
    Setting $c = 0$ we see that this group contains the subgroup of matrices $\lambda I$, with $\lambda \in K_\infty^*$. Furthermore, as $\alpha$ is a nonsquare, the determinant $a^2-c^2\alpha$ has an even valuation. The stabiliser is thus the direct product of the group $\Lambda$ of diagonal matrices $\lambda I$, with $\lambda$ having unit part congruent to one modulo $P_\infty$, and the subgroup $K$ of $\Tilde{K}$ consisting of matrices with determinant in $\fq^*$.

    By the orbit-stabiliser theorem, we have that
    \begin{equation*}
        \GLtwo(K_\infty)/\Lambda K \simeq K(\sqrt{\alpha})\setminus K_\infty,
    \end{equation*}
    with the map being given by acting on $\sqrt{\alpha}$. Next, we find a fundamental domain for the left-action of $\GLtwo(R)$ on the right-hand side above.

    First, by translating with an element of the form $n(f)\in \GLtwo(R)$, with $f\in R$, we can map any element $\tau$ to an element of the form $a+b\sqrt{\alpha}$, with $\abs{a} < 1$. We claim that we can, in fact, map any element to an element of the form $x+y\sqrt{\alpha}$ with $\abs{x} < 1$ and $\abs{y}\geq 1$. Indeed, if $x+y\sqrt{\alpha}$ has $\abs{x} < 1$, and $\abs{y} < 1$, then
    \begin{equation*}
        \begin{pmatrix}
            0 & 1 \\ 1 & 0
        \end{pmatrix}(x+y\sqrt{\alpha}) = \frac{1}{x+y\sqrt{\alpha}} = \frac{x-y\sqrt{\alpha}}{x^2-y^2\alpha},
    \end{equation*}
    where
    \begin{equation*}
        \frac{\abs{y}}{\abs{x^2-y^2\alpha}} \geq q^{2}\abs{y}.
    \end{equation*}
    Repeating this procedure, we eventually obtain an element of the desired form. By scaling with $\mathrm{diag}(c,1)\in \GLtwo(R)$, with $c\in \fq$, we can see that every element can be mapped to one of the form $x+y\sqrt{\alpha}$ with not only $\abs{x} < 1$, $\abs{y}\geq 1$, but also with the unit part of $y$ being congruent to one. We denote the set of all such elements by $D$.

    We now check for elements of the form above lying in the same orbit.
    Suppose that $\tau_1$ and $\tau_2$ have the stated form, and assume that $\tau_2 = g(\tau_1)$ for some $g\in \GLtwo(R)$ of the form
    \begin{equation*}
    \begin{pmatrix}
        a & b \\c & d
    \end{pmatrix}.
    \end{equation*}
    Without loss of generality $\abs{\Im(\tau_1)} \leq \abs{\Im(\tau_2)}$. As $\tau_2 = g(\tau_1)$, we have that $\mathcal{N}(c\tau_1+d)\Im(\tau_2)= \det(g)\Im(\tau_1)$. Note that $\abs{\det(g)} = 1$.  Let us first assume that $c = 0$. Then, $a,d\in \fq^*$, and $g(\tau_1) = ad^{-1}\tau_1+d^{-1}b$. Now, $\abs{b}\geq 1$, or $b= 0$ which means that we must have that $b=0$ for both $\tau_1$ and $\tau_2$ to be in $D$. However, this further implies that $\tau_2 = ad^{-1}\tau_1$, and the restriction on the congruence class of $\Im(\tau_i)$ then shows that $\tau_1 = \tau_2$.

    Let us turn to the case when $c\neq 0$. Then, $$\abs{\mathcal{N}(c\tau_1+d)} = \abs{(d+c\Re(\tau_1))^2-c^2\Im(\tau_1)^2\alpha} = (\max\{\abs{d+c\Re(\tau_1)}, \abs{c\Im(\tau_1)}\})^2.$$ As $\abs{\Im(\tau_1)} \leq \abs{\Im(\tau_2)}$, we must have that $\abs{\Im(\tau_1)}, \abs{c},\abs{d} \leq 1 $, i.e. that $\abs{\Im(\tau_1)}=1, c\in \fq^*$, and $d\in \fq$. Here we used that $c\neq 0$. This also implies that $\abs{\Im(\tau_1)} =\abs{\Im(\tau_2)}$ and that $\abs{\mathcal{N}(c\tau_1+d)}$ =1. 

    As the situation is now symmetric with respect to $\tau_1$ and $\tau_2$ we can repeat the procedure. A matrix taking $\tau_2$ to $\tau_1$ is $g' = \det(g)g^{-1}$, i.e.
    \begin{equation*}
        \begin{pmatrix}
            d & -b \\ -c & a
        \end{pmatrix}.
    \end{equation*}
    Proceeding as before, we find that $a\in \fq$. This also implies that $b\in \fq$ so that in fact $g\in \GLtwo(\fq)$. Note that this already shows that $\tau_1$ can only lie in the orbit of finitely many other elements from $D$.

    We now study the relation $\tau_2 = g(\tau_1)$ more closely. Let us assume that $d= 0$. Then,
    \begin{equation*}
        \tau_2 = \frac{a}{c}+\frac{b}{c\tau_1} = \frac{a}{c}+\frac{b\overline{\tau_1}}{c\mathcal{N}(\tau_1)},
    \end{equation*}
    where $\overline{\tau_1}$ is the Galois conjugate of $\tau_1$. We compute $\abs{\mathcal{N}(\tau_1)} = \abs{\Re(\tau_1)^2-\Im(\tau_1)^2\alpha} =\abs{\Im(\tau_1)^2\alpha}= 1$. If $a\neq 0$ this shows that $\abs{\Re(\tau_2)}=1$ contradicting $\tau_2\in D$, so we must in fact have $a = 0$. As $a=d=0$, we must have that $b\neq 0$ and $\tau_2 = (bc^{-1})\tau_1^{-1}$.  This is an involution composed with a multiplication map. By the condition on $\Im(\tau)$ for $\tau \in D$, we see that $\tau_1$ is related to precisely one distinct $\tau_2$ in $D$, unless $\tau_1^{-1}$ is an $\fq$-multiple of $\tau_1$. This happens for only finitely many $\tau_1$, so that we can disregard these cases.

    Finally, we treat the case when $d\neq 0$. In this case, one sees that $a\neq 0$ by switching the roles of $\tau_1$ and $\tau_2$. The relation between the $\tau_i$ is then
    \begin{equation}\label{tau2eq}
        \tau_2 = \frac{a\tau_1+b}{c\tau_1+b} = \frac{1}{\mathcal{N}(c\tau_1+d)}\big(bd +ac \mathcal{N}(\tau_1) + (ad+bc)\Re(\tau_1) +\det(g)\Im(\tau_1)\sqrt{\alpha} \big).
    \end{equation}
    Recall that $\abs{\mathcal{N}(\tau_1)} = \abs{\mathcal{N}(c\tau_1+d)} = \abs{\Im(\tau_1)} = \abs{\det(g)}= 1$ and that $\abs{\Re(\tau_1)} < 1$. Now, $\mathcal{N}(\tau_1) = \Re(\tau_1)^2-\Im(\tau_1)^2\alpha$, which is congruent to $-\alpha$ modulo $P_\infty$. Furthermore, $\mathcal{N}(c\tau_1+d)$ is congruent to $d^2-c^2\alpha$ modulo $P_\infty$. Hence, by isolating the part involving $\sqrt{\alpha}$ above, we see that the right-hand side in \eqref{tau2eq} is in $D$ if and only if
    \begin{equation*}
        bd \equiv \alpha ac \,\,\mathrm{mod}\,\, P_\infty, \,\,\, ad-bc\equiv d^2-c^2\alpha \,\,\mathrm{mod}\,\, P_\infty.
    \end{equation*}
    As $a,b,c,d\in \fq$ these congruences are equalities in $\fq$. If one fixes $c,d\in \fq^*$, then one sees that there is always a unique choice of $a,b$ solving the above equations. Hence, the system above has precisely $(q-1)^2$ solutions with $c,d\neq 0$.

    Possibly some of the solutions above simply maps $\tau_1$ to $\tau_1$. We therefore study the stabilisers of elements in $D$ and suppose that $g(\tau_1) = \tau_1$. If $c=0$, then our previous calculations show that $b=0$ and $a=d\in \fq^*$ so that $g$ is of the form $\lambda I$. In the case when $c\neq0$, the calculations above show that $g\in \GLtwo(\fq)$. This is a finite group, and one checks that each matrix which is not a multiple of the identity can stabilise at most two distinct points. Hence, up to finitely many exceptions, the stabiliser of $\tau_1$ in $D$ is the set of matrices $\lambda I$ in $\GLtwo(\fq)$. As there are $q-1$ such matrices, we find that up to finitely many exceptions, each element $\tau_1\in D$ with $\abs{\Im(\tau_1)} = 1$ is in the same $\GLtwo(R)$-orbit as $q+1 = (q-1) + 2$ distinct elements from $D$ (including $\tau_1$ itself). Here, the term $2$ comes from the two elements in the same orbit in the case $d=0$ studied above. If $\abs{\Im(\tau_1)} > 1$ then $\tau_1$ is the unique element from $D$ in its orbit.

    We have now found a fundamental domain for the action of $\GLtwo(R)$ on $K_\infty(\sqrt{\alpha})\setminus K_\infty$, up to some repetitions. We can use this to find a fundamental domain for the action of $\GLtwo(R)$ on $\GLtwo(K_\infty)/(\Lambda K)$. Indeed, from our proof of transitivity of the group action, we see that we can use $D$ to construct a fundamental domain $\Tilde{S}$ of matrices of the form
    \begin{equation*}
    \begin{pmatrix}
        1 & f\\ 0 & 1
    \end{pmatrix}\begin{pmatrix}
        t & 0\\ 0 & t^{-1}
    \end{pmatrix}\begin{pmatrix}
        \pi^{-\epsilon} & 0\\ 0 & 1
    \end{pmatrix},
    \end{equation*}
    with $\epsilon \in \{0,1\}$, $\abs{t}\geq 1$, $\abs{f} < 1$ and with the unit part of $t$ congruent to $1$. The points with $\epsilon =0$ and $\abs{t} = 1$ corresponds to the elements in $D$ with $(q+1)$ orbit representatives. Note that setting $\epsilon = 0$ or $1$ splits the fundamental domain into two parts depending on the parity of the valuation of the determinant.

    Finally, we use the fundamental domain above to find a fundamental domain for $ \GLtwo(R)\setminus\GLtwo(K_\infty)$. Let $\lambda, \lambda' \in \Lambda$, $s,s'\in S$ and $k,k'\in K$. Then, $\lambda s k = g\lambda' s'k'$ with $g\in \GLtwo(R)$ first of all means that $\lambda = \lambda'$ by considering the determinant. By acting on $\sqrt{\alpha}$, we see that in fact $g=g_0g'$, where $g' \in \fq^*$ and $g_0\in \GLtwo(\fq)$ is such that $g_0s' = s$. This provides either $q-1$ or $(q+1)(q-1)$ choices for $g$, with finitely many exceptions. As we are only interested in matrices whose determinant has even valuation, the proposition follows, with the set $S$ obtained from restricting to $\epsilon =0$ in the set $\Tilde{S}$ above.
\end{proof}

\subsection{Thickening the cusp}

We now wish to study $\#\{\mathcal{F}_Xv_\sigma \cap V(R)\}$. We accomplish this by averaging, as in \cite[Section 5.3]{BST}, over a well-chosen compact set. Specifically, we can replace $v_\sigma$ above with any $v\in V(K_\infty)^\sigma$ and in particular, we can replace $v_\sigma$ with $gv_\sigma$ for any $g\in \GLtwo(K_\infty)$. Using \eqref{multiplic} and Proposition \ref{fundprop}, we see that if $G_0\subseteq \GLtwo(K_\infty)$ is any set of finite nonzero measure with respect to a Haar measure $dg'$, then
\begin{equation}\label{intsetstart}
    N(V(R)^\sigma; X) = \frac{1}{\vol(G_0)\#\mathrm{Aut}(\sigma)}\int_{g'\in G_0} \#\{\mathcal{F}_Xg'v_\sigma \cap V(R)^\sigma\}'dg',
\end{equation}
where the $'$ indicates that the forms are counted with a factor $(q-1)^{-1}$, except for those corresponding to elements in $\mathcal{F}_X$ with $\abs{t}=1$, in the language of Proposition \ref{fundprop}, which should be weighed with a factor $(q-1)^{-1}(q+1)^{-1}$.

We now describe the set $G_0$ over which we will perform our averaging. The choice of this set marks the first occasion where we are significantly helped by the fact that we are working over a function field to simplify matters. Indeed, we will be able to choose $G_0$ to be both open and compact.

We first let $G'\subseteq \GLtwo(K_\infty)$ be the set of all matrices of the form
\begin{equation*}
    \begin{pmatrix}
        a & b \\ c &d
    \end{pmatrix},
\end{equation*}
with $\abs{a-1}, \abs{d-1} < 1$ and $\abs{b},\abs{c} < 1$. Then, $G'$ is compact and only consists of matrices with determinant whose absolute value equals one. Recall the compact set $K$ from Proposition \ref{fundprop}. We consider the product $G'' = KG'$, which is necessarily compact as the product of compact sets. As $G'$ is open, this product is also open as a union of the open sets $kG'$, $k\in K$.

For a fixed $v_\sigma$, consider the map $e:\GLtwo(K_\infty) \to V(K_\infty)$ given by $g\mapsto gv_\sigma$. A computation shows that the Jacobian of this map is $\mathrm{Disc}(v_\sigma) \neq 0$ so that the map is open. We finally let $G_0 = e^{-1}(e(G''))$ so that
\begin{equation*}
    G_0 = \bigcup_{h\in \mathrm{Stab}_{\GLtwo(K_\infty)}(v_\sigma)}G''h,
\end{equation*}
 which is also open and compact. We note that $e^{-1}(e(G_0)) = G_0$.

Now, we rewrite $\eqref{intsetstart}$ into a form that is easier to study. For $g\in \mathcal{F}_X$, let $\eta(g)$ be $(q-1)^{-1}(q+1)^{-1}$ if $g$ corresponds to a matrix with $\abs{t}=1$ and $(q-1)^{-1}$ else. The integral in \eqref{intsetstart} is then equal to
\begin{equation*}
    \sum_{x\in V(R)}\sum_{g\in \mathcal{F}_X}\eta(g)\int_{g'\in G_0} \mathbf{1}_{\{gg'v_\sigma = x\}}dg' = \sum_{x\in V(R)}\sum_\sumstack{h\in \GLtwo(K_\infty)\\ x=hv_\sigma}\int_{g'\in G_0}\eta(hg'^{-1}) \mathbf{1}_{\{g' \in \mathcal{F}_X^{-1}h\}}dg'.
\end{equation*}
By inversion invariance of $dg'$, we find that
\begin{equation*}
    \sum_{x\in V(R)}\sum_\sumstack{h\in \GLtwo(K_\infty)\\ x=hv_\sigma}\int_{g'\in G_0 \cap \mathcal{F}_X^{-1}h}\eta(hg'^{-1})dg' = \sum_{x\in V(R)}\sum_\sumstack{h\in \GLtwo(K_\infty)\\ x=hv_\sigma}\int_{g'\in G_0^{-1} \cap h^{-1}\mathcal{F}_X}\eta(hg')dg',
\end{equation*}
which, by multiplication invariance, is
\begin{equation*}
    \sum_{x\in V(R)} \sum_\sumstack{h\in \GLtwo(K_\infty)\\ x=hv_\sigma}\int_{g\in hG_0^{-1} \cap \mathcal{F}_X}\eta(g)dg = \int_{g\in \mathcal{F}_X}\eta(g)\sum_{x\in V(R)}\sum_\sumstack{h\in \GLtwo(K_\infty)\\ x=hv_\sigma}\mathbf{1}_{\{g\in hG_0^{-1}\}}dg.
\end{equation*}
The double sum equals
\begin{equation*}
    \sum_{x\in V(R)}\#\{h\in gG_0: hv_\sigma =x\} = \#\mathrm{Aut}(\sigma)\#\{x\in gG_0v_\sigma\cap V(R)\}.
\end{equation*}
Letting $B=G_0v_\sigma$, viewed as a set and not a multiset, we finally have that
\begin{equation}\label{Babove}
    N(V(R)^\sigma; X) = \frac{1}{\vol(G_0)}\int_{g\in \mathcal{F}_X}\eta(\abs{t}) \#\{x\in gB\cap V(R)\}dg,
\end{equation}
where points on the left-hand side are weighed by the inverse size of their stabilisers in $\GLtwo(R)$. Here we write $\eta(\abs{t})$ instead of $\eta(g)$ as $\eta$ only depends on $\abs{t}$.

To study the integral on the right-hand side above, we will need more information about the Haar measure on $\GLtwo(K_\infty)$. First, just as over $\mathbb{R}$, one sees that one Haar measure is $dg = \abs{\det g}^{-2}d\alpha d\beta d\gamma d\delta$ with $\alpha,\beta,\gamma,\delta$ being the entries of $g$. One can check through a differential calculation that in the coordinates of Proposition \ref{fundprop}, the Haar measure is given by $\abs{t}^{-3}\abs{\lambda}^{-1}dtd\lambda df dk$, where $dk$ is a Haar measure on the group $K$, cf. \cite[Section 5.3]{BST}. Furthermore, in the coordinates $a$ and $c$ from Proposition \ref{fundprop}, we have that $dk = a\,dc-c\,da$. We denote the measure of $K$ with respect to $dk$ by $\nu(K)$. The measures $dt$, $d\lambda$, and $df$ are Haar measures on the local field $K_\infty$, giving $\mathcal{O}_\infty$ measure $q$ (the purpose of this normalisation will be clear soon). Finally, we recall that $B$ is left $K$-invariant which means that
\begin{equation}\label{intcountcoords}
    N(V(R)^\sigma; X) = \frac{\nu(K)}{\vol(G_0)}\int_{ g\in \Lambda_X S }\eta(\abs{t}) \#\{x\in gB\cap V(R)\}\abs{t}^{-3}\abs{\lambda}^{-1}dtd\lambda df.
\end{equation}

\subsection{Geometry of numbers}
To study \eqref{intcountcoords}, we will find a good estimate of $\#\{x\in gB\cap V(R)\}$ by employing methods from the geometry of numbers. Over $\mathbb{Q}$, this was originally done by Davenport and Heilbronn \cite{DH} using Davenport's lemma. Over function fields, we can obtain very precise results by utilising the non-archimedean geometry.

We first recall that the set $B$ is defined as $G_0v_\sigma$. One checks through a Jacobian calculation that the map $g\mapsto gv_\sigma$ is open, so that $B$ is open. As $G_0$ is also compact, the same holds for $B$. This shows that the indicator function of $B$ is continuous on a compact set, and hence uniformly continuous. This means that there is some integer $\kappa$ such that $\abs{x_i-y_i} < q^\kappa$ for all $i=1,2,3,4$, implies that $x\in B$ if and only if $y\in B$. In particular, $B$ must be the disjoint union of finitely many $C_i$, where $C_i = v_i + \{(x_1,x_2,x_3,x_4): \abs{x_i} < q^\kappa\}$.

The above argument shows that counting lattice points in an open compact set can be reduced to the study of lattice points inside a box. After translation, we may assume that this box is centred at the origin. Counting points inside such a box can be done using the following result, proven in \cite[Theorem 23]{BSW} in greater generality using Poisson summation. We give an elementary proof which is sufficient for our purposes below.
\begin{lemma}\label{latticeptlemma}
    The set $\Tilde{C} = \{x\in K_\infty: \abs{x} < q^k\}$ contains exactly $q^k$ points from $R$ if $k \geq 0$, else it contains precisely one such point.
\end{lemma}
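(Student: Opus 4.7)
The plan is essentially a direct computation using the description of the absolute value on $K_\infty$ restricted to $R$. Recall that for a nonzero polynomial $f \in R = \fq[T]$ we have $\abs{f} = q^{\deg(f)}$, whereas $\abs{0} = 0$ by convention (corresponding to $\deg(0) = -\infty$). Thus the condition $\abs{f} < q^k$ for $f \in R$ is equivalent to either $f = 0$ or $\deg(f) \leq k-1$.

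For the first case, assume $k \geq 0$. The set of polynomials in $R$ of degree at most $k-1$, together with the zero polynomial, is exactly the $\fq$-vector subspace of $R$ spanned by $\{1, T, T^2, \ldots, T^{k-1}\}$, which has dimension $k$ over $\fq$. Since $\abs{\fq} = q$, this subspace contains exactly $q^k$ elements, and each of them satisfies $\abs{f} \leq q^{k-1} < q^k$, so they all lie in $\Tilde{C}$. Conversely, any $f \in R \cap \Tilde{C}$ belongs to this set, giving the count $q^k$.

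For the second case, assume $k < 0$, so that $q^k < 1$. For any nonzero $f \in R$ we have $\deg(f) \geq 0$, hence $\abs{f} \geq 1 > q^k$, so $f \notin \Tilde{C}$. The only element of $R$ satisfying $\abs{f} < q^k$ is therefore $f = 0$, giving exactly one point as claimed.

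The lemma is essentially immediate once one unwinds the definition of the non-Archimedean absolute value on $\fq(T)$; there is no real obstacle, since in contrast to the Archimedean setting (where Davenport's lemma or Poisson summation is needed to deal with boundary effects), the ultrametric nature of $\abs{\cdot}_\infty$ makes the "ball" $\Tilde{C}$ coincide exactly with a finite-dimensional $\fq$-subspace of $R$. The only minor subtlety is the treatment of the zero polynomial, which is why the statement splits into the two cases $k \geq 0$ and $k < 0$.
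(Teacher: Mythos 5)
Your proof is correct and follows the same route as the paper's: identify the elements of $R$ in $\Tilde{C}$ as the zero polynomial together with the polynomials of degree at most $k-1$, and count them as an $\fq$-vector space of dimension $k$ when $k \geq 0$. Your write-up simply spells out the $k<0$ case and the role of the zero polynomial more explicitly than the paper does.
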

\begin{proof}
    The points from $R$ inside $\Tilde{C}$ are precisely the polynomials in $T$ of degree less than or equal to $k-1$, and zero. If $k\geq 0$, there are precisely $q^k$ such polynomials. 
\end{proof}
Recalling our normalisation of the Haar measure on $K_\infty$, we see that $\#\{\Tilde{C}\cap R\} = \vol(\Tilde{C})$ for $k\geq 0$. Lemma \ref{latticeptlemma} is easily generalised to $K_\infty^4$.

While the proof of Lemma \ref{latticeptlemma} may seem trivial, the lemma itself is strong enough to allow us to determine $N(V(R)^\sigma; X)$ to a very high precision. Specifically, we have the following result.
\begin{prop}\label{implicitformcount}
    Let $\lambda_0$ be an arbitrary element from $\Lambda_X$. For $X $ larger than some absolute constant, we have that
    \begin{equation*}
        N(V(R)^\sigma; X) = \frac{\nu(K)}{\vol(G_0)}\left(\frac{q}{(q-1)(q^2-1)}\abs{\lambda_0}^{4}\vol(B)- \frac{1}{(q-1)(q^2-1)}\abs{\lambda_0}^{10/3}I^\sigma_1(\lambda_0)+\frac{q}{(q-1)^2}\abs{\lambda_0}^4I_0^\sigma\right), 
    \end{equation*}
    where $I_0^\sigma$ is defined in \eqref{redtermint} and $I^\sigma_1(\lambda_0) =: I^\sigma_1(X)$, defined in \eqref{sectermint}, depends only on the congruence class of $\log_q(X)$ modulo $3$ and on $\sigma$. 
\end{prop}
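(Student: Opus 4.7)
I start from \eqref{intcountcoords} and aim to evaluate $\#\{x \in gB \cap V(R)\}$ \emph{exactly} for each $g$, rather than merely estimating it. Since $B = G_0 v_\sigma$ is open and compact in $V(K_\infty) \cong K_\infty^4$, which is totally disconnected, the indicator $\mathbf{1}_B$ is uniformly continuous, so there is an integer $\kappa$ such that $B$ decomposes as a finite disjoint union of axis-aligned boxes $C_i = v_i + B_0$ with $B_0 = \{x \in V(K_\infty) : |x_j| < q^\kappa \text{ for all } j\}$. Because $B$ is $K$-invariant, only $g = \lambda n a$ in the notation of Proposition \ref{fundprop} affects the count. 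The diagonal action of $\lambda a$ scales the four coordinates of $B_0$ by $|\lambda_0|q^{3\ell}, |\lambda_0|q^{\ell}, |\lambda_0|q^{-\ell}, |\lambda_0|q^{-3\ell}$ (with $|t| = q^\ell$, $\ell \leq 0$), while the unipotent $n(f)$ with $|f| \leq q^{-1}$ preserves each axis-aligned box by the ultrametric inequality, only translating the centre. Consequently $gC_i$ is itself an axis-aligned box and Lemma \ref{latticeptlemma} applied coordinate-wise gives $\#\{x \in gC_i \cap R^4\}$ as an explicit product of four one-dimensional counts.

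The next step is to partition the $\ell$-integration into a \emph{main region}, in which all four scaled dimensions are at least $1$, and a \emph{cusp region}, in which the first (and possibly the second) scaled dimension falls below $1$. In the main region every coordinate count equals its dimension, so the count per box equals the box volume and summing over $i$ yields $\vol(gB) = |\lambda_0|^4\vol(B)$. Treating this equality as if it persisted across all $\ell \leq 0$ and integrating against $\eta(|t|)|t|^{-3}|\lambda|^{-1}\,dt\,d\lambda\,df$---with $\eta = ((q-1)(q+1))^{-1}$ at $\ell = 0$ and $(q-1)^{-1}$ otherwise, so that $\sum_{\ell\leq -1}q^{2\ell} = (q^2-1)^{-1}$---produces exactly the main-term coefficient $\frac{q}{(q-1)(q^2-1)}$ multiplying $|\lambda_0|^4\vol(B)$.

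For the correction terms I would analyse the cusp by separating each $C_i$ according to whether its centre satisfies $|(v_i)_1| < q^\kappa$ (``$A$-small boxes'', representing the reducible slice $B \cap \{A = 0\}$), and analogously $|(v_i)_2| < q^\kappa$ deeper in the cusp. For generic boxes, the $0/1$ one-dimensional count averages, over the compact parameters $\lambda', t_0, f$, to the exact density of $R + B(0, q^{k_j})$, which equals the corresponding box dimension; thus the average count equals the volume and no correction appears. For $A$-small boxes, by contrast, the integer $y = 0$ automatically lies in the shrunken first-coordinate ball, forcing a deterministic count of $1$ rather than the smaller value $q^{k_1}$. Integrating the resulting excess $(1 - q^{k_1})\cdot|\lambda_0|^3q^{-3\ell + 3\kappa}$ over $\ell$ in the cusp produces a geometric series in $q^{-\ell}$ whose two endpoints give contributions of size $|\lambda_0|^4$ and $|\lambda_0|^{10/3}$ respectively. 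The dominant $|\lambda_0|^4$ piece (together with the analogous deep-cusp contribution from $AB$-small boxes) assembles into the reducible term $\frac{q}{(q-1)^2}|\lambda_0|^4 I_0^\sigma$, with $I_0^\sigma$ the three-dimensional slice integral from \eqref{redtermint}. The subdominant $|\lambda_0|^{10/3}$ piece, from the upper endpoint $\ell \approx -(L+\kappa)/3$ with $L = \log_q|\lambda_0|$, yields the secondary term; since $\ell$ is integer-valued, the true threshold $\lceil -(L+\kappa)/3\rceil$ depends only on $L \bmod 3$, equivalently on $\log_q X \bmod 3$, which accounts for the three-case dependence of $I_1^\sigma(\lambda_0) = I_1^\sigma(X)$.

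The main obstacle is the cusp bookkeeping: one must verify rigorously that the generic-box contributions average cleanly to the volume (so they do not themselves generate a spurious secondary-sized term), using translation invariance of the unit-part parameters $\lambda', t_0$ under the compact integration. One must also show that the various small-$A$ and small-$AB$ contributions in the deep cusp collapse, after this compact averaging, into precisely the integrals $I_0^\sigma$ and $I_1^\sigma$ of \eqref{redtermint} and \eqref{sectermint}. The congruence-class dependence of $I_1^\sigma$ on $\log_q X \bmod 3$---which is ultimately the source of the three-case coefficient $C_2(M)$ in Theorem \ref{allfieldthm}---arises from the ceiling $\lceil -(L+\kappa)/3\rceil$, and carefully tracking this congruence through the calculation is the most delicate part of the argument.
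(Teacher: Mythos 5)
Your opening steps (the box decomposition of $B$, the fact that $n(f)$ does not distort axis-aligned boxes, the coordinatewise use of Lemma \ref{latticeptlemma}, and the integration giving the coefficient $\frac{q}{(q-1)(q^2-1)}$ of $\abs{\lambda_0}^4\vol(B)$) are sound and agree with the paper. The gap lies in your treatment of the corrections, which is exactly where the secondary term is produced. Your claim that a ``generic'' box (one not meeting the degenerate hyperplane) contributes no correction because the $0/1$ count in the short direction averages, over the unit parts $\lambda',t_0$, to the box dimension is only valid while the scaled centre coordinate has absolute value at least $q$: only then is the orbit of the centre under the unit-part averaging a ball of radius $\geq 1$, which meets $R$ with exactly the right density. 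For every box there is a $t$-threshold beyond which the scaled centre coordinate has absolute value $\leq 1$; past it the averaging ball is too small to equidistribute modulo $R$, the count is deterministically $0$ or $1$ while the assigned volume is positive, and the resulting per-box deviation, integrated against $\abs{t}^{-3}dt$, is of size $\asymp\abs{\lambda_0}^{10/3}$ --- i.e.\ of secondary size. This is reflected in the paper's formula: $I^\sigma_1$ in \eqref{sectermint} is an integral of $\vol(B_u)\abs{u}^{-2/3}$ over \emph{all} $u$, so every box of $B$ contributes to the secondary term, not only those on the reducible slice.

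Consequently, your attribution of the whole $\abs{\lambda_0}^{10/3}$ term to the ``$A$-small'' boxes is demonstrably wrong: for $\sigma=(3)$ or $(1^3)$ the forms in the orbit are irreducible over $K_\infty$, so $B$ contains no point whose first or last coordinate vanishes (indeed $I_0^\sigma=0$ and there are no small boxes at all), yet Proposition \ref{secevalprop} gives a nonzero secondary term. Two further issues: under the coordinates of Proposition \ref{fundprop} (with $\abs{t}\geq 1$) it is the \emph{last} coefficient, with box dimension $c\abs{\lambda/t^3}$ in \eqref{grpact}, that degenerates, and the reducible slice is $\{d=0\}$ further sliced over $c$ --- a relabelling, but one you need to get right to recover $I_0^\sigma$ from \eqref{redtermint}; and since the proposition is an exact identity, the deep-cusp deficits must cancel exactly against part of the $d=0$ contribution (in the paper, the tails of \eqref{nonzerocontr} against the second term of \eqref{redintcontr}, via $\int_{\abs{r}<1}q^{2\epsilon(r)/3}\abs{r}^{-2/3}dr=1+q+q^2$), a cancellation your sketch never addresses. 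The paper sidesteps all of this by never counting in the short direction: it slices over the integer values $d\neq 0$ of the last coordinate (and over $c$ when $d=0$), where every remaining box dimension exceeds $1$ so count equals volume exactly, and the secondary term falls out of the exact evaluation of the lattice sum over $d$. If you wish to keep your direct-counting route, you must carry out the per-box threshold analysis for generic boxes and verify these cancellations; as written, your accounting would produce an incorrect secondary term.
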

\begin{proof}
    We evaluate \eqref{intcountcoords} by determining $\#\{x\in gB\cap V(R)\}$ depending on
    \begin{equation*}
        g = \lambda\begin{pmatrix}
            1 & f\\ 0 & 1
        \end{pmatrix}\begin{pmatrix}
            t & 0\\ 0 & t^{-1}
        \end{pmatrix}.
    \end{equation*}
    We begin by writing $B$ as the disjoint union 
    \begin{equation*}
        B = \bigcup_{i\leq i_0}\left(v_i + C\right),
    \end{equation*}
    for some $v_i\in K_\infty^4$ and $C=\{(x_1,x_2,x_3,x_4): \abs{x_i} < c=q^{c'}\}$, with $c'\in \mathbb{Z}$. Now, 
    \begin{equation}\label{grpact}
        gC = \begin{pmatrix}
            1 & f\\ 0 & 1
        \end{pmatrix} \big\{(x_1,x_2,x_3,x_4): \abs{x_1} < c\abs{\lambda t^3}, \,\,\abs{x_2} < c\abs{\lambda t},\,\, \abs{x_3} < c\abs{\lambda/ t}, \,\,\abs{x_4} < c\abs{\lambda/ t^{3}}\big\} =: \begin{pmatrix}
            1 & f\\ 0 & 1
        \end{pmatrix}C_{\lambda,t}.
    \end{equation}
    As the form $(a,b,c,d)$ is mapped to $(a+bf+cf^2+df^3,b+2cf+3df^2,c+3df,d)$, the right-hand side of \eqref{grpact} is just $C_{\lambda,t}$, as $\abs{f} < 1$ and $\abs{t}\geq 1$. We conclude that
    \begin{equation}\label{gBdecomp}
        gB = \bigcup_{i\leq i_0}\left(gv_i + C_{\lambda,t}\right).
    \end{equation}

We now use the slicing method from \cite[Section 6]{BST}, and slice over the last coordinate of the forms in $gB$. To obtain a precise expression, we split the slicing into two cases. The first case is when the last coordinate is nonzero. Then, we must have that $c\abs{\lambda /t^3} \gg 1$, which for large enough $X$ implies that $\abs{\lambda/t}$ is large, as $\abs{\lambda}^4 \asymp X$. Here we use that $g$ scales the last coordinate of $v_i$ with a factor $\lambda/t^3$. If the last coordinate in $gB$ is zero, then we extend the slicing method and slice over the third coordinate as well.

We now make the argument above explicit. We have that
\begin{equation}\label{firstSlice}
    \#\{x\in gB\cap V(R)\} = (q-1)\sum_{d\in R\setminus\{0\}}' \#\{x\in (gB)_d\cap R^3\} + (q-1)\sum_{c\in R\setminus\{0\}}' \#\{x\in (gB)_{c,0}\cap R^2\},
\end{equation}
where the $'$ denotes that the sum is restricted to monic polynomials, which is why the factor $(q-1)$ appears. This is justified by the $K$-invariance of $B$ as $K$ contains the matrices $\lambda I$ with $\lambda\in \fq^*$. Here, with $F\subseteq V(R)$ a collection of forms, $F_d$ denotes the set of elements $(a,b,c)$ such that $(a,b,c,d)\in F$, and similarly $F_{c,0}$ denotes the set of elements $(a,b)$ with $(a,b,c,0)\in F$. As $gB$ contains no singular forms, we see that $(gB)_{0,0}$ is empty.

The two sums above should be integrated according to \eqref{intcountcoords}. We begin by computing the integral over the second sum, as it will turn out to be the simplest of the two integrals. As $c\abs{\lambda t^3}\geq c\abs{\lambda t}\geq 1$, we have using Lemma \ref{latticeptlemma} that $\#\{x\in (gB)_{c,0}\cap R^2\} = \vol\big((gB)_{c,0}\big) = \abs{\lambda}^2\abs{t}^4\vol(B_{ct/\lambda,0})$. We then compute that
\begin{equation*}
    (q-1)\sum_{c\in R\setminus\{0\}}'\int_{ g\in \Lambda_X S }\eta(\abs{t}) \#\{x\in (gB)_{c,0}\cap R^2\}\abs{t}^{-3}\abs{\lambda}^{-1}dtd\lambda df = (q-1)\sum_{c\in R\setminus\{0\}}'\abs{\lambda_0}^2\int_{ \abs{t}\geq 1}\eta(\abs{t}) \vol(B_{ct/\lambda_0,0})\abs{t}dt,
\end{equation*}
where $\lambda_0$ is an arbitrary representative from $\Lambda_X$. Here, the integration over $t$ is only over $t$ with unit part congruent to $1$ modulo $P_\infty$, but we suppress this in the notation above. We need only consider $\lambda_0$ instead of $\lambda$ as $\abs{\lambda}$ is constant for all $\lambda \in \Lambda_X$, for a fixed $X$. We also made use of the fact that the integral over $\lambda$ and $f$ of $1$ is simply equal to $1$. 

Next, making the change of variables $u=ct/\lambda_0$ so that $dt = \abs{\lambda_0 c^{-1}}du$, we see that the above is equal to
\begin{equation}\label{sumcint}
    \sum_{c\in R\setminus\{0\}}'\abs{c}^{-2}\abs{\lambda_0}^4\int_{\abs{u}\geq \abs{c/\lambda_0} } \eta(\abs{u\lambda_0/c})\vol(B_{u,0})\abs{u}du = \abs{\lambda_0}^4\int_{\abs{u}\geq  \abs{1/\lambda_0} } \Bigg(\sum_\sumstack{c\in R\setminus\{0\}\\ \abs{c}\leq \abs{\lambda_0 u}}'\abs{c}^{-2}\eta(\abs{u\lambda_0/c})\Bigg)\vol(B_{u,0})\abs{u}du,
\end{equation}
where the factor $(q-1)$ was used to extend the integration over $u$ so that its unit part can be congruent modulo $P_\infty$ to any nonzero element in $\fq$.

We compute the sum over $c$. Write $\abs{u\lambda_0} = q^m$ with $m\geq 0$. Then, the sum is
\begin{equation*}
    \sum_\sumstack{c\in R\setminus\{0\}\\ \deg(c) \leq m}'\abs{c}^{-2}\eta(\abs{u\lambda_0/c})  = \frac{q^{-m}}{(q-1)(q+1)}+\frac{1}{q-1}\sum_\sumstack{c\in R\setminus\{0\}\\ \deg(c) \leq m-1}'\abs{c}^{-2},
\end{equation*}
which, by a geometric sum calculation, equals
\begin{equation*}
    \frac{q^{-m}}{(q-1)(q+1)}+\frac{1}{q-1}\sum_\sumstack{0\leq k\leq m-1} q^{-k} = \frac{q^{-m}}{(q-1)(q+1)}+q\cdot\frac{1-q^{-m}}{(q-1)^2} = \frac{1}{q-1}\left(\frac{q}{q-1} - \abs{u\lambda_0}^{-1}\frac{q^2+1}{q^2-1}\right).
\end{equation*}
The right-hand side in \eqref{sumcint} thus equals
\begin{equation}\label{redintcontr}
    \frac{q\abs{\lambda_0}^4}{(q-1)^2}I_0^\sigma- \frac{(q^2+1)\abs{\lambda_0}^3}{(q^2-1)(q-1)}\vol(B_0),
\end{equation}
where 
\begin{equation}\label{redtermint}
    I_0^\sigma = \int_u \vol(B_{u,0})\abs{u}du.
\end{equation}
Here, we made use of the fact that for $\abs{u} < \abs{1/\lambda_0}$, and large enough $\abs{\lambda_0}$, we have that $B_{u,0} = \emptyset$, as the indicator function of $B$ is uniformly continuous and $B_{0,0} = \emptyset$. We remark that if $\sigma = (3), (1^3)$, then $I_0^\sigma = 0$ as $B$ does not contain any point with last coordinate equal to zero in these cases.

We turn to the summation over nonzero $d$. Note that $\#\{x\in (gB)_d\cap R^3\} = \abs{\lambda}^3\abs{t}^3\vol(B_{dt^3/\lambda})$, whence the corresponding integral equals
\begin{equation*}
    (q-1)\sum_{d\in R\setminus\{0\}}'\abs{\lambda_0}^3\int_{ \abs{t}\geq 1}\eta(\abs{t}) \vol(B_{dt^3/\lambda_0})dt.
\end{equation*}
This integral is slightly more delicate than the one handled previously. We would like to make the change of variables $u = dt^3/\lambda_0$, but we note that while the map $t\mapsto t^3$ is bijective on $1+\pi\mathcal{O}_\infty$, it is not surjective on all of $\mathcal{O}_\infty^*$. Indeed, the valuation of $t^3$ is always divisible by three. To solve this issue, we split the summation over $d$ depending on the congruence class of the degree modulo $3$ and write
\begin{equation*}
    (q-1)\sum_{\epsilon = 0}^2\sum_\sumstack{d\in R\setminus\{0\}\\ \deg(d) \equiv^3 \epsilon}'\abs{\lambda_0}^3\int_{ \abs{t}\geq 1}\eta(\abs{t}) \vol(B_{dt^3/\lambda_0})dt,
\end{equation*}
where $\equiv^3$ denotes congruence modulo three. Writing $u=dt^3/\lambda_0$, we see that the condition $\abs{t}\geq 1$ means that $\abs{u}\geq \abs{d/\lambda_0}$, and that $\deg(d) \equiv^3\epsilon$ is equivalent to $-v_\infty(u\lambda_0) \equiv^3 \epsilon$. We also have the condition that the unit part of $u$ is congruent to $1$, but we may drop this condition by making use of the factor $(q-1)$ outside the integral. To finish the substitution, we also note that $dt = \abs{\lambda_0d^{-1}}^{1/3}\abs{u}^{-2/3}du$. 

From the discussion above, we see that the integral of the sum over $d\neq 0$ is 
\begin{equation*}
\begin{split}
    \sum_{\epsilon = 0}^2&\sum_\sumstack{d\in R\setminus\{0\}\\ \deg(d) \equiv^3 \epsilon}'\abs{\lambda_0}^{10/3}\int_\sumstack{\abs{u}\geq\abs{d/\lambda_0} \\ -v_\infty(u\lambda_0)\equiv^3 \epsilon} \eta(\abs{u\lambda_0/d}^{1/3})\abs{d}^{-1/3}\vol(B_u)\abs{u}^{-2/3}du
    \\&=\sum_{\epsilon = 0}^2\abs{\lambda_0}^{10/3}\int_\sumstack{\abs{u}\geq\abs{1/\lambda_0} \\ -v_\infty(u\lambda_0)\equiv^3 \epsilon} \Bigg(\sum_\sumstack{d\in R\setminus\{0\}\\ \deg(d) \equiv^3 \epsilon \\ \abs{d}\leq \abs{u\lambda_0}}'\eta\big(\abs{u\lambda_0/d}^{1/3}\big)\abs{d}^{-1/3}\Bigg)\vol(B_u)\abs{u}^{-2/3}du.
\end{split}
\end{equation*}
We now seek to evaluate the innermost sum. Let us first write $\abs{u\lambda_0} = q^{3k+\epsilon}$. The sum is then equal to
\begin{equation*}
    \frac{q^{2\epsilon/3}}{q-1}\left(\sum_{0\leq \ell\leq k-1}q^{2\ell}+\frac{q^{2k}}{q+1}\right) = \frac{q^{2\epsilon/3}}{q-1}\left(\frac{q^{2k}-1}{q^2-1}+\frac{q^{2k}}{q+1}\right) = \frac{\abs{u\lambda_0}^{2/3}q}{(q-1)(q^2-1)} -\frac{q^{2\epsilon/3}}{(q-1)(q^2-1)}.
\end{equation*}
Integrating these two terms yields
\begin{equation*}
\begin{split}
    \frac{q}{(q-1)(q^2-1)}\sum_{\epsilon = 0}^2\abs{\lambda_0}^{4}\int_\sumstack{\abs{u}\geq\abs{1/\lambda_0} \\ -v_\infty(u\lambda_0)\equiv^3 \epsilon} \vol(B_u)du - \frac{1}{(q-1)(q^2-1)}\sum_{\epsilon = 0}^2q^{2\epsilon/3}\abs{\lambda_0}^{10/3}\int_\sumstack{\abs{u}\geq\abs{1/\lambda_0} \\ -v_\infty(u\lambda_0)\equiv^3 \epsilon} \vol(B_u)\abs{u}^{-2/3}du,
\end{split}
\end{equation*}
which, after separating the tail integrals, becomes
\begin{equation}\label{nonzerocontr}
\begin{split}
    \frac{q}{(q-1)(q^2-1)}&\abs{\lambda_0}^{4}\left(\vol(B)-\int_\sumstack{\abs{u}<\abs{1/\lambda_0} } \vol(B_u)du \right) \\&- \frac{1}{(q-1)(q^2-1)}\abs{\lambda_0}^{10/3}\left(I^\sigma_1(\lambda_0)-\sum_{\epsilon = 0}^2q^{2\epsilon/3}\int_\sumstack{\abs{u}< \abs{1/\lambda_0} \\ -v_\infty(u\lambda_0)\equiv^3 \epsilon} \vol(B_u)\abs{u}^{-2/3}du\right),
\end{split}
\end{equation}
with 
\begin{equation}\label{sectermint}
    I^\sigma_1(\lambda_0) = \sum_{\epsilon = 0}^2q^{2\epsilon/3}\int_\sumstack{ -v_\infty(u\lambda_0)\equiv^3 \epsilon} \vol(B_u)\abs{u}^{-2/3}du
\end{equation}
depending only on $v_\infty(\lambda_0)$ modulo three and on $v_\sigma$.

The tail integrals above can be simplified. First, for $\abs{u}< \abs{1/\lambda_0}$ we have that $\vol(B_u) = \vol(B_0)$. Letting $\mu$ denote the measure obtained from $du$ we find that
\begin{equation*}
    -\int_\sumstack{\abs{u}<\abs{1/\lambda_0} } \vol(B_u)du = -\vol(B_0)\mu(\pi^{v_\infty(\lambda_0)+1}\mathcal{O}_\infty) = -\vol(B_0)q^{v_\infty(\lambda_0)}=-\vol(B_0)\abs{\lambda_0}^{-1}.
\end{equation*}
Let us write $\epsilon(u\lambda_0)$ for the representative in $\{0,1,2\}$ of the congruence class modulo three of $-v_\infty(u\lambda_0)$. Then,  making the change of variables $r=u\lambda_0$, the second tail integral above can be written as
\begin{equation}\label{belowepsdef}
   \vol(B_0)\int_\sumstack{\abs{u}< \abs{1/\lambda_0}} q^{2\epsilon(u\lambda_0)/3}\abs{u}^{-2/3}du = \vol(B_0)\abs{\lambda_0}^{-1/3}\int_{\abs{r} < 1} q^{2\epsilon(r)/3}\abs{r}^{-2/3}dr.
\end{equation}
Moreover,
\begin{equation*}
    \int_{\abs{r} < 1} q^{2\epsilon(r)/3}\abs{r}^{-2/3}dr = \sum_{\epsilon = 0}^2 q^{2\epsilon/3}\sum_{k=1}^\infty \int_{\abs{r} = q^{-3k+\epsilon}}\abs{r}^{-2/3}dr = \sum_{\epsilon = 0}^2 q^{2\epsilon/3}\sum_{k=1}^\infty q^{2k-2\epsilon/3}\mu(\pi^{3k-\epsilon}\mathcal{O}_\infty^*),
\end{equation*}
which equals
\begin{equation*}
    (q-1)\sum_{\epsilon = 0}^2 q^{2\epsilon/3}\sum_{k=1}^\infty q^{-k+\epsilon/3} = 1+q+q^2.
\end{equation*}
It follows that the second term from \eqref{redintcontr} cancels against the two tails terms from \eqref{nonzerocontr}, and this almost concludes the proof of the proposition. As the choice of $v_\sigma$, and thus $\abs{\lambda_0}$ depends on $\log_q(X)$ modulo $4$, it remains to prove that $I^\sigma_1(\lambda_0)$ is independent of this choice. We postpone the proof of this fact to the end of the section.  
\end{proof}
The term in Proposition \ref{implicitformcount} involving $I_0^\sigma$ corresponds, in a sense, to reducible forms. First, any point with last coordinate equal to zero is reducible, as such a form has a root. Furthermore, one may prove an analogue of \cite[Lemma 21]{BST} with almost identical proof, showing that the number of reducible points with nonzero last coordinate in $\mathcal{F}_Xv_\sigma$ is $\ll X^{3/4+\epsilon}$. Hence, if one is only interested in irreducible forms, then one need not consider the slice of $gB$ where $d=0$. Copying the proof of the above proposition, but bounding the tail integrals as $\ll \abs{\lambda_0}^3$, one finds the following result.
\begin{lemma}\label{irrformcount}
    Let $V(R)^{\sigma, \text{irr}}$ denote the irreducible forms in $V(R)^\sigma$. Then,
    \begin{equation*}
        N(V(R)^{\sigma, \text{irr}}; X) = \frac{\nu(K)}{\vol(G_0)}\left(\frac{q}{(q-1)(q^2-1)}\abs{\lambda_0}^{4}\vol(B)- \frac{1}{(q-1)(q^2-1)}\abs{\lambda_0}^{10/3}I^\sigma_1(\lambda_0)\right) + \mathcal{O}_\epsilon\left(X^{3/4+\epsilon}\right).
    \end{equation*}
\end{lemma}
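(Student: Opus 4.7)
The plan is to mimic the proof of Proposition \ref{implicitformcount}, with two changes that isolate the irreducible contribution. First, we dispense entirely with the $d=0$ slice of \eqref{firstSlice}, since any form $(a,b,c,0)\in V(R)$ has $y$ as a factor and is therefore reducible. Second, where the proof of Proposition \ref{implicitformcount} evaluates the ``tail'' integrals exactly through a cancellation with the $d=0$ contribution, here we simply bound them.

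Concretely, I would first invoke the function field analogue of \cite[Lemma 21]{BST} mentioned in the text preceding the statement: the number of reducible forms with nonzero last coefficient counted by $\#\{\mathcal{F}_Xv_\sigma\cap V(R)\}$ is $\ll X^{3/4+\epsilon}$. Together with the observation about $d=0$, this reduces the lemma to establishing
\begin{equation*}
N^{\neq 0}(V(R)^\sigma; X) = \frac{\nu(K)}{\vol(G_0)}\left(\frac{q}{(q-1)(q^2-1)}\abs{\lambda_0}^{4}\vol(B)-\frac{1}{(q-1)(q^2-1)}\abs{\lambda_0}^{10/3}I^\sigma_1(\lambda_0)\right)+\mathcal{O}\bigl(X^{3/4+\epsilon}\bigr),
\end{equation*}
where $N^{\neq 0}$ counts orbits of forms with $d\neq 0$, weighted as before by inverse stabiliser sizes. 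Retracing the treatment of the first sum in \eqref{firstSlice} in the proof of Proposition \ref{implicitformcount} verbatim yields exactly the expression \eqref{nonzerocontr} for $N^{\neq 0}(V(R)^\sigma;X)$, namely the desired main and secondary terms together with two tail integrals supported on $\abs{u}<\abs{1/\lambda_0}$.

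It thus remains to bound these tails. Since the indicator function of $B$ is uniformly continuous, $\vol(B_u)=\vol(B_0)$ throughout the relevant region once $X$ is sufficiently large, so the first tail integrates to $\vol(B_0)\abs{\lambda_0}^{-1}$ and contributes $\mathcal{O}(\abs{\lambda_0}^{3})=\mathcal{O}(X^{3/4})$ to $N^{\neq 0}$. For the second tail, the change of variables $r=u\lambda_0$ from \eqref{belowepsdef} together with the identity $\int_{\abs{r}<1}q^{2\epsilon(r)/3}\abs{r}^{-2/3}dr=1+q+q^2$ computed in the proof of Proposition \ref{implicitformcount} give a contribution bounded by a constant times $\abs{\lambda_0}^{10/3}\cdot\abs{\lambda_0}^{-1/3}=\abs{\lambda_0}^{3}\asymp X^{3/4}$. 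Combining these two $\mathcal{O}(X^{3/4})$ bounds with the $\mathcal{O}(X^{3/4+\epsilon})$ error from the reducibility estimate yields the stated formula. The only substantive obstacle is the reducibility bound; as the authors note, this admits a nearly identical proof to \cite[Lemma 21]{BST}, with the archimedean divisor-counting step replaced by its $\fq[T]$ analogue, so no new difficulty arises.
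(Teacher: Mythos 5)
Your proposal is correct and follows essentially the same route as the paper: drop the $d=0$ slice (those forms being reducible), invoke the function field analogue of \cite[Lemma 21]{BST} to absorb reducible forms with nonzero last coordinate into $\mathcal{O}(X^{3/4+\epsilon})$, and rerun the proof of Proposition \ref{implicitformcount} bounding the tail integrals by $\ll \abs{\lambda_0}^3 \asymp X^{3/4}$ instead of cancelling them. Your explicit tail estimates match the paper's computation, so no further changes are needed.
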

We will use this lemma later on when counting fields to avoid having to compute $I_0^\sigma$ explicitly.

\subsection{Volumes and automorphisms}\label{autch}
We now make Proposition \ref{implicitformcount} more explicit by computing the various volumes and integrals. First, as the Jacobian of the map $g\mapsto gv_\sigma$ is $\mathrm{Disc}(v_\sigma)$, and $G_0$ covers $B$ with multiplicity $\#(\mathrm{Aut}(\sigma))$ we see that $\vol(G_0) = \#(\mathrm{Aut}(\sigma))\vol(B)/\abs{\mathrm{Disc}(v_\sigma)}$. Recall also that $\abs{\lambda_0}^4\abs{\mathrm{Disc}(v_\sigma)} = X$.

We now compute $\nu(K)$, the measure of $K$ with respect to $dk = adc-cda$. Let $\Lambda'$ denote the set of $\lambda \in K_\infty$ with $\abs{\lambda} = 1$ and unit part congruent to $1$ modulo $P_\infty$. If $k\in K$ has the form
\begin{equation}\label{kform}
    \begin{pmatrix}
        a & c\alpha \\ c & a
    \end{pmatrix},
\end{equation}
we can consider the product map
\begin{equation}\label{productmap}
    \Lambda' \times K \to K_\infty^2\setminus\{(0,0)\},
\end{equation}
given by $\lambda k \mapsto (\lambda a, \lambda c)$. One sees that this map is injective and maps onto the set $ S = \{(a,c): \max\{\abs{a}, \abs{c}\} = 1\}$, and that the form corresponding to the form $\lambda\,d\lambda \wedge dk$ is $dx_1\wedge dx_2$. The measure of $\Lambda' \times K$ is $\nu(K)$, where we used that $\abs{\lambda}=1$, while the measure of $S$ is
\begin{equation*}
    2\int_{\abs{x_1}=1}\int_{\abs{x_2}\leq 1} dx_2dx_1-\int_{\abs{x_1}=1}\int_{\abs{x_2}= 1} dx_2dx_1 = 2(q-1)q-(q-1)(q-1) = q^2-1,
\end{equation*}
so that $\nu(K) = q^2-1$. Using this observation, we conclude that the expression in Proposition \ref{implicitformcount} can be rewritten as
\begin{equation}\label{stepformcount}
  \frac{q}{(q-1)\#\mathrm{Aut}(\sigma)}X- \frac{\abs{\lambda_0}^{10/3}}{(q-1)\vol(G_0)}I^\sigma_1(\lambda_0)+\frac{q(q+1)I_0^\sigma}{(q-1)\vol(B)\#\mathrm{Aut}(\sigma)}X.
\end{equation}

To evaluate $\#\mathrm{Aut}(\sigma)$, we should classify the various Étale extensions of $K_\infty$. These have the form
\begin{equation*}
    K_1 \oplus ... \oplus K_r,
\end{equation*}
where each $K_i$ is a finite extension of $K_\infty$ and where the various degrees add up to three. We thus see that to classify the cubic Étale extensions of $K_\infty$, we should classify the extensions of degree $1,2$ and $3$ of $K_\infty$.

Clearly, there is a unique extension of degree one, namely $K_\infty$ itself. For $\sigma$ representing the extension $K_\infty^3$ we clearly have $\#\mathrm{Aut}(\sigma) = 6$. Classifying the extensions of degree two is equivalent to studying $K_\infty^* / (K_\infty^*)^2$. Now, the decomposition $x=u\pi^n$ for elements $x\in K^*_\infty$ with $u\in \mathcal{O}_\infty^*$ shows that
\begin{equation}\label{Kinfdecomp}
    K^*_\infty \simeq \fq^*\times  \left( 1+\pi\mathcal{O}_\infty\right) \times \mathbb{Z},
\end{equation}
so that
\begin{equation*}
    (K_\infty^*)^2 \simeq (\fq^*)^2\times  \left( 1+\pi\mathcal{O}_\infty\right) \times 2\mathbb{Z},
\end{equation*}
where we used that $\left( 1+\pi\mathcal{O}_\infty\right) = \left( 1+\pi\mathcal{O}_\infty\right)^2$ by Hensel's lemma. Hence, the quotient is
\begin{equation}\label{squaredecomp}
    K_\infty^*/(K_\infty^*)^2 \simeq \frac{\fq^*}{(\fq^*)^2} \times \frac{\mathbb{Z}}{2\mathbb{Z}}.
\end{equation}
Letting $\alpha$ be a non-square in $\fq$ we therefore see that the quadratic field extensions of $K_\infty$, corresponding to the non-identity elements in \eqref{squaredecomp}, are $K_\infty(\sqrt{\alpha}), K_\infty(\sqrt{\pi})$ and $K_\infty(\sqrt{\alpha \pi})$. The first of these extensions is unramified, while the last two are totally ramified. For all three extensions, we have that $\#\mathrm{Aut}(\sigma) = 2$.

We turn to cubic field extensions of $K_\infty$. By general theory, there is a unique unramified extension, namely the splitting field of $X^{q^3}-X$ over $K_\infty$, or equivalently the compositum of $K_\infty$ with $\mathbb{F}_{q^3}$ (see \cite[p.173]{Neukirch}). The ramified extensions must be totally (tamely) ramified as $3$ is a prime. Then, by general considerations, these extensions are all obtained by appending the cube root of a uniformiser to $K_\infty$ so that they all have the form $K_\infty((u\pi)^{1/3})$, with $u \in \mathcal{O}_\infty^*$. By Hensel's lemma
\begin{equation*}
    \frac{\mathcal{O}_\infty^*}{\left(\mathcal{O}_\infty^*\right)^3} \simeq \frac{\fq^*}{(\fq^*)^3},
\end{equation*}
which has order one if $q\equiv^3 2$, and order three if $q \equiv^3 1$ (recall that $3\nmid q$). Hence, if $q\equiv^3 2$, then $K_\infty(\pi^{1/3})$ is the unique cubic ramified extension. On the other hand, if $q\equiv^3 1$, and $\beta \in \fq$ is a non-cube, the three non-isomorphic extensions are given by $K_\infty((\beta^i\pi)^{1/3})$ with $i=0,1,2$. When $q \equiv^3 2$, the extension is non-Galois, as $\fq$ contains no primitive third root of unity, and $\#\mathrm{Aut}(\sigma) = 1$. If instead $q\equiv^3 1$, then all three extensions are Galois with $\#\mathrm{Aut}(\sigma) = 3$.

\subsection{Computation of integrals}
Having classified the Étale extensions of $K_\infty$ of degree three, we turn to the evaluation of $I^\sigma_1(X)$ for various $\sigma$ and $X$. For a form $v\in V(K_\infty)$, we write $d(v)$ for the last coordinate of $v$. Then, we can write the integral defining $I_1^\sigma$ as 
\begin{equation*}
    I_1^\sigma(X) = \int_\sumstack{u} e^{2\epsilon(u\lambda_0)/3}\vol(B_u)\abs{u}^{-2/3}du = \int_{v\in B}q^{2\epsilon(d(v)\lambda_0)/3}\abs{d(v)}^{-2/3}dv = \frac{\abs{\mathrm{Disc}(v_\sigma)}}{\#\mathrm{Aut}(\sigma)}\int_{g\in G_0}q^{2\epsilon(d(gv_\sigma)\lambda_0)/3}\abs{d(gv_\sigma)}^{-2/3}dg. 
\end{equation*}
As $G_0$ is $K$-invariant, we may average over $K$ and write the above as
\begin{equation}\label{secaverage}
    \frac{\abs{\mathrm{Disc}(v_\sigma)}}{\#\mathrm{Aut}(\sigma)\nu(K)}\int_{g\in G_0}\int_{k\in K}q^{2\epsilon(d(kgv_\sigma)\lambda_0)/3}\abs{d(kgv_\sigma)}^{-2/3}dkdg.
\end{equation}
Recall that we defined the set $G_0$ somewhat explicitly previously. However, all our calculations remain true for any open, compact, left $K$-invariant set $G_0$, containing only matrices whose determinant has valuation equal to zero, as well as being invariant under multiplication with elements from $\mathrm{Stab}_{\GLtwo(K_\infty)}(v_\sigma)$ from the right. We show that by redefining the set $G_0$, the inner integral above becomes independent of $g$.

To find an appropriate set $G_0$, we consider the pairing
\begin{equation*}
    \langle x,y\rangle = x_1y_1-\alpha x_2y_2 \in K_\infty, 
\end{equation*}
where $x,y\in K_\infty^2$ and $\alpha\in \fq$ is a non-square. Note that $\langle x,x\rangle$ equals zero only if $x=0$. We can consider the subgroup of $\GLtwo(K_\infty)$ consisting of matrices $g$ such that $\abs{\det(g)} = 1$. By performing Gram-Schmidt orthogonalisation, one can bring such a $g$ into the form
\begin{equation*}
\begin{pmatrix}
    a & bc\alpha \\ c & ab
\end{pmatrix}
\begin{pmatrix}
    \gamma_1 & \beta_1 \\ 0 & \beta_2
\end{pmatrix},
\end{equation*}
where $a^2-\alpha c^2$ and $b^2\alpha(c^2\alpha -a^2)$ both have absolute value equal to one. In fact, by dividing $\gamma_1$ by $b$, multiplying the first column of the left-most matrix above by $b$, and changing variables, we can ensure that $b=1$. We then see that $\abs{\gamma_1\beta_2} = 1$. Recalling \eqref{Kinfdecomp} and that $(1+\pi\mathcal{O}_\infty)^2 = 1+\pi\mathcal{O}_\infty$, we see that after factoring an appropriate $\lambda$ with $\abs{\lambda} = 1$ from the matrix product, we may assume that $\gamma_1\beta_2 \in \{1,\alpha\}$ and that $a^2-\alpha c^2\in \fq$.

By decomposing the upper triangular matrix further, we see that we can write any $g$ as a product 
\begin{equation*}
     k\lambda\begin{pmatrix}
        t & 0\\ 0 &\alpha^i t^{-1}
    \end{pmatrix}\begin{pmatrix}
        1 & f\\ 0 &1
    \end{pmatrix} =: k\lambda a(t)n(f),
\end{equation*}
with $k\in K$ and $i\in\{0,1\}$. We now let $H$ be the set of all such products with $\abs{t} = \abs{\lambda} = 1$, $\abs{f} < 1$. We then define $G_0$ as the union of all $Hg_i$ where $g_i$ stabilises $v_\sigma$.

We now study \eqref{secaverage} and write $g\in G_0$ as $k\lambda a(t)n(f)g_i$. As $g_iv_\sigma = v_\sigma$ and $k\in K$, we can after a change of variables assume that $g = \lambda a(t)n(f)$. We also note that $\abs{d(\lambda v)} = \abs{\lambda}\abs{d(v)} = \abs{d(v)}$ if $\abs{\lambda} = 1$, and thus we may assume that $\lambda = 1$. 

Moreover, a computation shows that $d(gv)$ only depends on the bottom row of the matrix $g$ (and on $v$). This, together with \eqref{productmap}, and invariance under multiplication with $\lambda$ shows that instead of considering the inner integral in $\eqref{secaverage}$ we may study
\begin{equation*}
    \int_{\max\{\abs{x},\abs{y}\} = 1}q^{2\epsilon(d(h(x,y)gv_\sigma)\lambda_0)/3}\abs{d(h(x,y)gv_\sigma)}^{-2/3}dxdy,
\end{equation*}
where $h(x,y)$ is a matrix with bottom row equal to $(x,y)$.

We turn to invariance under multiplication by $a(t)$. One checks that the bottom row of $h(x,y)a(t)$ is given by $(tx, \alpha^i t^{-1}y)$. Making the change of variables $x' = tx$ and $y' = \alpha^i t^{-1}y$ leaves $dxdy$ invariant and shows invariance under multiplication with $a(t)$.

Finally, we note that the bottom row of $h(x,y)n(f)$ is $(x, y+fx)$. Note that $\max\{\abs{x}, \abs{y+fx}\} = \max\{\abs{x},\abs{y}\}$ as $\abs{f} < 1$. Making the change of variables $x'=x$ and $y' = y+fx$ leaves $dxdy$ invariant. This finally shows that \eqref{secaverage} is equal to
\begin{equation*}
\begin{split}
    \frac{\abs{\mathrm{Disc}(v_\sigma)}\vol(G_0)}{\#\mathrm{Aut}(\sigma)\nu(K)}&\int_{\max\{\abs{x},\abs{y}\} = 1}q^{2\epsilon(d(h(x,y)v_\sigma)\lambda_0)/3}\abs{d(h(x,y)v_\sigma)}^{-2/3}dxdy
    \\&=\frac{\abs{\mathrm{Disc}(v_\sigma)}\vol(G_0)}{\#\mathrm{Aut}(\sigma)\nu(K)}\int_{k\in K}q^{2\epsilon(d(kv_\sigma)\lambda_0)/3}\abs{d(kv_\sigma)}^{-2/3}dk.
\end{split}
\end{equation*}
We can thus conclude that the second term in \eqref{stepformcount} is equal to
\begin{equation}\label{tocmplres}
\begin{split}
    -\frac{\abs{\lambda_0}^{10/3}\abs{\mathrm{Disc}(v_\sigma)}}{\#\mathrm{Aut}(\sigma)\nu(K)(q-1)}&\int_{\max\{\abs{x},\abs{y}\} = 1}q^{2\epsilon(d(h(x,y)v_\sigma)\lambda_0)/3}\abs{d(h(x,y)v_\sigma)}^{-2/3}dxdy
    \\&=-\frac{X^{5/6}\abs{\mathrm{Disc}(v_\sigma)}^{1/6}}{\#\mathrm{Aut}(\sigma)(q^2-1)(q-1)}\int_{\max\{\abs{x},\abs{y}\} = 1}q^{2\epsilon(v_\sigma(x,y)\lambda_0)/3}\abs{v_\sigma(x,y)}^{-2/3}dxdy
\end{split}
\end{equation}
where we used that one obtains the last coordinate of a form by evaluating at $(0,1)$. We write $I_\sigma'(X)$ for the integral above including the factor $\abs{\mathrm{Disc}(v_\sigma)}^{1/6}/(q^2-1)$.

We have the following result.
\begin{prop}\label{secevalprop}
    Write $X = q^\ell$ for $\ell\geq 0$, with $\ell$ even unless $\sigma$ corresponds to the sum of $K_\infty$ and a ramified quadratic extension, in which case $\ell$ is odd. Then, the value of $I_\sigma'(X) =: C_2(\ell)$ is given by the following table:
\begin{center}
\begin{tabular}{ |c|c|c|c| } 
 \hline
 Type of $\sigma$ & $\ell \equiv^3 0$ & $\ell \equiv^3 1$ & $\ell \equiv^3 2$ \\ 
 \hline
 $(111)$ & $3q+1$ & $4q^{2/3}$ & $q^{1/3}(q+3)$ \\ 
 $(21)$ & $q+1$ & $2q^{2/3}$  & $q^{1/3}(q+1)$\\ 
 $(3)$ & $1$ & $q^{2/3}$  & $q^{4/3}$ \\
 $(1^21)$ & $2q^{1/2}$& $q^{1/6}(q+1)$  & $2q^{5/6}$ \\
 $(1^3)$
 & $q$ & $q^{2/3}$ & $q^{1/3}$ \\
 \hline
\end{tabular}
\end{center}
\end{prop}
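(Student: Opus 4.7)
The plan is a direct case-by-case computation, one for each of the five splitting types. By the paragraph preceding the proposition it suffices, after fixing an explicit representative $v_\sigma$, to evaluate
$$J_\sigma(v_0) := \int_{\max\{\abs{x},\abs{y}\}=1} q^{2\epsilon(v_\sigma(x,y)\lambda_0)/3}\abs{v_\sigma(x,y)}^{-2/3}\,dx\,dy,$$
and then multiply by $\abs{\mathrm{Disc}(v_\sigma)}^{1/6}/(q^2-1)$. The integrand $\psi(z):=q^{2\epsilon(z\lambda_0)/3}\abs{z}^{-2/3}$ is invariant under multiplication of $z$ by a unit of $\mathcal{O}_\infty$, and depends on $z$ only through the class of $v_\infty(z)+v_0$ modulo $3$, where $v_0:=v_\infty(\lambda_0)$. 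This will guarantee that the final answer $C_2(\ell)$ depends only on $\ell$ modulo $3$.

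The first step is to compute the key building block
$$S(v_0):=\int_{\abs{s}<1}\psi(s)\,ds.$$
Writing this as a sum over $v_\infty(s)=n\geq 1$ and evaluating the resulting geometric series of period $3$ yields $S(v_0) = 1+q+q^2$, $q^{1/3}(1+2q)$, or $q^{2/3}(2+q)$ according to whether $v_0\equiv 0, 1$, or $2\pmod 3$. Next, using the two disjoint charts $\{\abs{y}=1,\abs{x}\leq 1\}$ and $\{\abs{x}=1,\abs{y}<1\}$ with affine coordinates $s=x/y$ and $t=y/x$, together with the homogeneity identity $v_\sigma(x,y)=y^3 v_\sigma(s,1)$ (resp.\ $x^3 v_\sigma(1,t)$) and $\abs{y}=1$ (resp.\ $\abs{x}=1$) on the respective charts, one reduces $J_\sigma(v_0)$ to $(q-1)$ times
$$\int_{s\in\mathcal{O}_\infty}\psi(v_\sigma(s,1))\,ds + \int_{t\in\pi\mathcal{O}_\infty}\psi(v_\sigma(1,t))\,dt.$$

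The last step is to pick $v_\sigma$ for each splitting type and compute. For $(111)$ take $v_\sigma=xy(x-y)$; for $(21)$ take $v_\sigma=x(x^2-\alpha y^2)$ with $\alpha\in\fq$ a non-square; for $(3)$ take $v_\sigma=\prod_{i=1}^3(x-\omega_i y)$ with $\omega_i$ the conjugates of a generator of $\mathbb{F}_{q^3}/\fq$; for $(1^21)$ take $v_\sigma=x(x^2-\pi y^2)$; and for $(1^3)$ take $v_\sigma=x^3-\pi y^3$. In the first three cases $\abs{\mathrm{Disc}(v_\sigma)}=1$, while in the ramified cases one may need to replace $v_\sigma$ by a $\GLtwo(K_\infty)$-translate whose discriminant differs by $q^{\pm 2}$ in absolute value, in order to ensure $v_0\in\mathbb{Z}$. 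One then splits the integration into pieces where $v_\infty(v_\sigma)$ is constant: near a $K_\infty$-rational simple root of $v_\sigma$ one picks up a shifted copy of $S(v_0)$, while away from the roots (where $\abs{v_\sigma}$ is a fixed power of $q$) one picks up a multiple of the corresponding constant value of $\psi$. For $(21)$ the quadratic factor $x^2-\alpha y^2$ is nowhere zero modulo $P_\infty$ (as $\alpha$ is a non-residue), and for $(3)$ the cubic $v_\sigma$ is irreducible modulo $P_\infty$ over $\fq$, so in both of these cases $\abs{v_\sigma(x,y)}=1$ on the entire domain and the computation is particularly simple. Summing contributions and multiplying by $\abs{\mathrm{Disc}(v_\sigma)}^{1/6}/(q^2-1)$ yields the entries of the table.

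The main obstacle lies in the ramified cases $(1^21)$ and $(1^3)$, where $\log_q\abs{\mathrm{Disc}(v_\sigma)}$ is not divisible by $4$, so that one must choose between two inequivalent representatives (differing by $q^{\pm 2}$ in $\abs{\mathrm{Disc}}$) according to the class of $\ell$ modulo $4$, in order that $v_0$ be an integer. Verifying directly that both admissible choices produce the same value of $C_2(\ell)$ requires redoing the integration with forms whose coefficients are not all in $\mathcal{O}_\infty$; this same calculation simultaneously establishes the claim, deferred at the end of the proof of Proposition \ref{implicitformcount}, that $I^\sigma_1(\lambda_0)$ is independent of the choice of representative.
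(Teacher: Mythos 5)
Your overall strategy --- fixing an explicit representative for each splitting type, reducing the integral over $\max\{\abs{x},\abs{y}\}=1$ to two affine charts with a factor $(q-1)$, and evaluating the resulting one-variable integrals through the geometric-series quantity $S(v_0)$ --- is essentially the paper's computation, and your values of $S(v_0)$ and your treatment of the types $(111)$, $(3)$, $(1^21)$ and $(1^3)$ are consistent with it. There is, however, a concrete error in the $(21)$ case: from the fact that $x^2-\alpha y^2$ is a unit on the domain you conclude that $\abs{v_\sigma(x,y)}=1$ on the entire domain, but $v_\sigma=x(x^2-\alpha y^2)$ also contains the linear factor $x$, which vanishes at the $K_\infty$-rational point $[0:1]$; on the chart $\abs{y}=1$ one has $\abs{v_\sigma}=\abs{x}$, which is not identically $1$. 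Following your claim would give $I_\sigma'(X)=q^{2\epsilon(\lambda_0)/3}$, i.e.\ the $(3)$-row values $1$, $q^{2/3}$, $q^{4/3}$, rather than the asserted $q+1$, $2q^{2/3}$, $q^{1/3}(q+1)$. The fix is to apply your own general prescription to this simple rational root: it contributes a shifted copy of $S(v_0)$, exactly as in the paper, whose $(21)$ integral is $q(q-1)q^{2\epsilon(\lambda_0)/3}+(q-1)^2\sum_{j\geq 1}q^{2\epsilon(\pi^j\lambda_0)/3}q^{-j/3}$.

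A second, structural gap: you localise the two-representative (modulo $4$) issue to the ramified types, on the grounds that there $\log_q\abs{\mathrm{Disc}(v_\sigma)}$ is not divisible by $4$. What actually matters is whether $\ell+\log_q\abs{\mathrm{Disc}(v_\sigma)}\equiv 0 \nsmod{4}$, since $\abs{\lambda_0}^4\abs{\mathrm{Disc}(v_\sigma)}=X$; this fails for half of the admissible $\ell$ for every $\sigma$, including the unramified ones (for instance $(111)$ with $\ell\equiv 2\nsmod{4}$). So your plan as written does not cover the unramified types for $\ell\equiv 2\nsmod{4}$, where $v_0$ is again not an integer for the discriminant-one representative. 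The paper disposes of this uniformly at the start of its proof by a change of variables showing $I_\sigma'(q^{\ell+2})=I_\sigma'(q^{\ell-4})$ and noting $\ell+2\equiv\ell-4\nsmod{3}$, which simultaneously discharges the claim deferred from Proposition \ref{implicitformcount}; your direct verification with the second representative would also work, but it must be carried out for all five types, not only for $(1^21)$ and $(1^3)$.
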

\begin{proof}
    We first argue that $I_\sigma'(q^\ell)$ only depends on $\ell$ modulo $3$ and not modulo $4$. It is apparent that such an invariance holds for the integral for a fixed choice of $v_\sigma$, but recall that our choice of $v_\sigma$ depends on $\ell$ modulo $4$. 

    Suppose that for a fixed choice of $v_\sigma$, we have evaluated $I'_\sigma(q^\ell)$ for all $\ell$ such that the equation $q^\ell = \abs{\lambda_0}^4\abs{\mathrm{Disc}(v_\sigma)}$ is solvable for $\lambda_0$, i.e. $\ell$ lying in a certain congruence class modulo $4$ depending on $\abs{\mathrm{Disc}(v_\sigma)}$. We show how to bootstrap such a result to also evaluate $I_\sigma'(q^{\ell'})$ for $\ell' = \ell + 2$. 
    
    Write $q^{\ell'} = \abs{\lambda_0}^4\abs{\mathrm{Disc}(v_\sigma')}$ with
    \begin{equation*}
        v'_\sigma = \begin{pmatrix}
            \pi^{-1} & 0\\ 0 & 1
        \end{pmatrix}v_\sigma.
    \end{equation*}
We then study $(q^2-1)I'_\sigma(q^{\ell'})$, i.e.
    \begin{equation*}
    \begin{split}
        \abs{\mathrm{Disc}(v'_\sigma)}^{1/6}&\int_{\max\{\abs{x},\abs{y}\} = 1}q^{2\epsilon(v'_\sigma(x,y)\lambda_0)/3}\abs{v'_\sigma(x,y)}^{-2/3}dxdy
        \\&=q\abs{\mathrm{Disc}(v_\sigma)}^{1/6}\int_{\max\{\abs{x},\abs{y}\} = 1}q^{2\epsilon(\pi v_\sigma(\pi^{-1}x,y)\lambda_0)/3}\abs{v_\sigma(\pi^{-1}x,y)}^{-2/3}dxdy.
    \end{split}
    \end{equation*}
    We make the change of variables $x\mapsto \pi x$ and $y\mapsto y$ and rewrite the above as
    \begin{equation*}
        \abs{\mathrm{Disc}(v_\sigma)}^{1/6}\int_{\max\{\abs{\pi x},\abs{y}\} = 1}q^{2\epsilon(\pi v_\sigma(x,y)\lambda_0)/3}\abs{v_\sigma(x,y)}^{-2/3}dxdy.
    \end{equation*}
    We now split the integral into three regions. A first part where $\abs{y} = 1$ and $\abs{x}\leq 1$, a second where $\abs{y} \leq q^{-1}$ and $\abs{x} = q$ and a final one where $\abs{y} = 1$ and $\abs{x} = q$. In the second and third regions, we make the change of variables $x\mapsto \pi^{-1} x$, $y\mapsto \pi^{-1} y$. As $v_\sigma(\pi x,\pi y) = \pi^3v_\sigma(x,y)$ and $\epsilon$ is $3$-periodic with respect to the valuation of the argument, we end up with
    \begin{equation*}
        \abs{\mathrm{Disc}(v_\sigma)}^{1/6}\int_{\max\{\abs{x},\abs{y}\} = 1}q^{2\epsilon(\pi v_\sigma(x,y)\lambda_0)/3}\abs{v_\sigma(x,y)}^{-2/3}dxdy,
    \end{equation*}
    i.e. $(q^2-1)I'_\sigma(q^{\ell-4})$, by viewing $\pi\lambda_0$ as the new $\lambda_0$. Now, we simply note that $\ell-4 \equiv^3 \ell+2$.

    We now turn to the explicit evaluation of $I'_\sigma(q^\ell)$. When $\sigma$ corresponds to an unramified splitting type, we perform this evaluation for $\ell$ divisible by $4$. We begin with the totally split case $(111)$. As we saw before, there is a unique such Étale extension, and by studying the multiplication laws in the Levi--Delone--Faddeev correspondence, one checks that the form $v_\sigma = x(x+y)y$ corresponds to such an extension. The discriminant has absolute value equal to one, so that if $\ell = 4m$ we have $\abs{\lambda_0} = q^m$.

    Using symmetry, we rewrite $(q^2-1)I'_\sigma(q^\ell)$ as 
    \begin{equation}\label{totsplitint}
        2\int_{\abs{x} = 1}\int_{\abs{y} \leq  q^{-1}}q^{2\epsilon(x(x+y)y\lambda_0)/3}\abs{x(x+y)y}^{-2/3}dxdy + \int_{\abs{x} = 1}\int_{\abs{y} = 1}q^{2\epsilon(x(x+y)y\lambda_0)/3}\abs{x(x+y)y}^{-2/3}dxdy.
    \end{equation}
    In the first of these integrals $\abs{x(x+y)y} = \abs{y}$ as $\abs{x} = 1 > \abs{y}$. Hence, we obtain a contribution
    \begin{equation*}
        2(q-1)^2\int_{\abs{y} \leq q^{-1}}q^{2\epsilon(y\lambda_0)/3}\abs{y}^{-2/3}dy = 2(q-1)^2\sum_{k=1}^\infty q^{2\epsilon(\pi^k\lambda_0)/3}q^{-k/3}.
    \end{equation*}
    Separating depending on $k$ modulo three yields
    \begin{equation*}
    2(q-1)^2\sum_{i=1}^3 q^{2\epsilon(\pi^i\lambda_0)/3}q^{-i/3} \sum_{k=0}^\infty q^{-k} = 2q(q-1) \sum_{i=1}^3 q^{2\epsilon(\pi^i\lambda_0)/3}q^{-i/3}.
    \end{equation*}
    The second integral from \eqref{totsplitint} is
    \begin{equation*}
    \begin{split}
        \int_{\abs{x} = 1}\int_{\abs{y} = 1}q^{2\epsilon((x+y)\lambda_0)/3}\abs{(x+y)}^{-2/3}dxdy &= \int_{\abs{x} = 1}\sum_{k=1}^\infty \int_{y: \abs{y+x} = q^{-k}}q^{2\epsilon(\pi^k\lambda_0)/3}q^{2k/3}dxdy
        \\  + \int_{\abs{x} = 1}\int_{y: \abs{y+x} = 1, \abs{y} = 1}q^{2\epsilon(\lambda_0)/3}dxdy &= (q-1)^2\sum_{k=1}^\infty q^{2\epsilon(\pi^k\lambda_0)/3}q^{-k/3} + (q-1)(q-2)q^{2\epsilon(\lambda_0)/3}
        \\&= q(q-1)\sum_{i=1}^3 q^{2\epsilon(\pi^i\lambda_0)/3}q^{-i/3} +(q-1)(q-2)q^{2\epsilon(\lambda_0)/3}.
    \end{split}
    \end{equation*}

    Recalling the definition of the $\epsilon$-function above \eqref{belowepsdef}, one sees that
    \begin{equation}\label{firstcassum}
        \sum_{i=1}^3 q^{2\epsilon(\pi^{i-m})/3}q^{-i/3}=\sum_{i=1}^3 q^{2\epsilon(\pi^{i-\ell})/3}q^{-i/3} = \begin{cases}
            q+1+q^{-1}, \,\, &\ell\equiv^3 0 \\
            q^{-1/3} + q^{2/3}+ q^{-1/3},   \,\, &\ell\equiv^3 1,\\
            q^{1/3}+q^{-2/3}+q^{1/3},\,\, &\ell\equiv^3 2.
        \end{cases} = \begin{cases}
            q^{-1}(1+q+q^2), \,\, &\ell\equiv^3 0 \\
            q^{-1/3}(q+2),   \,\, &\ell\equiv^3 1\\
            q^{-2/3}(2q+1),\,\, &\ell\equiv^3 2.
        \end{cases}
    \end{equation}
    Furthermore,
    \begin{equation}\label{seccassum}
        q^{2\epsilon(\lambda_0)/3} = \begin{cases}
            1, \,\, &\ell\equiv^3 0 \\
            q^{2/3},\,\, &\ell\equiv^3 1, \\
            q^{4/3}   \,\, &\ell\equiv^3 2.
        \end{cases}
    \end{equation}
    This proves Proposition \ref{secevalprop} for $\sigma$ with splitting type $(111)$.

    We turn to the unramified quadratic case, $K_\infty \oplus K_\infty(\sqrt{\alpha})$, corresponding to the splitting type $(21)$. Then, one checks that the form $x(x^2-\alpha y^2)$ lies in the orbit corresponding to $\sigma$. We note that if the maximum of $\abs{x}$ and $\abs{y}$ is $1$, then $\abs{x^2-\alpha y^2} = 1$ so that the integral
    \begin{equation*}
        \int_{\max\{\abs{x},\abs{y}\} = 1}q^{2\epsilon(x\lambda_0)/3}\abs{x}^{-2/3}dxdy = q(q-1)q^{2\epsilon(\lambda_0)/3} + (q-1)^2\sum_{j=1}^\infty q^{2\epsilon(\pi^j\lambda_0)/3}q^{-j/3}.
    \end{equation*}
    By our earlier calculations, this is equal to
    \begin{equation*}
        q(q-1)q^{2\epsilon(\lambda_0)/3} + q(q-1)\sum_{i=1}^3 q^{2\epsilon(\pi^i\lambda_0)/3}q^{-i/3},
    \end{equation*}
    which proves the proposition for the splitting type $(21)$, using \eqref{firstcassum} and \eqref{seccassum}.

    We now turn to the case when $\sigma$ corresponds to the splitting type $(3)$, i.e. $\sigma$ corresponds to the unramified cubic extension of $K_\infty$. Then, if we let $x^3+cx+d$ be an irreducible polynomial over $\fq$, the form $v_\sigma = x^3+cxy^2+dy^3$ lies in $V(R)^\sigma$. As $x^3+cx+d$ is irreducible, one sees that $\abs{v_\sigma(x,y)} = 1$ in the set we are integrating over. We then have that
    \begin{equation*}
        q^{2\epsilon(\lambda_0)/3}\int_{\max\{\abs{x},\abs{y}\} = 1}1dxdy = q^{2\epsilon(\lambda_0)/3}(q^2-1),
    \end{equation*}
    which finishes the case $(3)$.

    Finally, we turn to the ramified cases. First, we claim that the two different orbits corresponding to the splitting type $(1^21)$ are represented by the forms $x(x^2-\alpha^r \pi y^2)$, with $r \in \{0,1\}$. As usual, this is quickly confirmed using the multiplication laws of the corresponding ring.

    We see that $\abs{x^2-\alpha^r \pi y^2} = \max\{\abs{x}^2,q^{-1}\abs{y}^2\}$, i.e. $1$ if $\abs{x}= 1$ and $\abs{y}\leq 1$, and $q^{-1}$ if $\abs{x} < 1$ and $\abs{y}=1$. Hence, the integral we should evaluate is
    \begin{equation*}
    \begin{split}
        \int_{\abs{x} = 1, \abs{y} \leq 1}&q^{2\epsilon(\lambda_0)/3}dxdy + q^{2/3}\int_{\abs{x} < 1, \abs{y} = 1}q^{2\epsilon(x\pi \lambda_0)/3}\abs{x}^{-2/3}dxdy
        =q^{2\epsilon(\lambda_0)/3}q(q-1)
        +q^{2/3}(q-1)^2\sum_{j=1}^\infty q^{2\epsilon(\pi^j\pi \lambda_0)/3}q^{-j/3} 
        \\=\,&  q^{2\epsilon(\lambda_0)/3}q(q-1)+q^2(q-1)\sum_{k=2}^4q^{2\epsilon(\pi^k \lambda_0)/3}q^{-k/3}
        \\=\,&q(q-1)\left(2q^{2\epsilon(\lambda_0)/3} + q^{2\epsilon(\pi\lambda_0)/3}q^{-1/3}+q^{2\epsilon(\pi^2\lambda_0)/3}q^{1/3}\right).
    \end{split}
    \end{equation*}
    Now, $\abs{\mathrm{Disc}(x(x^2-\alpha^r \pi y^2))} = \abs{\pi}^3 = q^{-3}$. Thus, if $\ell = 4m+1$, we see that $\abs{\lambda_0} = q^{m+1}$. In particular, $-v_\infty(\lambda_0) = m+1 \equiv^3 \ell$. To finish the partially ramified case, we compute
    \begin{equation*}
        2q^{2\epsilon(\lambda_0)/3} + q^{2\epsilon(\pi\lambda_0)/3}q^{-1/3}+q^{2\epsilon(\pi^2\lambda_0)/3}q^{1/3} = \begin{cases}
            2(q+1), &\ell \equiv^3 0, \\
            q^{-1/3}(q+1)^2, &\ell \equiv^3 1, \\
            2q^{1/3}(q+1), &\ell \equiv^3 2.
        \end{cases}
    \end{equation*}

    The last case to consider is the totally ramified case. Then, the orbits are represented by the forms of types $x^3-\beta^i\pi y^3$, $i\in\{0,1,2\}$ and with $\beta$ a non-cube in $\fq$. In the case that $\fq$ contains no non-cube, we consider the above form only with $i=0$. The absolute value of the discriminant of these forms is $\abs{\pi^2}= q^{-2}$. 
    
    Note that $\abs{x^3-\beta^i\pi y^3} = 1$ if $\abs{x} = 1$, and else it equals $q^{-1}$. Hence, the integral of interest is
    \begin{equation*}
        \int_{\abs{x} = 1, \abs{y} \leq 1}q^{2\epsilon(\lambda_0)/3}dxdy + q^{2/3}\int_{\abs{x} < 1, \abs{y} = 1}q^{2\epsilon(\pi\lambda_0)/3}dxdy = (q-1)\left(q^{2\epsilon(\lambda_0)/3}q+q^{2/3}q^{2\epsilon(\pi\lambda_0)/3}\right).
    \end{equation*}
        Now, if $\ell = 4m+2$, we have that $\abs{\lambda_0} = q^{m+1}$ so that $-v_\infty(\lambda_0) \equiv^3 m+1 \equiv^3 \ell - 1$. Hence, the above integral is equal to
    \begin{equation*}
        (q^2-1)\cdot
        \begin{cases}
            q, &-v_\infty(\lambda_0) \equiv^3 0, \text{ i.e. } \ell \equiv^3 1\\
            q^{2/3},&-v_\infty(\lambda_0) \equiv^3 1, \text{ i.e. } \ell \equiv^3 2\\
            q^{4/3},&-v_\infty(\lambda_0) \equiv^3 2 \text{ i.e. } \ell \equiv^3 0.
        \end{cases}
    \end{equation*}
    Multiplying by the sixth root $\abs{\mathrm{Disc}(v_\sigma)}^{1/6}=q^{-1/3}$ finishes the proof.
\end{proof}

Combining Proposition \ref{implicitformcount}, Proposition \ref{secevalprop}, Lemma \ref{irrformcount}, \eqref{stepformcount} and \eqref{tocmplres}, we see that we have proven Theorem \ref{explicitformcount}.
\section{Sieving for maximality}\label{fieldchapter}
We now show how to obtain asymptotic formulas for the number of maximal cubic forms by employing a certain discriminant-reducing sieve, used in \cite[Section 9]{BST}. By taking advantage of the function field setting, as well as precise results for the evaluation of various Fourier transforms, cf.~\cite{BTT}, we obtain an error term with the same quality as the best result for number fields, while keeping our argument mostly elementary.

The starting point of the calculations is the following simple inclusion-exclusion sieve. Let $U$ denote the set of all $R$-integral maximal binary cubic forms, and let $W_F$ denote the set of forms in $V(R)$ which are nonmaximal at every prime dividing the monic squarefree polynomial $F$. Letting $\mu$ denote the Möbius function, we have that
\begin{equation*}
    N(U\cap V(R)^\sigma; X) = \sum_{F}\mu(F)N\left(W_F\cap V(R)^\sigma; X\right),
\end{equation*}
by the inclusion-exclusion principle, where the sum runs over monic squarefree polynomials. When $\abs{F}$ is large, we estimate the summand above using the bound
\begin{equation}\label{cutoffBound}
    N\left(W_F\cap V(R)^\sigma; X\right) \ll \frac{X}{\abs{F}^{2-\epsilon}},
\end{equation}
proven in the same way as \cite[Lemma 34]{BST}. This also shows the absolute convergence of the above sum. Using this bound, we may estimate
\begin{equation}\label{cutoffFieldSum}
    N(U\cap V(R)^\sigma; X) = \sum_{\abs{F}\leq X^\delta}\mu(F)N\left(W_F\cap V(R)^\sigma; X\right) + \mathcal{O}\left(X^{1-\delta+\epsilon}\right), 
\end{equation}
for any $0 <\delta < 1$.

To handle $F$ of smaller degree, we rewrite 
\begin{equation*}
    N\left(W_F\cap V(R)^\sigma; X\right) = \sum_\sumstack{ fg \mid F \\ \alpha \in \mathbb{P}^1(R/fgR)} \mu(g)N\left(V_{fg,\alpha}; X \abs{f}^2/\abs{F}^4\right),
\end{equation*}
see \cite[Proposition 33, Eq. (70)]{BST} and \cite[Lemma 6.2]{SST}. Here $V_{fg,\alpha}$ denotes the forms in $V(R)$ whose reduction modulo every $P\mid fg$ has a root at $\alpha$. One proves this by using Lemmas \ref{subringformlemma} and \ref{overringlemma} to count pairs $(A,A')$ of cubic rings $A,A'$ with $A'$ being a $P$-overring of $A$. If we let $\omega_{fg}(x)$ denote the number of roots of the form $x$ modulo $fg$, then we see that the above is equal to
\begin{equation*}
     \sum_{fg\mid F}\mu(g)\sum_\sumstack{x\in V(R) \\ \abs{\mathrm{Disc}(x)} = X\abs{f}^2/\abs{F}^4} \omega_{fg}(x).
\end{equation*}
By a slight modification of the computations proving \eqref{intcountcoords}, we find that the innermost of these sums is equal to
\begin{equation}\label{rootcountint}
    \frac{\nu(K)}{\vol(G_0)}\int_{g\in \Lambda_{X\abs{f}^2/\abs{F}^4}S}\eta(\abs{t}) \left(\sum_{x\in gB \cap V(R)}\omega_{fg}(x)\right)\abs{t}^{-3}\abs{\lambda}^{-1}dtd\lambda df.
\end{equation}
\subsection{Fourier analysis and geometry of numbers}

We study the integrand in \eqref{rootcountint} by using finite Fourier analysis. For this, we will need an exact formula for the Fourier transform of $\omega_{fg}$, obtained in \cite{TTOrb}. Our argument is quite similar to \cite[Section 7]{BTT}, but we avoid using the theory of Shintani zeta functions.

Let $P$ be a prime polynomial. The character group of $(R/PR)^4$ can be parametrised by binary cubic forms $y = (y_1, y_2,y_3,y_4)$ with coefficients satisfying $\deg(y_i) < \deg P$. Indeed, let us first define $\chi_\infty: K_\infty \to \fq$ by 
\begin{equation}\label{charnewpar}
\sum_{n=-N}^\infty a_n\pi^n \mapsto \exp\left(-\frac{2\pi i}{p}\mathrm{Tr}_{\fq/\mathbb{F}_p}\left(a_{1}\right)\right),
\end{equation}
with $p\mid q$, see \cite[Eq. (2.1)]{Frankenhuijsen}. We then set 
\begin{equation*}
    \chi_y(x) = \chi_\infty\left(\frac{[x,y]}{P}\right)
\end{equation*}
where $[x,y] = x_1y_1+x_2y_2/3+x_3y_3/3+x_4y_4$ is the bilinear form from \cite[Eq. (12)]{TTOrb}. This is slightly different from the parametrisation used in \cite{TTOrb}, but it has nice properties that we use when bounding the error term. Note that $\chi_y(x)$ splits into a product $\chi_{y_1}(x_1)\chi_{y_2}(x_2)\chi_{y_3}(x_3)\chi_{y_4}(x_4)$ where the $\chi_{y_i}$ are characters on $R/PR$.

The function $h\mapsto \widehat{\omega}_h$ is multiplicative, where the circumflex denotes Fourier transform. Furthermore, by \cite[Proposition 1]{TTOrb}, whose proof goes through without changes even with our parametrisation, we have that
\begin{equation}\label{rootFouriereval}
    \widehat{\omega}_P(y) := \widehat{\omega}_P(\chi_y) = \begin{cases}
        1 + \abs{P}^{-1}, &\text{ if $y = 0$ modulo $P$},\\
        \abs{P}^{-1}, &\text{ if $y \neq 0$ has a triple root modulo $P$},
        \\0,  &\text{ otherwise.}
    \end{cases}
\end{equation}
In particular, we need only consider $y$ with at least a triple root modulo all $P\mid fg$.

We now decompose $gB$ into unions of boxes as in \eqref{gBdecomp}. Writing $x^{(i)} = gv_i$ in the notation from \eqref{gBdecomp} we have that
\begin{equation*}
    gB = \bigcup_{i\leq i_0} \left(x^{(i)} + C_{\lambda,t}\right).
\end{equation*}
By Fourier inversion, we obtain
\begin{equation*}
    \sum_{x\in gB \cap V(R)}\omega_{fg}(x) = \sum_{i\leq i_0}\sum_{y=(y_1,y_2,y_3,y_4) \,\,\mathrm{ mod} fg}\chi_y(-x^{(i)})\widehat{\omega}_{fg}(y)\sum_{x\in C_{\lambda,t}}\chi_y(x).
\end{equation*}
Let us separate the modulus $fg$ into two parts and write $r\mid fg$, with $r$ chosen such that $y$ is zero modulo $fg/r$, but nonzero modulo any prime dividing $r$ while also having a triple root modulo any such prime. We say that $y$ is of type $r$. Then, by \eqref{rootFouriereval},
\begin{equation*}
    \widehat{\omega}_{fg}(y) = \abs{r}^{-1}\prod_{P\mid fg/r}\left(1+\abs{P}^{-1}\right) = \frac{\sigma(fg/r)}{\abs{fg/r}\abs{r}} = \frac{\sigma(fg)}{\abs{fg}\sigma(r)},
\end{equation*}
where $\sigma(h)$ is the polynomial sum-of-divisors function.

Thus, we obtain
\begin{equation}\label{rootsum}
    \sum_{x\in gB \cap V(R)}\omega_{fg}(x) = \frac{\sigma(fg)}{\abs{fg}}\sum_{r\mid fg}\sigma(r)^{-1}\sum_{i\leq i_0}\sum_\sumstack{y \,\,\mathrm{ mod}\,\, r \\ y  \text{ of type $r$}}\chi_y(-x^{(i)})\sum_{x\in C_{\lambda,t}}\chi_y(x).
\end{equation}
Recall that $C_{\lambda,t}$ is a box whose side lengths depend on $\abs{\lambda}$ and $\abs{t}$. In particular, the first coordinate of any $x\in C_{\lambda,t}$ satisfies $\abs{x_1} < c\abs{\lambda t^3}$. To study the expression above, we split into cases depending on the size of $c\abs{\lambda t^3}$.

Let us first assume that $c\abs{\lambda t^3} \geq \abs{r}$. Then, when summing over $x$, we sum over a full set of representatives of $x_1$ modulo $r$. In particular, the innermost sum is zero unless $y_1 = 0$. Now, $y$ has a triple root modulo $r$, which means that we must also have that $y_2 = y_3 = 0$. Recalling that $C_{\lambda,t}$ is a box, and writing $\chi_y = \chi_{y_1}\chi_{y_2}\chi_{y_3}\chi_{y_4}$, we see that
\begin{equation*}
    \sum_{x\in C_{\lambda,t}}\chi_y(x) = \#\{(x_1,x_2,x_3)\in R^3: (x_1,x_2,x_3,0)\in C_{\lambda,t}\}\sum_{\abs{x_4} < c\abs{\lambda/t^3}}\chi_{y_4}(x_4).
\end{equation*}
We are left with
\begin{equation*}
 \#\{(x_1,x_2,x_3)\in R^3: (x_1,x_2,x_3,0)\in C_{\lambda,t}\}\frac{\sigma(fg)}{\abs{fg}}\sum_{r\mid fg}\sigma(r)^{-1}\sum_{i\leq i_0}\sum_\sumstack{y_4 \,\,\mathrm{ mod}\,\, r \\ y  \text{ of type $r$}}\chi_{y_4}(-x^{(i)})\sum_{\abs{x_4} < c\abs{\lambda/t^3}}\chi_{y_4}(x_4).
\end{equation*}
At this point, we may apply Möbius inversion to rewrite the expression above, excluding the prefactors, as
\begin{equation*}
    \sum_{r\mid fg}\sigma(r)^{-1}\mu(r)\sum_{r'\mid r}\mu(r')\sum_{i\leq i_0}\sum_\sumstack{y_4 \,\,\mathrm{ mod}\,\, r' }\chi_{y_4}(-x^{(i)})\sum_{\abs{x_4} < c\abs{\lambda/t^3}}\chi_{y_4}(x_4).
\end{equation*}
Note that the set of $x_4$ satisfying the condition in the innermost sum is an additive group, say $G$, that projects down to $R/r'R$. Hence, we need only consider the contribution from $y_4$ such that $\chi_{y_4}$ is trivial on the projection of $G$. The set of such $y_4$ is a subgroup of $R/r'R$ of size $\abs{r'}/\abs{G}$, assuming $\abs{G} \leq \abs{r'}$, else the size is $1$. In fact, this is immediate from our definition of $\chi_{y_4}$ as one can see that the set of such $y_4$ is in fact simply those satisfying $\abs{y_4} < c^{-1}\abs{r' t^3/\lambda}$ (when $c\abs{\lambda/t^3}\geq 1$). Indeed, the easiest way to see this is to note that $\chi_{y_4}(x_4)$ equalling $1$ for every $x_4$ of degree zero means that $\abs{y_4} < \abs{r}q^{-1}$, as $\chi_\infty$ only picks out the coefficient associated to $T^{-1}$. Once this is shown, one iterates and arrives at the result.

Let us now consider the two innermost sums for a fixed $i$. We can split $-x^{(i)}$ into one component lying in $G$ and one component which either lies outside of $G$, or is zero. If the second component is nonzero, then the contribution from such an $i$ is zero after summing over $y_4$. Else, if this component is zero, we obtain a nonzero contribution. This happens precisely when the fourth coordinate of $x^{(i)}$ lies in $G$, i.e. when $x^{(i)}+C_{\lambda,t}$ contains a point whose last coordinate is divisible with $r'$.

We combine the argument above with slicing over the third and fourth coordinate in $gB$ and obtain that, cf. \eqref{firstSlice},
\begin{equation}\label{sliceFields}
\begin{split}
     \sum_{x\in gB \cap R}&\omega_{fg}(x) =  \frac{\sigma(fg)}{\abs{fg}}\sum_{r\mid fg}\sigma(r)^{-1}\mu(r)\sum_{r'\mid r}\abs{r'}\mu(r')\\&\times\left((q-1)\sum_{r'\mid d\in R\setminus\{0\}}' \#\{x\in (gB)_d\cap R^3\} + (q-1)\sum_{c\in R\setminus\{0\}}' \#\{x\in (gB)_{c,0}\cap R^2\}\right),
\end{split}
\end{equation}
where the $'$ denotes that the summation is restricted to monics. Note that the second of the two sums above is one whose integral we have already evaluated in the proof of Proposition \ref{implicitformcount}. We remark that if $c\abs{\lambda t^3} < \abs{r}$, we still have a contribution from $y$ of the form $(0,0,0,y_4)$ and this contribution is precisely what is stated above. The contribution from $y$ not of this form when $c\abs{\lambda t^3} < \abs{r}$ will be bounded in the next section. 

The sum over $d$ above is very similar to one we have already studied in the proof of Proposition \ref{implicitformcount}, except for the condition $r'\mid d$. We can write $d=r'd'$ to see that this sum is
\begin{equation*}
    (q-1)\sum'_{d'\in R\setminus\{0\}} \abs{\lambda}^3\abs{t}^3\vol(B_{d'r't^3/\lambda}).
\end{equation*}
When integrating we may replace $\lambda$ with $\lambda_0$, where $\lambda_0\in \Lambda_{X\abs{f}^2/\abs{F}^4}$, and see that the integral is
\begin{equation*}
\begin{split}
    (q-1)&\sum_{\epsilon = 0}^2\sum_\sumstack{d\in R\setminus\{0\}\\ \deg(d'r') \equiv^3 \epsilon}'\abs{\lambda_0}^3\int_{ \abs{t}\geq 1}\eta(\abs{t}) \vol(B_{d'r't^3/\lambda_0})dt \\&= \abs{r'}^{-1/3}\sum_{\epsilon = 0}^2\abs{\lambda_0}^{10/3}\int_\sumstack{\abs{u}\geq\abs{r'/\lambda_0} \\ -v_\infty(u\lambda_0)\equiv^3 \epsilon} \Bigg(\sum_\sumstack{d'\in R\setminus\{0\}\\ \deg(d'r') \equiv^3 \epsilon\\\abs{d'}\leq \abs{u\lambda_0}/\abs{r'}}'\eta(\abs{u\lambda_0/d'r'}^{1/3})\abs{d'}^{-1/3}\Bigg)\vol(B_u)\abs{u}^{-2/3}du.
\end{split}
\end{equation*}
The condition $\deg(d'r')\equiv^3 \epsilon$ can be rewritten as $\deg(d') \equiv^3 -v_\infty(u\lambda_0 / r')$. Hence, we may use our previous results for the evaluation of the sum, with $\lambda_0$ replaced by $\lambda_0/r'$, to see that the inner sum above is equal to
\begin{equation*}
    \frac{\abs{u\lambda_0/r'}^{2/3}q}{(q-1)(q^2-1)}-\frac{q^{2\epsilon(u\lambda_0/r')/3}}{(q-1)(q^2-1)}.
\end{equation*}
Here, the function $\epsilon$ is the same as in the proof of Proposition \ref{implicitformcount}. Integrating the two terms above and separating the tails yields
\begin{equation}\label{mainandtail}
\begin{split}
    \frac{q\abs{r'}^{-1}}{(q-1)(q^2-1)}&\abs{\lambda_0}^{4}\left(\vol(B)-\int_\sumstack{\abs{u}<\abs{r'/\lambda_0} } \vol(B_u)du \right) \\&- \frac{\abs{r'}^{-1/3}}{(q-1)(q^2-1)}\abs{\lambda_0}^{10/3}\left(I^\sigma_1(\lambda_0/r')-\int_\sumstack{\abs{u}< \abs{r'/\lambda_0} } \vol(B_u)q^{2\epsilon(u\lambda_0/r')/3}\abs{u}^{-2/3}du\right).
\end{split}
\end{equation}

We now consider the contribution from the tail integrals above. If $\abs{r'}$ is smaller than some small absolute constant multiplied by $\abs{\lambda_0}$, then we may replace $B_u$ with $B_0$ above, and then just as in Proposition \ref{implicitformcount} the tail integrals above will cancel against terms coming from the second sum in \eqref{sliceFields}. On the other hand, if $\abs{r'}\gg \abs{\lambda_0}$ then we also have that $\abs{fg}\gg \abs{r}\gg \abs{\lambda_0}$. Furthermore, we can replace all the tails with an error term of size $\ll \abs{\lambda_0}^3$. Hence, we may remove the tail terms from \eqref{mainandtail} at the cost of an error $\mathcal{O}\left(\abs{\lambda_0}^3 \mathbf{1}_{\{\abs{fg}\gg \abs{\lambda_0}\}}\right)$.

Recalling that $\nu(K)=q^2-1$, we see that \eqref{rootcountint} is
\begin{equation}\label{fieldMainError}
\begin{split}
     \frac{1}{(q-1)\vol(G_0)}&\frac{\sigma(fg)}{\abs{fg}}\sum_{r\mid fg}\sigma(r)^{-1}\mu(r)\sum_{r'\mid r}\abs{r'}\mu(r')
    \\& \times \Bigg( q\abs{r'}^{-1}\abs{\lambda_0}^{4}\vol(B) - \abs{r'}^{-1/3}\abs{\lambda_0}^{10/3}I^\sigma_1(\lambda_0/r') + q(q+1)\abs{\lambda_0}^4I_0^\sigma+\mathcal{O}\left(\abs{\lambda}^3 \mathbf{1}_{\{\abs{fg}\gg \abs{\lambda_0}\}}\right)\Bigg).
\end{split}
\end{equation}
We first study the contribution of the error term to the entire sum \eqref{cutoffFieldSum}. As $\abs{\lambda_0} \asymp X^{1/4}\abs{f}^{1/2}/\abs{F}$ and $\abs{\sigma(r)}\ll \abs{r}^{1+\epsilon}$, this contribution is
\begin{equation*}
    \ll X^\epsilon \sum_{\abs{F}\leq X^\delta}\sum_{fg\mid F}\sum_{r\mid fg}\sum_{r'\mid r}\frac{X^{3/4}\abs{f}^{3/2}}{\abs{F}^3}\mathbf{1}_{\{\abs{fg}\gg X^{1/4}\abs{f}^{1/2}/\abs{F}\}}\ll X^{3/4+\epsilon} \sum_{\abs{F}\leq X^\delta}\sum_{fg\mid F}\frac{\abs{f}^{3/2}}{\abs{F}^3}\mathbf{1}_{\{\abs{fg}\gg X^{1/4}\abs{f}^{1/2}/\abs{F}\}}.
\end{equation*}
Write $F=fgh$ so that the condition in the indicator function becomes $\abs{f}^{3/2}\abs{g}^2\abs{h} \gg X^{1/4}$. We can then bound the sum above by
\begin{equation}\label{lambdacubesumbound}
    X^{3/4+\epsilon}\sum_{\abs{h}\leq X^\delta}\abs{h}^{-3}\sum_{\abs{g}\leq X^\delta/\abs{h}}\abs{g}^{-3}\sum_{\abs{f}\gg X^{1/6}/(\abs{h}^{2/3}\abs{g}^{4/3})} \abs{f}^{-3/2} \ll X^{2/3+\epsilon}\sum_{\abs{h}\leq X^\delta}\abs{h}^{-8/3}\sum_{\abs{g}\leq X^\delta/\abs{h}}\abs{g}^{-7/3},
\end{equation}
which is $\ll X^{2/3+\epsilon}$. 

We now turn to the contribution of the non-error terms from \eqref{fieldMainError}. We begin with the first of the three terms and note that the only non-zero contribution to
\begin{equation*}
     \frac{q\vol(B)\abs{\lambda_0}^4}{(q-1)\vol(G_0)}\frac{\sigma(fg)}{\abs{fg}}\sum_{r\mid fg}\sigma(r)^{-1}\mu(r)\sum_{r'\mid r}\mu(r')
\end{equation*}
is when $r = 1$, in which case the contribution is
\begin{equation*}
    \frac{q\vol(B)\abs{\lambda_0}^4}{(q-1)\vol(G_0)}\frac{\sigma(fg)}{\abs{fg}} = \frac{q}{(q-1)\#\mathrm{Aut}(\sigma)}X \cdot \frac{\abs{f}^2}{\abs{F}^4}\cdot\frac{\sigma(fg)}{\abs{fg}},
\end{equation*}
i.e. the main term from Theorem \ref{explicitformcount} multiplied with $\sigma(fg)\abs{fg}^{-1}\abs{f}^2\abs{F}^{-4}$. The total contribution from this term to \eqref{cutoffFieldSum} is then
\begin{equation*}
    \frac{q}{(q-1)\#\mathrm{Aut}(\sigma)}X\left(\sum_{F} \frac{\mu(F)}{\abs{F}^4}\sum_{fg\mid F}\mu(g)\frac{\abs{f}\sigma(fg)}{\abs{g}}- \sum_{\abs{F}> X^\delta} \frac{\mu(F)}{\abs{F}^4}\sum_{fg\mid F}\mu(g)\frac{\abs{f}\sigma(fg)}{\abs{g}}\right).
\end{equation*}
The tail sum is 
\begin{equation*}
    \ll X\sum_{\abs{F} > X^{\delta}} \frac{1}{\abs{F}^{2-\epsilon}} \ll X^{1-\delta+\epsilon}.
\end{equation*}
By using multiplicativity, one sees that the main term contributes
\begin{equation}\label{prezeta}
    \frac{q}{(q-1)\#\mathrm{Aut}(\sigma)}X \prod_{P}\left(1-\abs{P}^{-2}-\abs{P}^{-3}+\abs{P}^{-5}\right) = \frac{q}{(q-1)\#\mathrm{Aut}(\sigma)}X \prod_{P}\left(1-\abs{P}^{-2}\right)\prod_{P}\left(1-\abs{P}^{-3}\right).
\end{equation}
If we let
\begin{equation*}
    \zeta_R(s) :=  \prod_{P}\left(1-\abs{P}^{-s}\right)^{-1} = \sum_{f}\abs{f}^{-s} = \frac{1}{1-q^{1-s}}
\end{equation*}
be the $R$-semilocal Riemann zeta function, then \eqref{prezeta} is
\begin{equation*}
     \frac{q}{(q-1)\zeta_R(2)\zeta_R(3)\#\mathrm{Aut}(\sigma)}X = \frac{q^2-1}{q^2\#\mathrm{Aut}(\sigma)}X.
\end{equation*}

We now study the contribution of the second term from \eqref{fieldMainError}. This equals
\begin{equation*}
\begin{split}
    - \frac{1}{(q-1)\vol(G_0)}&\frac{\sigma(fg)}{\abs{fg}}\sum_{r\mid fg}\sigma(r)^{-1}\mu(r)\sum_{r'\mid r}\abs{r'}^{2/3}\mu(r')\abs{\lambda_0}^{10/3}I^\sigma_1(\lambda_0/r') =- \frac{1}{(q-1)\#\mathrm{Aut}(\sigma)}\frac{\sigma(fg)}{\abs{fg}}\cdot \left(\frac{X\abs{f}^2}{\abs{F}^4}\right)^{5/6}
    \\&\times\sum_{r\mid fg}\sigma(r)^{-1}\mu(r)\sum_{r'\mid r}\abs{r'}^{2/3}\mu(r')C_2^\sigma\big(\ell+2\deg(f)-4\deg(F) -4\deg(r')\big).
\end{split}
\end{equation*}
Recall that $C_2^\sigma$ is given explicitly in Proposition \ref{secevalprop} and that its value depends only on the degree of its argument, modulo three. When summing the above over $fg\mid F$ and $F$, we may separate the tail part of the sum over $F$, as we did for the main term. The tail contribution is
\begin{equation*}
    \ll X^{5/6}\sum_{\abs{F} > X^\delta} \abs{F}^{-10/3}\sum_{fg\mid F} \abs{f}^{5/3}\sum_{r\mid fg}\sum_{r'\mid r}1\ll X^{5/6+\epsilon} \sum_{\abs{F} > X^\delta}\abs{F}^{-5/3} \ll X^{5/6-2\delta/3+\epsilon}.
\end{equation*}

Write $F=P_1...P_n$, $f=f_1...f_n$ and $g=g_1...g_n$ with $f_i = (f,P_i)$ and $g_i = (g,P_i)$. The contribution from the non-tail terms when summing over $fg\mid F$ is then
\begin{equation}\label{halfmultsum}
\begin{split}
    - X^{5/6}&\abs{F}^{-10/3} \frac{1}{(q-1)\#\mathrm{Aut}(\sigma)} \sum_{f_1g_1 \mid P_1} S_1\cdot ...\cdot \sum_{f_ng_n \mid P_n} S_n,
\end{split}
\end{equation}
where
\begin{equation*}
    S_i = \mu(g_i)\frac{\sigma(f_ig_i)\abs{f_i}^{5/3}}{\abs{f_ig_i}}\sum_{r_i\mid f_ig_i}\sigma(r_i)^{-1}\mu(r_i)\sum_{r'_i\mid r_i}\abs{r'_i}^{2/3}\mu(r'_i),
\end{equation*}
for $i < n$ and with $S_n$ also containing the factor $C_2^\sigma\big(\ell+2\deg(f)-4\deg(F) -4\deg(r')\big)$. Write
\begin{equation*}
\begin{split}
    \ell &+2\deg(f)-4\deg(F)-4\deg(r') = \bigg(\ell +2\deg(f/f_n)-4\deg(F/P_n)-4\deg(r'/r'_n)\bigg)\\&+ \bigg( 2\deg(f_n)-4\deg(P_n)-4\deg(r'_n)\bigg),
\end{split}
\end{equation*}
and let us temporarily write $\ell_n = \ell +2\deg(f/f_n)-4\deg(F/P_n)-4\deg(r'/r'_n)$. Adding the three terms coming from the condition $f_ng_n\mid P_n$ shows that
\begin{equation*}
    \abs{P_n}^{-10/3}\sum_{f_ng_n\mid P_n}S_n = \abs{P_n}^{-2}C_2^\sigma\big(\ell_n\big)+ \abs{P_n}^{-5/3}\left(1-\abs{P}^{-2}\right)C_2^\sigma\big(\ell_n-2\deg(P_n)\big).
\end{equation*}
Using induction, we see that \eqref{halfmultsum} equals
\begin{equation}\label{fieldinduct}
    - \frac{X^{5/6}}{(q-1)\#\mathrm{Aut}(\sigma)} \sum_{fg = F}C_2^\sigma\big(\ell - 2\deg(f)\big) \left(\prod_{P\mid f} \left(1-\abs{P}^{-2}\right)\abs{P}^{-5/3}\right)\prod_{P\mid g}\abs{P}^{-2}.
\end{equation}

Multiplying the above with $\mu(F)$ and summing over all squarefree $F$ we obtain
\begin{equation*}
\begin{split}
     - \frac{X^{5/6}}{(q-1)\#\mathrm{Aut}(\sigma)} \sum_{f}C_2^\sigma(\ell-2\deg(f))\mu(f)\left(\prod_{P\mid f} \left(1-\abs{P}^{-2}\right)\abs{P}^{-5/3}\right) \sum_{g: (g,f) = 1}\mu(g)\abs{g}^{-2}.
\end{split}
\end{equation*}
The innermost sum is 
\begin{equation*}
    \zeta_R(2)^{-1}\prod_{P\mid f}\left(1-\abs{P}^{-2}\right)^{-1},
\end{equation*}
so that the sum above equals
\begin{equation*}
    - \frac{X^{5/6}}{(q-1)\#\mathrm{Aut}(\sigma)\zeta_R(2)} \sum_{f}C_2^\sigma(\ell-2\deg(f))\mu(f)\abs{f}^{-5/3}.
\end{equation*}
Now, by studying the zeta function $1/\zeta_R(s)$ associated to the Möbius function, one obtains the well-known result for the sum over monic polynomials that
\begin{equation*}
    \sum_{\deg(f) = n}\mu(f) = \begin{cases}
        1, \,\, &n=0, \\
        -q, \,\, &n=1, \\
        0, \,\, &n\geq 2.
    \end{cases}
\end{equation*}
We thus finally arrive at
\begin{equation*}
    - \frac{X^{5/6}}{(q-1)\#\mathrm{Aut}(\sigma)\zeta_R(2)}  \cdot \left(C_2^\sigma(\ell)-q^{-2/3}C_2^\sigma(\ell-2)\right)
\end{equation*}
as the secondary term.

Finally, we investigate the contribution from the last non-error term in \eqref{fieldMainError}. Using multiplicativity, we see that this is
\begin{equation}
\begin{split}
     \frac{q(q+1)I_0^\sigma}{(q-1)\vol(B)\#\mathrm{Aut}(\sigma)}&X\frac{\abs{f}^2}{\abs{F}^4}\frac{\sigma(fg)}{\abs{fg}}\sum_{r\mid fg}\sigma(r)^{-1}\mu(r)\sum_{r'\mid r}\abs{r'}\mu(r')
     \\& = \frac{q(q+1)I_0^\sigma}{(q-1)\vol(B)\#\mathrm{Aut}(\sigma)}X\frac{\abs{f}\sigma(fg)}{\abs{F}^4\abs{g}}\sum_{r\mid fg}\sigma(r)^{-1}\Phi(r) = \frac{q(q+1)I_0^\sigma}{(q-1)\vol(B)\#\mathrm{Aut}(\sigma)}X\frac{\abs{f}^2}{\abs{F}^4} 2^{\tau'(fg)},
\end{split}
\end{equation}
where $\Phi$ is the Euler phi function and $\tau'$ is the number-of-prime-divisors function. Note that $2^{\tau'(fg)}$ is the number of divisors of $fg$, which is $\ll \abs{fg}^\epsilon$. Summing this over $fg\mid F$ and over $F$ against $\mu(F)\mu(g)$ yields (up to a tail term that can be absorbed into the error terms that we have already accumulated):
\begin{equation*}
    \frac{q(q+1)I_0^\sigma}{(q-1)\vol(B)\#\mathrm{Aut}(\sigma)}X\sum_{F}\mu(F)\abs{F}^{-4}\sum_{fg\mid F}\mu(g)\abs{f}^2 2^{\tau'(fg)} = \frac{q(q+1)I_0^\sigma}{(q-1)\vol(B)\#\mathrm{Aut}(\sigma)}X\sum_{F}\mu(F)\abs{F}^{-2}\prod_{P\mid F}\left(2-\abs{P}^{-2}\right).
\end{equation*}
Again, using multiplicativity, we see that this is
\begin{equation*}
    \frac{q(q+1)I_0^\sigma}{(q-1)\vol(B)\#\mathrm{Aut}(\sigma)}X\prod_{P}\left(1-\abs{P}^{-2}\right)^2 = \frac{q(q+1)I_0^\sigma}{(q-1)\vol(B)\#\mathrm{Aut}(\sigma)\zeta_R(2)^2}X = \frac{(q^2-1)I_0^\sigma}{q\vol(B)\#\mathrm{Aut}(\sigma)}X.
\end{equation*}
\subsection{Bounding the remaining error terms}\label{fielderrorsect}
We have now isolated main terms and secondary terms coming from characters associated to $y$ of the form $(0,0,0,y_4)$. We have also bounded the error term coming from such characters. It remains to study the contribution to \eqref{rootsum} when $c\abs{\lambda_0 t^3} < \abs{r}$ and $y$ does not have the form $(0,0,0,y_4)$. Note that $c\abs{\lambda_0t^3}< \abs{r}$ implies in particular that $\abs{\lambda_0}\ll \abs{fg}$.

Let us first begin with the case when $\abs{t}^3 \gg \abs{\lambda_0}$ so that the length of the side of $C_{\lambda_0,t}$ in the fourth coordinate direction is small. As $B$ contains no element with discriminant zero, we may assume that $\abs{t}\ll \abs{\lambda_0}$, as before. We can estimate
\begin{equation*}
    \sum_{x\in C_{\lambda_0,t}} 1 \ll \abs{\lambda_0}^3\abs{t}^3.
\end{equation*}

Next, we note that the summation over $x$ in \eqref{rootsum} restricts $y_1$ to one of $c^{-1}\abs{r/\lambda_0 t^3}$ congruence classes and $y_2$ to one of $c^{-1}\abs{r/\lambda_0 t}$ congruence classes. We thus have $\ll \abs{r^2/\lambda_0^2t^4}$ ways to pick $y_1$ and $y_2$ modulo $r$, with $y_1$ a unit in $R/rR$. As $y$ should have a triple root, determining $y_1$ and $y_2$ in this manner already determines the values of $y_3$ and $y_4$. 

Let us instead suppose that $y_1$ is zero modulo some $r'\mid r$ and a unit modulo $r'' := r/r'$, with $r'\neq r$. We must then pick $y_2,y_3$ congruent to zero modulo $r'$, and we have $\ll \abs{r'}$ choices for $y_4$ modulo $r'$. Note that for $r''$ we must have that $\abs{r''} \geq c\abs{\lambda_0 t^3}$, else $y_1$ would just be zero. Hence, modulo $r''$ we have $\ll \abs{r''^2/\lambda_0^2 t^4}$ choices. In total, this gives $\ll \abs{r^2/\lambda_0^2 t^4}$ choices for $y$, which is the same bound as in the other case. 

Multiplying this bound with $\abs{\lambda_0}^3\abs{t}^3$ yields $\abs{r}^2\abs{\lambda_0}\abs{t}^{-1}\ll \abs{r}^2\abs{t}^2$. Recall that we should integrate this against $dg$ up to $\abs{t}\ll \abs{\lambda_0}.$ Using also that $\abs{r}\leq \abs{F}$ we see that the result is that the contribution to \eqref{rootsum} of these terms is $\ll X^\epsilon \abs{F}$. Summing this up to $\abs{F}\leq X^\delta$ yields a term $\ll X^{2\delta+\epsilon}$. At this point, we have error terms containing $X$ to the powers $1-\delta, 5/6-2\delta/3, 2/3$ and $2\delta$. We see that these exponents are minimised when $\delta = 1/3$, in which case they are all bounded by $2/3$.

We now turn to the case when $\abs{t}^3\ll \abs{\lambda_0}$. We then estimate
\begin{equation*}
    \sum_{x\in C_{\lambda_0,t}} 1 \ll \abs{\lambda_0}^4.
\end{equation*}
To estimate the contribution from the sum over $y$ requires some more effort than above. The form $y$ having a triple root modulo $r$ implies that, in particular, $r^2\mid \mathrm{Disc}(y)$. We split the summation over $y$ into two cases, depending on whether $\mathrm{Disc}(y)$ is nonzero or not. 

We begin by estimating the contribution from forms whose discriminant vanishes, which means that they have a double root in $R$. If this double root is $[1:0]$, then using the indeterminates $w$ and $z$ instead of $x$ and $y$ to avoid confusion, $y$ has the form $y=ewz^2+dz^3$ with $e =re'$, which means that $e = 0$ as $\abs{y_3} =\abs{e} < \abs{r}$. We have already taken forms of this shape into account in a previous section, where they contributed to the non-error terms. On the other hand, if the double root is $[0:1]$, then $y$ has the form $aw^3$, which provides us with $\ll \abs{r/\lambda_0 t^3}$ choices for the coefficient. This gives us an error term $\ll \abs{\lambda_0}^3 \mathbf{1}_{\{\abs{fg}\gg \abs{\lambda_0}\}}$, which we have already bounded, see $\eqref{lambdacubesumbound}$ and the discussion leading up to it.

We now turn to the case where the root is $[\ell:1]$ for some nonzero $\ell$. Then, $y$ has the form $(aw+bz)(w-\ell z)^2 = aw^3+(b-2a\ell)w^2z+(a\ell^2-2b\ell)wz^2+b\ell^2z^3$ with $a\ell+b\equiv 0 \,\,\mathrm{mod}\,\, r$. Now, $\abs{b} \leq \abs{b\ell^2} < \abs{r}$. Therefore, $\abs{b} < \abs{r}$ and thus we see from examining the third coefficient that $\abs{a\ell} < \abs{r}$. Hence, we must in fact have that $b=-a\ell$ and the last coefficient is thus $-a\ell^3$. Now, the summation over $x$ restricts us to $\abs{a} < c^{-1}\abs{r/\lambda_0 t^3}$ and $\abs{a\ell^3 } < c^{-1}\abs{rt^3/\lambda_0}$. We can thus bound the number of forms by
\begin{equation*}
    \sum_{\abs{a} < c^{-1}\abs{r/\lambda_0 t^3}} \left(\frac{\abs{rt^3}}{\abs{a\lambda_0 }}\right)^{1/3} \ll \frac{\abs{r}}{\abs{\lambda_0 t}},
\end{equation*}
which also provides an acceptable error. This finishes our estimation of the contribution from $y$ with $\mathrm{Disc}(y) = 0$.

Now, we study the contribution from nondegenerate forms. Note that if $y$ has a triple root, then $r^2 \mid \mathrm{Disc}(y)$. The contribution to \eqref{rootcountint} is then
\begin{equation}\label{nondegencontrib}
\begin{split}
 \ll X^\epsilon \abs{\lambda_0}^4 \sum_{r\mid fg}\abs{r}^{-1}\int_{g\in \Lambda_{X\abs{f}^2/\abs{F}^4}S}\sum_\sumstack{\abs{y_k} < c^{-1}\abs{r/\lambda t^{j(k)}}\\ \mathrm{Disc}(y) \neq 0 \\ r^2 \mid \mathrm{Disc}(y)} 1 \,dg',
\end{split}
\end{equation}
where $j(1) = 3, j(2) = 1, j(3) = -1$ and $j(4) = -3$ and $dg' = \abs{t}^{-3}\abs{\lambda}^{-1}dfdtd\lambda$. Now, the integrand does not depend on $f$, which means that we only need to integrate over $\lambda$ and $t$. We may rewrite the condition $\abs{y_i} < c^{-1}\abs{r/\lambda t^{j(i)}}$ as $y\in r(\lambda a(t))^{-1}C^{-1}$, where $C^{-1} := \{(x_1,x_2,x_3,x_4): \abs{x_i} < c^{-1}\}$. The integral is then
\begin{equation*}
   \ll X^\epsilon \abs{\lambda_0}^4 \sum_{r\mid fg}\abs{r}^{-1}\sum_\sumstack{ \mathrm{Disc}(y) \neq 0\\r^2 \mid \mathrm{Disc}(y)}\int_{\lambda,t} \mathbf{1}_{r^{-1}\lambda a(t) y\in C^{-1}} \abs{t}^{-3}\abs{\lambda}^{-1}d\lambda dt.
\end{equation*}
At the cost of constant factors, we may extend the integration back over the $f$ and $k$ such that $\lambda a(t)n(f)k$ lies in the fundamental domain. Here we use that $C^{-1}$ is open. Next, we may restrict the summation over $y$ to one representative from each $\GLtwo(R)$-orbit as long as we extend the integration to all of $\GLtwo(K_\infty)_{X\abs{f}^2/\abs{F}^4}$. We may also add the condition $\abs{\mathrm{Disc}(y)} \leq \abs{r}^4$ to the sum over $y$. We can then bound the above as
\begin{equation*}
    \ll X^\epsilon \abs{\lambda_0}^4 \sum_{r\mid fg}\abs{r}^{-1}\sum_\sumstack{y\in \GLtwo(R)\setminus V(R)\\ \mathrm{Disc}(y) \neq 0 \\ \abs{\mathrm{Disc(y)}} \leq \abs{r}^4\\ r^2 \mid \mathrm{Disc}(y)}\int_{g\in \GLtwo(K_\infty)_{X\abs{f}^2/\abs{F}^4}} \mathbf{1}_{r^{-1}g y\in C^{-1}} dg.
\end{equation*}
Now, the discriminant of $r^{-1}gy$ has absolute value $\asymp \abs{\mathrm{Disc}(y)}\abs{\lambda_0}^4/\abs{r}^4$. Recalling that changing the integral over $\GLtwo(K_\infty)$ to one over $V(R)$ changes $dg$ to $\abs{\mathrm{Disc}(r^{-1}gy)}^{-1}dx$, cf. \cite[Prop. 23]{BST}, we obtain a bound
\begin{equation*}
    \ll X^\epsilon \sum_{r\mid fg}\abs{r}^{3}\sum_\sumstack{y\in \GLtwo(R)\setminus V(R)\\ \mathrm{Disc}(y) \neq 0\\ \abs{\mathrm{Disc(y)} }\leq \abs{r}^4\\r^2 \mid \mathrm{Disc}(y)}\frac{1}{\abs{\mathrm{Disc}(y)}}\int_{ C^{-1}} 1 dx.
\end{equation*}
We now use the function field analogue of \cite[Proposition 4.5]{BTT} asserting that the number of orbits of forms with $r^2\mid \mathrm{Disc}(y)$ with $\abs{\mathrm{Disc}(y)} = Y$ is $\ll Y/\abs{r}^{2-\epsilon}$. Applying this result, we can immediately bound the above by 
\begin{equation*}
    X^\epsilon \sum_{r\mid fg}\abs{r} \ll X^\epsilon \abs{fg}\ll  X^\epsilon \abs{F}.
\end{equation*}
Summing this over $fg\mid F$ and $\abs{F}\leq X^\delta$, using $\delta = 2/3$, yields an error term $\ll X^{2/3+\epsilon}$, which finishes our analysis of the error term. Indeed, we have now proven the following result.
\begin{prop}\label{maxringcount}
    We have that
    \begin{equation*}
        N(U\cap V(R)^\sigma; X) = \frac{q^2-1}{q^2\#\mathrm{Aut}(\sigma)}X- \frac{X^{5/6}}{q\#\mathrm{Aut}(\sigma)}  \cdot \left(C_2^\sigma(\ell)-q^{-2/3}C_2^\sigma(\ell-2)\right) + \frac{(q^2-1)I_0^\sigma}{q\vol(B)\#\mathrm{Aut}(\sigma)}X + \mathcal{O}\left(X^{2/3+\epsilon}\right),
    \end{equation*}
    where the forms $v$ in the left-hand side are counted with weight $\abs{\mathrm{Stab}_{\GLtwo(R)}(v)}^{-1}$.
\end{prop}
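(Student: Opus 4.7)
The plan is to execute the discriminant-reducing sieve outlined in the preceding discussion, tracking main, secondary, and error terms at each stage. First I would apply inclusion-exclusion to write $N(U\cap V(R)^\sigma; X) = \sum_F \mu(F) N(W_F \cap V(R)^\sigma; X)$, and use the bound \eqref{cutoffBound} to truncate at $\abs{F} \leq X^\delta$ (for $\delta$ to be optimised), incurring an error of size $\mathcal{O}(X^{1-\delta+\epsilon})$. For the remaining $F$, I would invoke Lemmas \ref{subringformlemma} and \ref{overringlemma} to rewrite $N(W_F \cap V(R)^\sigma; X)$ as a double sum over $fg\mid F$ and $\alpha \in \mathbb{P}^1(R/fgR)$ of counts of forms whose reduction modulo the primes dividing $fg$ has a root at $\alpha$. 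Collecting the roots yields a weighted sum over $x\in V(R)$ of discriminant $X\abs{f}^2/\abs{F}^4$ with weight $\omega_{fg}(x)$, which can be expressed as the integral \eqref{rootcountint} over the fundamental domain from Proposition \ref{fundprop}.

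Next I would analyse the integrand by finite Fourier analysis, using the exact formula \eqref{rootFouriereval} for $\widehat{\omega}_P$, which restricts the dual variable $y$ (of type $r\mid fg$) to forms with a triple root modulo each prime dividing $r$. Writing $gB$ as a union of boxes $x^{(i)} + C_{\lambda,t}$, the character sum over $C_{\lambda,t}$ vanishes unless enough coordinates of $y$ are zero. When $c\abs{\lambda t^3} \geq \abs{r}$, this forces $y = (0,0,0,y_4)$, and Möbius inversion reduces the contribution to a slicing identity analogous to \eqref{firstSlice}, with the extra divisibility condition $r' \mid d$. Mimicking the evaluation in Proposition \ref{implicitformcount} and identifying the tails so they cancel against the $c$-slice, I would obtain the expression \eqref{fieldMainError}.

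The contributions of the three main terms in \eqref{fieldMainError} are then summed against $\mu(F)\mu(g)$ using multiplicativity. The first term gives the main term $\frac{q}{(q-1)\#\mathrm{Aut}(\sigma)} \cdot X \cdot \zeta_R(2)^{-1}\zeta_R(3)^{-1}$, which simplifies to $(q^2-1)X/(q^2\#\mathrm{Aut}(\sigma))$. The secondary term requires more care: expanding $C_2^\sigma$ at shifted arguments, using multiplicativity prime by prime as in \eqref{fieldinduct}, and then summing over squarefree $F$ with the Möbius function (exploiting that $\sum_{\deg(f)=n}\mu(f)$ vanishes for $n\geq 2$) collapses the sum to the two terms $C_2^\sigma(\ell) - q^{-2/3}C_2^\sigma(\ell-2)$. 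The reducible term is evaluated similarly, producing the factor $\zeta_R(2)^{-2}$.

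The main obstacle, and the bulk of the work, is controlling the remaining error from characters with $c\abs{\lambda t^3} < \abs{r}$ where $y$ is not of the form $(0,0,0,y_4)$. I would split the analysis into two regimes based on whether $\abs{t}^3$ is large or small compared to $\abs{\lambda_0}$. In the first regime, directly counting admissible $y$ modulo $r$ produces an error $\ll X^{2\delta+\epsilon}$. In the second regime, I would separate forms $y$ with vanishing discriminant (disposing of each possible double root by direct inspection, and noting that the $[1{:}0]$ case has already been counted as part of the main terms) from nondegenerate forms. For the latter, I would recast the sum as an integral over $\GLtwo(K_\infty)$, invoke the function field analogue of \cite[Proposition 4.5]{BTT} giving $\ll Y/\abs{r}^{2-\epsilon}$ orbits of forms with $r^2\mid \mathrm{Disc}(y)$ and $\abs{\mathrm{Disc}(y)} = Y$, and conclude an error of size $\mathcal{O}(X^{2\delta+\epsilon})$. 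Balancing all error contributions ($X^{1-\delta+\epsilon}$, $X^{5/6-2\delta/3+\epsilon}$, $X^{2/3+\epsilon}$, $X^{2\delta+\epsilon}$) at $\delta = 1/3$ produces the claimed $\mathcal{O}(X^{2/3+\epsilon})$.
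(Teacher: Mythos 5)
Your proposal is correct and follows essentially the same route as the paper: the same inclusion-exclusion sieve with cutoff, the same Fourier-analytic reduction to dual forms $(0,0,0,y_4)$ with Möbius inversion and the slicing identity, the same prime-by-prime evaluation of main, secondary and reducible terms, and the same error analysis split by the size of $\abs{t}^3$ relative to $\abs{\lambda_0}$ with the function field analogue of \cite[Proposition 4.5]{BTT} handling the nondegenerate dual forms. Your balancing of the error exponents at $\delta=1/3$ is also consistent with the paper's treatment.
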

\subsection{Counting cubic fields}
Proposition \ref{maxringcount} allows us to count maximal cubic rings. Similarly to Lemma \ref{irrformcount}, we may restrict to maximal cubic orders at the cost of replacing the third term above with $\mathcal{O}\left(X^{3/4+\epsilon}\right)$. Aside from $R\oplus R\oplus R$, the maximal rings that are not cubic orders correspond to the sum of $R$ and the integral closure of $R$ inside some quadratic field. Now, quadratic fields with prescribed behaviour at $P_\infty$ can be studied directly by studying certain squarefree polynomials. One obtains a main term of order $X$ and an error term that is smaller than $\mathcal{O}(X^{2/3+\epsilon})$, cf. \cite[Lemma 8.1]{BTT}. By combining these two results, we can bootstrap Proposition \ref{maxringcount} to an estimate for the counting function of $U\cap V(R)^{(\sigma, \text{ irr})}$ by simply removing the third term, while keeping the same error term.  

We now count the number of cubic fields with discriminant $Y=q^{M}$, $2\mid M$, by summing over the various isomorphism classes $\sigma$ of the completion at $P_\infty$. If $\sigma$ is unramified, then there is no contribution to the global discriminant from $P_\infty$, so that the semilocal discriminant $X$ and the global discriminant agree. However, if $\sigma$ has splitting type $(1^21)$, then the semilocal discriminant is $X = q^{M-1}$ and if $\sigma$ has splitting type $(1^3)$, then the semilocal discriminant is $X = q^{M-2}$. We write $X(\sigma,Y)$ for $X$ as a function of $\sigma$ and $Y$.

Summing over $\sigma$, and using the computations of $\#\mathrm{Aut}(\sigma)$ from Section \ref{autch}, we find that
\begin{equation*}
    \sum_{\sigma}\frac{X(Y,\sigma)}{\#\mathrm{Aut}(\sigma)} = \left(\frac{1}{6} + \frac{1}{2} + \frac{1}{3}\right)Y + \left(2\cdot \frac{1}{2}\right)Yq^{-1} + Yq^{-2} = \left(1+q^{-1}+q^{-2}\right)Y = \frac{q^3-1}{q^2(q-1)}Y,
\end{equation*}
which allows us to write down the main term for the total number of cubic fields of discriminant $Y$. For the secondary term, we have from Proposition \ref{secevalprop} that $C_2^\sigma(\ell)-q^{-2/3}C_2^\sigma(\ell-2)$ is given by the following table.
\begin{center}
\begin{tabular}{ |c|c|c|c| } 
 \hline
 Type of $\sigma$ & $\ell \equiv^3 0$ & $\ell \equiv^3 1$ & $\ell \equiv^3 2$ \\ 
 \hline
 $(111)$ & $3(q-1)$ & $3q^{-1/3}(q-1)$ & $q^{-2/3}(q^2-1)$ \\ 
 $(21)$ & $q-1$ & $q^{-1/3}(q-1)$  & $q^{-2/3}(q^2-1)$\\ 
 $(3)$ & $0$ & $0$  & $q^{-2/3}(q^2-1)$ \\
 $(1^21)$ & $q^{-1/2}(q-1)$& $q^{1/6}(q-1)$  & $2q^{-1/6}(q-1)$ \\
 $(1^3)$
 & $q-1$ & $q^{-1/3}(q-1)$ & $0$ \\
 \hline
\end{tabular}
\end{center}
Hence,
\begin{equation*}
    \sum_{\sigma}\frac{\big(X(Y,\sigma)\big)^{5/6}}{\#\mathrm{Aut}(\sigma)}\left(C_2^\sigma(\ell)-q^{-2/3}C_2^\sigma(\ell-2)\right) = Y^{5/6}(q-1)\cdot \begin{cases}
    q^{-2}(q+1)^2,    &M \equiv^3 0,\\
    q^{-4/3}(q+1),       &M \equiv^3 1,\\
    q^{-5/3}(q+1)^2, &M \equiv^3 2,
    \end{cases}
\end{equation*}
where $\ell = M$ for unramified $\sigma$, $\ell = M-1$ for $\sigma$ of type $(1^21)$ while $\ell = M-2$ for $\sigma$ of type $(1^3)$. This completes the proof of Theorem \ref{allfieldthm}.

\section{The one-level density}\label{onelevch}
Before proving Theorem \ref{allfieldSplitCondThm}, concerning the number of cubic fields with splitting conditions on finitely many primes, we showcase an application of our results. Specifically, we show how to obtain a lower bound for the error term in \eqref{corspliteq} and, by extension, for the error term in Theorem \ref{allfieldSplitCondThm}. This is accomplished by studying low-lying zeros of certain $L$-functions using the so-called one-level density. We use the same methods as in \cite[Theorem 1.1]{CFLS}, but as the GRH is a theorem in our setting, our results are unconditional.

In this section, we will only use the one-level density as a tool for obtaining Theorem \ref{omegaresultthm}. However, we remark that the one-level density is an interesting object in its own right and has been studied for many different families of $L$-functions by several authors, see e.g. \cite{Hughes-Rudnick}, \cite{ILS}, \cite{Ozluk-Snyder}, \cite{Rudnick}, \cite{Young}. The one-level density associated with the Dedekind zeta functions of cubic fields over $\mathbb{Q}$ was first studied by \cite{Yang}, and later by \cite{CK}, \cite{SST1} and \cite{CFLS}. 

Conjecturally, see e.g. \cite{Katz-Sarnak}, the main term of the one-level density is governed by the so-called symmetry type of the $L$-functions under consideration. For $L$-functions associated to cubic number fields over $\mathbb{Q}$, this symmetry type is known to be symplectic, from work of \cite{Yang}. Our computations in this section show that this is also the case over $\fq(T)$, as expected.

\subsection{Preliminaries for the one-level density}
We let $\mathcal{F}(Y)$ denote the set of cubic $S_3$-extensions of $K=\fq(T)$ with discriminant equal to $Y=q^M$, with $M$ even. As there are very few $C_3$-cubic fields, one could include these and obtain similar results. As usual, we require that $2,3\nmid q$.

We now use $\mathcal{F}(Y)$ to parametrise certain $L$-functions. Specifically, given $L\in \mathcal{F}(Y)$, we let $\mathcal{L}(s;L)$ denote a certain Artin $L$-function associated to $L$, namely the quotient of the (global) Dedekind zeta function $$\zeta_L(s) = \prod_{\mathfrak{P}}\left(1-q^{-s\deg(\mathfrak{P})}\right)^{-1}$$ by the (global) Riemann zeta function $$\zeta_K(s) = \prod_{P}\left(1-q^{-s\deg(P)}\right)^{-1} = \frac{1}{(1-q^{-s})(1-q^{1-s})}.$$ Here, the products range over all primes, including the infinite ones, in $L$ and $K$ respectively. Now, by \cite[Theorem 5.9]{Rosen}, if one makes the change of variables $u=q^{-s}$ after dividing and writes $P_L(u)$ for the result, then this is a polynomial in $u$. The degree of this polynomial is $2g$, where $g$ is the genus of the extension. The Riemann--Hurwitz formula connects the discriminant with the genus and asserts that
\begin{equation*}
    2g-2 = 3 \cdot (-2)+M \iff 2g+4 = M,
\end{equation*}
see \cite[Theorem 7.16]{Rosen}.

Now, the Riemann Hypothesis, which is a theorem over function fields, states that every root of $P_L(u)$ has absolute value equal to $q^{-1/2}$. Hence, all roots have the form $q^{-1/2}e^{i\theta}$. Using this observation, we fix a real even Schwartz function $\psi$ and let
\begin{equation*}
    D_L(\psi) = \sum_{\theta}\psi\left(2g_L\frac{\theta}{2\pi}\right),
\end{equation*}
where the sum ranges over all $\theta$, counted with multiplicity, such that $q^{-1/2}e^{i\theta}$ is a root of $P_L(u)$. Here $g_L$ is the genus of $L$. Sometimes we write $N_L = 2g_L$ for simplicity. Note that $N_L$ only depends on $Y$ and not on the specific choice of cubic extension $L$. The one-level density is then defined as
\begin{equation*}
    \frac{1}{\#\mathcal{F}(Y)}\sum_{L\in \mathcal
F(Y)}D_L(\psi).
\end{equation*}
\subsection{The explicit formula}
The one-level density is often studied through a so-called explicit formula. In the function field setting, this explicit formula is obtained through Poisson summation. We write the roots of $P_L(u)$ as $q^{-1/2}e^{i\theta_j}$ with $1\leq j\leq N_L$. Then,
\begin{equation*}
    D_L(\psi) = \sum_{j=1}^{N_L}\sum_{n\in \mathbb{Z}}\psi\left(N_L\frac{\theta_j+2\pi n}{2\pi}\right).
\end{equation*}
Now, the Fourier transform of
\begin{equation*}
    f(x) = \psi\left(\frac{N_L\theta_j}{2\pi } + N_Lx\right)
\end{equation*}
is 
\begin{equation*}
    \widehat{f}(\xi) = \frac{1}{N_L}\widehat{\psi}\left(\frac{\xi}{N_L}\right)e^{i\theta_j\xi},
\end{equation*}
so that Poisson summation shows that
\begin{equation*}
    D_L(\psi) = \frac{1}{N_L}\sum_{n\in \mathbb{Z}}\widehat{\psi}\left(\frac{n}{N_L}\right)\left(\sum_{j=1}^{N_L}e^{i\theta_j n}\right).
\end{equation*}
Denoting the innermost sum by $c^L_n$, we note that the functional equation implies that $c^L_n = c^L_{-n}$, see \cite[p.55]{Rosen}. Hence, we may restrict our attention to $n\geq 0$, recalling that $\psi$ is real and even.

The $c_n^L$ appear naturally in the logarithmic derivative of $P_L(u)$. Indeed, write 
\begin{equation*}
    P_L(u) = \prod_{i=1}^{N_L}\left(1-\pi_i u\right),
\end{equation*}
where the $\pi_i$ are the inverse roots. Then, we see that, for small enough $\abs{u}$,
\begin{equation*}
    \frac{P'_L}{P_L}(u) = -\frac{1}{u}\sum_{n=1}^\infty u^n\sum_{i=1}^{N_L} \pi_i^n.
\end{equation*}
As $\pi_i = q^{1/2}e^{-i\theta_i}$, we see that
\begin{equation*}
    \frac{P'_L}{P_L}(u) = -\frac{1}{u}\sum_{n=1}^\infty q^{n/2}c^L_{n}u^n.
\end{equation*}

We also note that
\begin{equation}\label{firstlogder}
    \frac{\zeta_L'}{\zeta_L}(u) = \frac{1}{1-u} + \frac{q}{1-qu} + \frac{P'_L}{P_L}(u) = \frac{1}{u}\sum_{n=1}^\infty\left(q^n+1 -q^{n/2}c^L_{n}\right)u^n.
\end{equation}
On the other hand,
\begin{equation*}
    \zeta_L(u) = \prod_{P} \prod_{\mathfrak{P}\mid P}\left(1-u^{\deg\mathfrak{P}}\right)^{-1}.
\end{equation*}
Above, $P$ ranges over all primes, including $P_\infty$. Hence, for the logarithmic derivative, we find that
\begin{equation}\label{seclogder}
    \frac{\zeta_L'}{\zeta_L}(u) = \frac{1}{u}\sum_{P}\sum_{\mathfrak{P}\mid P}\sum_{k=1}^\infty u^{k\deg \mathfrak{P}}\deg\mathfrak{P}.
\end{equation}
Combining \eqref{firstlogder} and \eqref{seclogder}, we conclude that
\begin{equation}\label{traceeq}
    c^L_n  = q^{-n/2}\Bigg(q^n + 1 -\sum_{P}\deg P\sum_\sumstack{\mathfrak{P}\mid P \\ \deg\mathfrak{P} \mid n} f(\mathfrak{P}/P)\Bigg),
\end{equation}
where $f(\mathfrak{P}/P)$ is the inertial degree, see Section \ref{prelch}. An equality of the type \eqref{traceeq} is often referred to as an "explicit formula".
\subsection{Averaging over $L$}
We now apply \eqref{traceeq} to the computation of the one-level density and conclude that
\begin{equation}\label{onelevexpr}
    \frac{1}{\#\mathcal{F}(Y)}\sum_{L\in \mathcal{F}(Y)}D_L(\psi) = \widehat{\psi}(0) + \frac{2}{N_L\#\mathcal{F}(Y)}\sum_{n\geq 1}\widehat{\psi}\left(\frac{n}{N_L}\right)\Bigg(\bigg(q^{n/2}+q^{-n/2}\bigg) \#\mathcal{F}(Y)-q^{-n/2}\sum_{P}\deg P\sum_{L\in \mathcal{F}(Y)}\sum_\sumstack{\mathfrak{P}\mid P \\ \deg\mathfrak{P} \mid n} f(\mathfrak{P}/P)\Bigg).
\end{equation}
Now,
\begin{equation}\label{sumdegsplit}
    \sum_{\mathfrak{P}\mid P}\mathbf{1}_{\deg \mathfrak{P}\mid n}f(\mathfrak{P}/P) = \delta_{\deg P \mid n} \cdot\begin{cases}
        3,& \text{ for $P$ of type $(111)$},
        \\1+2\delta_{2\deg P\mid n},& \text{ for $P$ of type $(21)$},
        \\3\delta_{3\deg P\mid n},& \text{ for $P$ of type $(3)$},
        \\2,& \text{ for $P$ of type $(1^21)$},
        \\1,&\text{ for $P$ of type $(1^3)$}.
    \end{cases}
\end{equation}
If $S$ is one of the five splitting types above, we write $\mathcal{F}_{P,S}(Y)$ for the subcollection of $\mathcal{F}(Y)$ where $P$ splits according to $S$.

We note that by the Polynomial Prime Number Theorem, \cite[Proposition 2.1]{Rosen}, we have that
\begin{equation*}
    q^{-n/2}\sum_{\deg P \mid n} \deg P = q^{n/2} + q^{-n/2},
\end{equation*}
where we also used that $P_\infty$ has degree one. Thus, by adding and
subtracting $1$ from \eqref{sumdegsplit}, we find that
\begin{equation}\label{onelevtermsplit}
\begin{split}
    \bigg(q^{n/2}+&q^{-n/2}\bigg) \#\mathcal{F}(Y)-q^{-n/2}\sum_{P}\deg P\sum_{L\in \mathcal{F}(Y)}\sum_\sumstack{\mathfrak{P}\mid P \\ \deg\mathfrak{P} \mid n} f(\mathfrak{P}/P) \\& =-q^{-n/2}\sum_{\deg P \mid n}\deg P\big(2\#\mathcal{F}_{P,(111)}(Y)+2\delta_{2\deg P \mid n}\#\mathcal{F}_{P,(21)}(Y) + (3\delta_{3\deg P \mid n}-1)\#\mathcal{F}_{P,(3)}(Y)+\#\mathcal{F}_{P,(1^21)}(Y)\big).
\end{split}
\end{equation}
Having made this observation, we are ready to prove Theorem \ref{omegaresultthm}.
\begin{proof}[Proof of Theorem \ref{omegaresultthm}]
    Our standing assumption throughout the proof is that
    \begin{equation}\label{splitfamasymp}
        \#\mathcal{F}_{P,S}(Y) = C_{1,P,S}Y + C_{2,P,S}Y^{5/6} + \mathcal{O}\left(Y^{\theta+\epsilon} \abs{P}^\omega\right),
    \end{equation}
    with $C_{1,P,S}$ as in \eqref{firstsplitcoeffeq} and $C_{2,P,S}$ as given in Theorem \ref{allfieldSplitCondThm}. The main idea is to use this relation to study the one-level density when $\psi$ is a real even Schwartz function whose Fourier transform is supported in $[-\sigma,\sigma]$ for some $\sigma > 0$. All error terms below are allowed to depend on $\psi$.

    Starting from \eqref{splitfamasymp}, we begin by bounding the error-term contribution to \eqref{onelevtermsplit} as being
    \begin{equation*}
        \ll Y^{\theta+\epsilon}q^{-n/2}\sum_{\deg P\mid n}(\deg P) \abs{P}^\omega \ll   Y^{\theta+\epsilon}q^{n(\omega + 1/2)}.
    \end{equation*}
    Hence, using that $\widehat{\psi}\left(n/N_L\right)$ is zero unless $\abs{n}\leq \sigma N_L$, we may bound the error-term contribution to \eqref{onelevexpr} as
    \begin{equation*}
        \ll \frac{Y^{\theta+\epsilon-1} }{N_L}\cdot q^{\sigma N_L(\omega + 1/2)} \ll Y^{\theta-1+\sigma(\omega+1/2)+\epsilon}.
    \end{equation*}

    We now turn our attention to the main term coming from \eqref{splitfamasymp}. We let
    \begin{equation*}
        C^*_1 = \frac{(q^2-1)(q^3-1)}{q^4(q-1)},
    \end{equation*}
    so that, using \eqref{firstsplitcoeffeq}, we see that the main-term contribution to \eqref{onelevtermsplit} is
    \begin{equation*}
        -C^*_1Yq^{-n/2}\sum_{\deg P\mid n} x_P(\deg P)\left(\delta_{2\deg P\mid n} + \delta_{3\deg P\mid n}+ 1/\abs{P}\right) = -C^*_1Yq^{-n/2}\sum_{\deg P\mid n} (\deg P)\delta_{2\deg P\mid n} + \mathcal{O}\left(Yq^{-n/6}\right),
    \end{equation*}
    where $x_P = \big(1+\abs{P}^{-1}+\abs{P}^{-2}\big)^{-1}$. Now, again by the Polynomial Prime Number Theorem, the sum over $P$ above equals $\delta_{2\mid n}\big(q^{n/2}+1\big)$. Hence, summing the expression above in \eqref{onelevexpr} gives us 
    \begin{equation*}
        -C_1^* Y \frac{2}{N_L\#\mathcal{F}(Y)}\sum_{n\geq 1}\widehat{\psi}\left(\frac{n}{N_L}\right)\delta_{2\mid n} + \mathcal{O}\left(\frac{1}{N_L}\sum_{n\geq 1}\widehat{\psi}\left(\frac{n}{N_L}\right)q^{-n/6}\right).
    \end{equation*}
    The error term is easily seen to be $\ll N_L^{-1}$ as $\widehat{\psi}$ is bounded. For the main term, we can use that $Y/\#\mathcal{F}(Y) = 1 +\mathcal{O}\big(Y^{-1/6}\big)$, together with Poisson summation, to write the above as
    \begin{equation*}
        -\frac{1}{N_L}\sum_{n\in \mathbb{Z}}\widehat{\psi}\left(\frac{2n}{N_L}\right) + \mathcal{O}\left(N_L^{-1}\right) = -\frac{1}{2}\sum_{n\in \mathbb{Z}}\psi\left(\frac{nN_L}{2}\right) + \mathcal{O}\left(N_L^{-1}\right) = -\frac{1}{2}\psi(0) + \mathcal{O}\left(N_L^{-1}\right).
    \end{equation*}
    This already provides us with the symplectic main term for the one-level density, as expected, cf. \cite{CFLS}.

    We now turn to the secondary term. This is slightly more delicate than the main term. First, we note that the contribution from a single prime to \eqref{onelevtermsplit} can be absorbed into the error term, and we may therefore restrict the summation to primes in $R$. Now, in \eqref{onelevtermsplit}, for the secondary term, the terms involving $\delta_{2\deg P \mid n}$ and $\delta_{3\deg P\mid n}$ can all be absorbed into the error term. Hence, the terms of interests are those coming from $2\#\mathcal{F}_{P,(111)}-\#\mathcal{F}_{P,(3)} + \#\mathcal{F}_{P,(1^21)}$. Now, we remark that with $C_{2,P,S}$ from Theorem \ref{allfieldSplitCondThm}, one has that
    \begin{equation}\label{splitpreponelevcor}
        2C_{2,P,(111)}-C_{2,P,(3)} + C_{2,P,(1^21)} \leq -D\abs{P}^{-1/3} + \mathcal{O}(\abs{P}^{-1}),
    \end{equation}
    for a constant $D > 0$. We delay the proof of this inequality to the end of Section \ref{splitcondchapt}.
    
    We conclude that the relevant contribution to \eqref{onelevtermsplit} from the secondary term is bounded from above by
        \begin{equation*}
        -DY^{5/6}\abs{P}^{-1/3} + \mathcal{O}\left(Y^{5/6}\abs{P}^{-1}\right).
    \end{equation*}
    One checks that the contribution from the error term above can be absorbed into our previous error term. Thus, the non-error contribution from \eqref{onelevtermsplit} coming from the secondary term is bounded by
    \begin{equation*}
         -DY^{5/6}q^{-n/2}\sum_{\deg P\mid n} (\deg P)\abs{P}^{-1/3} \leq -D'Y^{5/6}q^{n/6},
    \end{equation*}
    for some new constant $D'$. Let us now specify $\psi$ further. We choose $\psi$ to be a real even Schwartz function whose Fourier transform is real, even, nonnegative, and equal to $1$ on $[-\sigma+\epsilon', \sigma-\epsilon']$ for some $\epsilon' >0$. Using this in \eqref{onelevexpr} gives a term bounded from above by
    \begin{equation*}
        -D'Y^{5/6}\frac{2}{N_L\#\mathcal{F}(Y)}\sum_{n\geq 1}\widehat{\psi}\left(\frac{n}{N_L}\right)q^{n/6} \leq -D'Y^{5/6}\frac{2}{N_L\#\mathcal{F}(Y)}q^{(\sigma-2\epsilon')N_L/6} \leq -D''\frac{Y^{(\sigma-2\epsilon'-1)/6}}{N_L}.
    \end{equation*}

    To summarise, we have shown that
    \begin{equation*}
        \frac{1}{\#\mathcal{F}(Y)}\sum_{L\in \mathcal{F}(Y)}D_L(\psi) \leq \widehat{\psi}(0)-\frac{1}{2}\psi(0) + \mathcal{O}\left(N_L^{-1} + Y^{\theta-1+\sigma(\omega + 1/2)+\epsilon}\right) - D''\frac{Y^{(\sigma-2\epsilon'-1)/6}}{N_L}.
    \end{equation*}
    Recall also that $N_L\asymp \log_q(Y)$.
    
    To obtain a contradiction, let us assume that $\omega + \theta < 1/2$. This means that
    \begin{equation*}
        \frac{1-\theta}{\omega+ 1/2} > 1,
    \end{equation*}
    so that we may pick $\sigma = \sigma_0 > 1$ such that $\theta-1+\sigma_0(\omega + 1/2) + \epsilon < 0$, as long as $\epsilon$ is taken small enough. Write $\eta = \sigma_0-2\epsilon' > 1$. Then, we see that with this choice of $\sigma$, we have that
    \begin{equation*}
        \frac{1}{\#\mathcal{F}(Y)}\sum_{L\in \mathcal{F}(Y)}D_L(\psi) \leq -D'' \frac{Y^{(\eta-1)/6}}{N_L},
    \end{equation*}
    which in absolute value grows faster than some power of $Y$. However, from the definition, we immediately have that
    \begin{equation*}
        D_L(\psi) = \sum_{j=1}^{N_L}\sum_{n\in \mathbb{Z}}\psi\left(N_L\frac{\theta_j+2\pi n}{2\pi}\right) \ll N_L,
    \end{equation*}
    so that
    \begin{equation*}
        \frac{1}{\#\mathcal{F}(Y)}\sum_{L\in \mathcal{F}(Y)}D_L(\psi) \ll N_L \ll \log_q(Y),
    \end{equation*}
    which gives the desired contradiction.
\end{proof}

\section{Splitting conditions}\label{splitcondchapt}
We now extend the sieve used in Section \ref{fieldchapter} to also take splitting conditions for prime polynomials into account. Specifically, we fix finitely many prime polynomials $P_1,...,P_n$ and we wish to count the number of cubic fields where each $P_i$ splits according to the splitting type $S_i$. For maximal forms $v$, the splitting type of the corresponding maximal ring can be computed by reducing modulo $P$, which means we might as well study the reduction of $v$ modulo $P$. Now, in $V(R/PR)$ there are exactly five nonzero orbits, corresponding to the various splittings of the forms modulo $P$, see \cite[Section 4]{BST}.

If $\overline{P} := P_{r_1}....P_{r_m}\mid P_1...P_n$, then we let $T_{\overline{P}}(\overline{S})$ denote the set of forms whose reductions modulo $P_{r_j}$ splits according to $S_{r_j}$. We write $\overline{S}^c$ for the pointwise complement of $S$, so that $T_{\overline{P}}(\overline{S}^c)$ denotes the set of forms where none of the reductions modulo $P_{r_j}$ splits as $S_{r_j}$. Sometimes we write $S_{P_{r_j}}$ for $S_{r_j}$.

To simplify the calculations somewhat, we want to avoid treating the splitting type $(1^21)$ directly. To this end, we reorder the primes so that the only primes associated with the splitting type $(1^21)$ are $P_{n_0+1},...,P_n$ and write $\Tilde{P}_{(1^21)} $ for the product of these primes. Then, by inclusion-exclusion, we have that
\begin{equation*}
    N\big(U\cap T_{P_1...P_{n}}(S_1,...,S_n);X\big) = \sum_{\overline{P}\mid P_1...P_{n_0}}\mu(\overline{P})N\big(U\cap T_{\overline{P}, \Tilde{P}_{(1^21)}}\big(\overline{S}^c, S_{(1^21)}\big);X\big),
\end{equation*}
with $T_{\overline{P}, \Tilde{P}_{(1^21)}}\big(\overline{S}^c, S_{(1^21)}\big)$ denoting the forms whose reductions modulo primes dividing $\overline{P}$ splits according to $\overline{S}^c$, while the reduction modulo primes dividing $\Tilde{P}_{(1^21)}$ splits according to type $(1^21)$.

Again, by inclusion-exclusion, we see that
\begin{equation*}
    N\big(U\cap T_{\overline{P}, \Tilde{P}_{(1^21)}}(\overline{S}^c), S^{(1^21)};X\big) = \sum_{F}\mu(F)N(Z_{F,\overline{P}};X).
\end{equation*}
Here $Z_{P,\overline{P}}$ is simply $W_P$ if $P\nmid P_1...P_n$, else if $P\mid \overline{P}$, then $Z_{P,\overline{P}}$ is the set of forms which are either nonmaximal at $P$, or maximal at $P$ but with splitting type $S_P$. If instead $P\mid \Tilde{P}_{(1^21)}$, then $Z_{P,\overline{P}}$ are the sets of forms which are either nonmaximal at $P$, or with splitting type not equal to $(1^21)$. Finally, we define $Z_{F,\overline{P}}$ as the intersection of all $Z_{P,\overline{P}}$ with $P\mid F$. Note that our notation suppresses the dependence on $\Tilde{P}_{(1^21)}$.

We now make a few observations. If $S_P$ corresponds to an unramified splitting type, then $N(Z_{P,\overline{P}};X) = N(W_P;X) + N(T_P(S_P))$ as a form which is unramified at $P$ cannot be nonmaximal at $P$, which one sees by considering the discriminant. On the other hand, if $S_P=(1^3)$ we have that
\begin{equation*}
     N(Z_{P,\overline{P}};X) = N(W_P;X) + N(T_P(1^3); X)-N(W_P \cap T_P(1^3);X),
\end{equation*}
while if $S_P = (1^21)$ we instead find that
\begin{equation*}
    N(Z_{P,\overline{P}};X) = N(W_P;X) + N(T_P\big((1^21)^c\big); X)-N(W_P \cap T_P(1^3);X).
\end{equation*}
Note that we do not include the type $(0)$ in $(1^21)^c$ as these forms are a subset of $W_P$. Writing $V_P$ for the union of unramified splitting types and using the observations above, we may write
\begin{equation}\label{splitnonmaxdecomp}
    Z_{F,\overline{P}} = \Bigg(\bigcap_\sumstack{P\mid F\\ P\nmid P_1...P_n}W_P\Bigg) \bigcap \Bigg(\bigcap_\sumstack{P\mid (F,\overline{P})\\ S_P \text{ unramified }}\left(W_P \sqcup T_P(S_P)\right)\Bigg) \bigcap \Bigg(\bigcap_\sumstack{P\mid (F,\Tilde{P}_{(1^21)})}\left(V_P\sqcup R_{P}\right)\Bigg) \bigcap \left(\bigcap_\sumstack{P\mid (F,\overline{P})\\ S_P = (1^3)} R_{P}\right),
\end{equation}
where 
\begin{equation*}
   R_{P} =  \big(W_P \cup T_P((1^3)\big)\setminus \big(W_P \cap T_P(1^3)\big)
\end{equation*}
is considered as a set difference of a multiset.

We now introduce some notation. First, let $F_u$ be the product of the primes dividing both $\overline{P}$ and $F$ such that $S_P$ is unramified. Write $F_1$ for the product of primes dividing both $\overline{P}$ and $F$ with $S_P = (1^21)$ and similarly for $F_2$ but with the splitting type $(1^3)$. Writing $F = F'F_uF_1F_2$ we may then use \eqref{splitnonmaxdecomp} to write $N\big(Z_{F,\overline{P}};X\big)$ as
\begin{equation*}
\begin{split}
     \sum_\sumstack{h\ell\ell_1\ell_2 = F'F_uF_1 \\ h\mid F'F_u, \ell \mid F_u, \ell_1\ell_2\mid F_1} &N\bigg(W_h \cap T_{\ell}(S_\ell) \cap T_{\ell_1}(V_{\ell_1})\cap R_{\ell_2F_2} ;X\bigg) \\&= \sum_\sumstack{h\ell\ell_1\ell_2 = F'F_uF_1 \\ h\mid F'F_u, \ell \mid F_u, \ell_1\ell_2\mid F_1}\sum_\sumstack{fg\mid h\\ \alpha \in \mathbb{P}^1(R/fgR)}\mu(g) N\bigg(V_{fg,\alpha} \cap T_{\ell}(S_\ell) \cap T_{\ell_1}(V_{\ell_1})\cap R_{\ell_2F_2} ;X\abs{f}^2/\abs{h}^4\bigg).
\end{split}
\end{equation*}
To decompose $R_{\ell_2F_2}$, we use the same sieve as in \cite[Eq. (71)]{BST}, counting subrings and overrings, to see that the above equals
\begin{equation}\label{splitmiddlesums}
     \sum_\sumstack{h\ell\ell_1\ell_2 = F'F_uF_1 \\ h\mid F'F_u, \ell \mid F_u, \ell_1\ell_2\mid F_1}\sum_\sumstack{fg\mid h\\ \alpha \in \mathbb{P}^1(R/fgR)}\sum_\sumstack{abcd=\ell_2F_2\\\beta\in \mathbb{P}^1(R/(abR))} \mu(gb)N\bigg(V_{fg,\alpha}\cap V_{a,\beta}\cap V^2_{b,\beta} \cap T_{\ell}(S_\ell)\cap T_{d}(1^3) \cap  T_{\ell_1}(V_{\ell_1});X\abs{df}^2/\big(\abs{\ell_2F_2}^2\abs{c}^2\abs{h}^4\big)\bigg).
\end{equation}
Here $V^2_{b,\beta}$ denotes the forms modulo $b$ with a root $\beta$ of multiplicity at least two.

Finally, we make a few observations allowing us to separate a tail sum. As in \eqref{cutoffBound}, we have the bound
\begin{equation*}
    N(Z_{F,\overline{P}};X) \ll \frac{X}{\abs{F'}^{2-\epsilon}}.
\end{equation*}
We remark that one can obtain a better bound by keeping track of which splitting types are totally ramified, but as we are only interested in uniform bounds for all splitting types, we use this crude bound. In particular, we have that
\begin{equation}\label{splitcutoffsum}
    N\big(U\cap T_{\overline{P}, P_{(1^21)}}(\overline{S}^c), S^{(1^21)};X\big) = \sum_{F: \,\abs{F'} \leq X^{1/3}/\abs{F_uF_1F_2}^\delta}\mu(F)N(Z_{F,\overline{P}};X) + X^{2/3+\epsilon}\abs{(F,P_1...P_n)}^{\delta},
\end{equation}
where we used that the number of choices for $F_u,F_1,F_2$ is $\ll X^\epsilon$ as long as $\abs{\overline{P}}$ is bounded by some power of $X$.
\subsection{More Fourier analysis}
We now seek to evaluate the sum
\begin{equation}\label{splittingcount}
    \sum_\sumstack{ \alpha \in \mathbb{P}^1(R/fgR)}\sum_\sumstack{\beta\in \mathbb{P}^1(R/(abR))}N\bigg(V_{fg,\alpha}\cap V_{a,\beta}\cap V^2_{b,\beta} \cap T_{\ell}(S_\ell)\cap T_{d}(1^3)\cap  T_{\ell_1}(V_{\ell_1});X\abs{df}^2/\big(\abs{\ell_2F_2}^2\abs{c}^2\abs{h}^4\big)\bigg).
\end{equation}
The union over the $\beta$ of $V^2_{P,\beta}$ almost becomes the set of singular forms, except that the zero form is counted one time for every root. Hence, for $x\in V(R)$, we let $\Tilde{\omega}_P(x)$ be the indicator function of forms which are singular modulo $P$, except that $\Tilde{\omega_P}(0) = \#\mathbb{P}^1(R/PR)= \abs{P}+1$. Furthermore, we define $\Tilde{\omega}_b(x)$ multiplicatively in $b$. Then, similarly to how we arrived at \eqref{rootcountint}, we find that \eqref{splittingcount} is equal to
\begin{equation}\label{splittingcountint}
    \frac{\nu(K)}{\vol(G_0)}\int_{g\in \Lambda_{X'}S}\eta(\abs{t}) \left(\sum_{x\in gB \cap V(R)}\omega_{afg}(x)\Tilde{\omega}_b(x)\mathbf{1}_{T_{\ell}(S_\ell)}(x)\mathbf{1}_{T_{d}(1^3)}(x)\mathbf{1}_{T_{\ell_1}(V_{\ell_1})}(x)\right)\abs{t}^{-3}\abs{\lambda}^{-1}dtd\lambda df,
\end{equation}
with $X' = X\abs{df}^2/\big(\abs{\ell_2F_2}^2\abs{c}^2\abs{h}^4\big)$.

To estimate the inner sum above, we will need information about the Fourier transforms of the various functions inside the sum. Once again, we write $\chi = \chi_y$ for the character associated to $y$ and for a function $f$ we write $\widehat{f}(\chi_y) =: \widehat{f}(y)$. By \cite[Corollary 12]{TTOrb}, we have that 
\begin{equation}\label{singtransf}
    \widehat{\Tilde{\omega}}_P(y) = \abs{P}^{-1}\mathbf{1}_{\{0\}}(y) + \abs{P}^{-2}\mathbf{1}_{\{\mathrm{Disc}(y) = 0 \}}. 
\end{equation}
Moreover, \cite[Theorem 11]{TTOrb} provides exact values for the Fourier transform of the indicator functions of the various splitting types. In particular, we have that
\begin{equation*}
    \widehat{\mathbf{1}}_{T_P(1^3)}\ll \abs{P}^{-2}\mathbf{1}_{\{0\}}+\abs{P}^{-3},\,\,\,  \widehat{\mathbf{1}}_{T_P(S_P)}\ll \mathbf{1}_{\{0\}} + \abs{P}^{-1}\mathbf{1}_{T_P(1^3)} + \abs{P}^{-2},
\end{equation*}
where $S_P$ is an unramified splitting type.

Next, we use Fourier inversion to rewrite the sum in \eqref{splittingcountint} as
\begin{equation*}
    \sum_{i\leq i_0}\sum_{y \nsmod{afgd\ell\ell_1}}\chi_y\big(-x^{(i)}\big)\widehat{\omega}_{afg}(y)\widehat{\Tilde{\omega}_b}(y)\widehat{\mathbf{1}}_{T_{\ell}(S_\ell)}(y)\widehat{\mathbf{1}}_{T_{d}(1^3)}(y)\widehat{\mathbf{1}}_{T_{\ell_1}(V_{\ell_1})}(y)\sum_{x\in C_{\lambda,t}}\chi_y(x),
\end{equation*}
where the notation is from \eqref{gBdecomp}. We now refine the summation over $y$ slightly. Let $r_1\mid afg$, $r_2\mid b$, $r_3r_4\mid\ell\ell_1$ and $r_5\mid d$. We say that $y$ is of type $(r_1,...,r_5)$ if the reduction of $y$ modulo $afgd\ell\ell_1/(r_1...r_5)$ is zero, while all $y_i$ are units modulo $r_1...r_5$. Furthermore, its reduction modulo all primes dividing $r_1$ has a triple root, and its reduction modulo $r_2$ is singular. Its reduction modulo all primes dividing $r_3$ has a triple root, while its reduction modulo primes dividing $r_4$ does not have a triple root. This refinement will allow us to use the bound we have stated above for the various transforms. We thus rewrite the above as
\begin{equation}\label{splitsumfourier}
    \sum_{i\leq i_0}\sum_\sumstack{r_1\mid afg,\,\, r_2\mid b\\ r_3r_4\mid \ell \ell_1,\,\, r_5\mid d}\sum_\sumstack{y \nsmod{r_1...r_5}\\ y \text{ of type }(r_1,...,r_5)}\chi_y\big(-x^{(i)}\big)\widehat{\omega}_{afg}(y)\widehat{\Tilde{\omega}_b}(y)\widehat{\mathbf{1}}_{T_{\ell}(S_\ell)}(y)\widehat{\mathbf{1}}_{T_{d}(1^3)}(y)\widehat{\mathbf{1}}_{T_{\ell_1}(V_{\ell_1})}(y)\sum_{x\in C_{\lambda,t}}\chi_y(x).
\end{equation}

\subsection{Error terms}
When we considered the counting function for forms nonmaximal at $F$, the non-error contribution came from dual forms $y$ with all first three coordinates equal to zero. In this new setting with splitting conditions, we shall see that we find non-error contributions from $y$ of the form $(0,0,y_3,y_4)$. We begin by bounding the contribution from $y$ not of this shape.

Recall that the side-lengths of $C_{\lambda,t}$ are $c\abs{\lambda t^3},c\abs{\lambda t}, c\abs{\lambda/t}$ and $c\abs{\lambda/t^3}$. We write these lengths as $N_1, N_2, N_3$ and $N_4$ suppressing the dependence on $\lambda$ and $t$. We split into cases depending on the sizes of the various $N_j$. 

We first assume that $N_4 \geq 1$. The innermost sum over $x$ is then $\ll \abs{\lambda_0}^4$. Combined with the estimates for the various Fourier transforms, we find that \eqref{splitsumfourier} is
\begin{equation}\label{largeN4bound}
    \ll X^\epsilon\abs{\lambda_0}^4\sum_\sumstack{r_1\mid afg,\,\, r_2\mid b\\ r_3r_4\mid \ell \ell_1,\,\, r_5\mid d}\abs{r_1}^{-1}\abs{b}^{-1}\abs{r_2}^{-1}\abs{r_3}^{-1}\abs{r_4}^{-2}\abs{d}^{-2}\abs{r_5}^{-1}\sum_\sumstack{y \nsmod{r_1...r_5}\\ y \text{ of type }(r_1,...,r_5)\\ y\in C_{\lambda,t}^\perp}1.
\end{equation}
Here, by $C_{\lambda,t}^\perp$ we mean the set of $y$ such that $\chi_y$ is trivial on $C_{\lambda,t}$.

\subsubsection{Error term contribution from small $t$}

We now consider the range where in addition to $N_4\geq 1$, we also have that $N_1 < R:= \abs{r_1...r_5}$. The condition $y\in C_{\lambda,t}^\perp$ is then equivalent to $\abs{y_1} < c^{-1}\abs{r/\lambda t^3}, \abs{y_2} < c^{-1}\abs{r/\lambda t}, \abs{y_3} < c^{-1}\abs{rt/\lambda }$ and $ \abs{y_4} < c^{-1}\abs{rt^3/\lambda }$. We now proceed similarly to when we counted fields without splitting conditions. First, we bound the contribution from $y$ with discriminant zero, except those of the form $(0,0,y_3,y_4)$. In fact, we will find an acceptable error term contribution from forms not of the shape $(0,0,0,y_4)$ in this range.

All $y=y(w,z)$ with discriminant equal to zero have a double root. If this root is $[1:0]$, then $y$ has the form $er_1r_3wz^2+dz^3$. If $N_3\geq \abs{r_1r_3}$ this means that $e = 0$ so that the forms are of the shape $(0,0,0,y_4)$. Else, there are $\ll R^2\abs{t}^4/\abs{r_1r_3\lambda_0^2}$ different $y$ to choose from. As $N_1 < R$ we have that $\abs{t}\ll \abs{R/\lambda_0}^{1/3}$. Integrating the contribution from such $y$ in \eqref{splittingcountint} thus yields
\begin{equation*}
\begin{split}
    \ll X^\epsilon &\abs{\lambda_0}^{4/3} R^{8/3} \sum_\sumstack{r_1\mid afg,\,\, r_2\mid b\\ r_3r_4\mid \ell \ell_1,\,\, r_5\mid d}\abs{r_1}^{-2}\abs{b}^{-1}\abs{r_2}^{-1}\abs{r_3}^{-2}\abs{r_4}^{-2}\abs{d}^{-2}\abs{r_5}^{-1} \\ &\ll X^\epsilon \cdot \frac{X^{1/3}\abs{df}^{2/3}\abs{afgb\ell\ell_1}^{2/3}}{\abs{\ell_2F_2c}^{2/3}\abs{h}^{4/3}} \ll X^{1/3+\epsilon}\abs{\ell\ell_1}^{2/3} \ll X^{1/3+\epsilon}\abs{P_1...P_n}^{2/3}.
\end{split}
\end{equation*}
The contribution from these terms to the sum in \eqref{splitcutoffsum} is thus $\ll X^{2/3+\epsilon}\abs{P_1...P_n}^{2/3-\delta}$. The error from also summing over the various $\overline{P}$ can be absorbed in the $\epsilon$. 

We now look at the contribution from $y$ with a double root $[0:1]$. They have the form $aw^3+r_1r_3bw^2z$. If $\abs{R/r_1r_3}\leq c\abs{\lambda_0 t}$, there are $\ll R/\abs{\lambda_0 t^3}$ choices. Else, there are $R^2/\abs{\lambda_0^2 t^4}$ choices. The latter of these is clearly smaller than the error term we obtained above. The first of these instead gives a contribution $\ll X^\epsilon \abs{\lambda_0}^3$ when summed in \eqref{splitsumfourier} and then integrated in \eqref{splittingcountint}. Now,
\begin{equation*}
    \abs{\lambda_0}^3 = \frac{X^{3/4}\abs{df}^{3/2}}{\abs{\ell_2F_2c}^{3/2}\abs{h}^3} \ll \frac{X^{3/4}\abs{f}^{3/2}}{\abs{h}^3}.
\end{equation*}
In the range we are studying $\abs{\lambda_0} \ll N_1 < R\leq \abs{afgb\ell\ell_1 d}$. Write $h=fgg'$ so that
\begin{equation*}
    \frac{X^{1/4}\abs{d}^{1/2}\abs{agb\ell\ell_1}
    }{\abs{\ell_2F_2 c}^{1/2}} \leq \abs{f}^{3/2}\abs{g}^2\abs{g'}.
\end{equation*}
Similar to \eqref{lambdacubesumbound}, the contribution from $\abs{\lambda_0}^3$ can be bounded as
\begin{equation*}
    \ll X^{2/3+\epsilon}\frac{\abs{\ell_2F_2c}^{1/3}}{\abs{d}^{1/3}\abs{agb\ell\ell_1}^{2/3}} \ll X^{2/3+\epsilon}\abs{P_1...P_n}^{1/3}.
\end{equation*}

Finally, we turn to the contribution from $y$ with a double root at $[\ell:1]$, $\ell \neq 0$. These have the shape $aw^3+(b-2a\ell)w^2z + (a\ell^2-2b\ell)wz^2+b\ell^2 z^3$, where $a\ell+b \equiv 0$ modulo $r_1r_3$. Let us first assume that $\abs{\ell}^2 \geq c^{-1}R\abs{t}^3/\abs{\lambda_0r_1r_3}$ so that $\abs{b} < \abs{r_1r_3}$ by the size condition on the last coordinate. In this case, the congruence condition implies that $b$ is completely determined by $a,\ell$ and that $\abs{a\ell}\geq \abs{b}$. Furthermore, from the congruence we also see that $\abs{a\ell-b} = \abs{a\ell}$, as $2\nmid q$. By studying the first and third coefficients above, we can bound the contribution from forms of this shape as
\begin{equation*}
     \ll \sum_{\abs{a} \leq R/(\lambda t^3)} \frac{R^{1/2}\abs{t}^{1/2}}{\abs{\lambda_0  a}^{1/2}}\ll \frac{R}{\abs{\lambda_0}}.
\end{equation*}
The contribution to \eqref{largeN4bound} is $\ll X^\epsilon\abs{\lambda_0}^3$, and we have already bounded the sum of this above.

Let us now assume that $\abs{\ell}^2 < c^{-1}R\abs{t}^3/\abs{\lambda_0r_1r_3}$. Then, the first coordinate allows us to bound the number of different $a$ and the last coordinate allows us to bound the number of pairs $b,\ell$. We obtain an error
\begin{equation*}
        \ll \frac{R}{\abs{\lambda_0 t^3}} \sum_{\abs{\ell} <  R^{1/2}\abs{t}^{3/2}/(\abs{\lambda_0 r_1r_3}^{1/2})} \frac{R\abs{t^3}}{\abs{\ell^2\lambda_0 r_1r_3}} \ll \frac{R^2}{\abs{\lambda_0}^2 \abs{r_1r_3}}.
\end{equation*}
The contribution to \eqref{largeN4bound} is $\ll \abs{\lambda_0}^2$, which is smaller than the other error terms we have estimated.

To finish the study of the range where $N_1 < R$ and $N_4\geq 1$, we need to study the contribution from non-degenerate $y$. These forms have $(r_1r_3)^2\mid \mathrm{Disc}(y)$ and bounding the contribution from these $y$ as in Section \ref{fielderrorsect}, we obtain a contribution 
\begin{equation*}
    \ll \abs{ab^2dfg}\abs{\ell}^2,
\end{equation*}
to \eqref{largeN4bound}. Summing this over the various variables and over $\abs{F'}\leq X^{1/3}/\abs{F_uF_1F_2}^\delta$ gives an error $\ll X^{2/3+\epsilon}\abs{P_1...P_n}^{2-2\delta}$. At this point, we see that $\delta = 2/3$ is the best choice so far, and this will indeed turn out to be the optimal choice overall.

We now turn to the somewhat similar range when $N_1 < R$, and $N_4 < 1$. As usual, we may assume that $N_3\gg 1$. In this case, one modifies \eqref{largeN4bound} by replacing $\abs{\lambda_0}^4$ with $\abs{\lambda_0}^3\abs{t}^3$. Let us write $R = (r_1'r_1'')...(r_5'r_5'')$ with $r_i'r_i'' = r_i$ and let us assume that $y_1$ is zero modulo all $r_i'$ and a unit modulo all $r_i''$. Then, we need to pick $y_2$ and $y_3$ as zero modulo $r_1'r_3'$. In total we have 
\begin{equation*}
    \frac{\abs{r_1'r_3'r_2r_4r_5}}{1} \cdot \frac{\abs{r_1''r_3''r_2r_4r_5}^2}{\abs{\lambda_0^2t^4}} \cdot \abs{r_2r_4r_5}^2 \ll \frac{\abs{r_1r_3}^2\abs{r_2r_4r_5}^4}{\abs{\lambda_0}^3\abs{t}}.
\end{equation*}
The contribution to the analogue of \eqref{largeN4bound} with $\abs{\lambda_0^3t^3}$ in place of $\abs{\lambda_0^4}$ is thus $\ll \abs{t}^2\abs{r_1r_3}\abs{r_2^2r_4^2r_5}$. Integrating against $\abs{t}^{-3}$ up to $\abs{t}\ll \abs{\lambda}$ and summing yields a bound $\ll X^{2/3+\epsilon}\abs{P_1...P_n}^{2-2\delta}$.

\subsubsection{Error term contribution from larger $t$}
We now study the range where $N_1 \geq R$. Then, $y_1 = 0$ which means that $y_2,y_3 \equiv 0 \nsmod{r_1r_3}$. To study this range, we split it into several subranges.

Let us first assume that $N_2, N_3 < R/\abs{r_1r_3}$. This provides us with
\begin{equation*}
    \frac{\abs{r_2r_4r_5}^2}{\abs{\lambda_0}^2}\cdot \min\{R, R\abs{t}^3/\abs{\lambda_0}\}
\end{equation*}
choices for $y$. No matter if $N_4 \geq 1$ or not, the contribution to \eqref{largeN4bound} (or its analogue when $N_4 < 1$) is
\begin{equation*}
    \ll \abs{t}^3\abs{\lambda_0}\abs{r_2r_4}.
\end{equation*}
Now as $N_2 \leq \abs{r_2r_4r_5}$ we may integrate this against $\abs{t}^{-3}$ up to $\abs{t}\leq \abs{r_2r_4r_5/\lambda_0}$ obtaining a term $\ll \abs{r_2^2r_4^2r_5}\ll \abs{P_1...P_n}^2$. Summing over $\abs{F'}\leq X^{1/3}/\abs{F_uF_1F_2}^\delta$ yields an error $X^{1/3+\epsilon}\abs{P_1...P_n}^{2-\delta} \ll \abs{X}^{2/3+\epsilon}\abs{P_1...P_n}^{4/3-\delta}$, assuming $\abs{P_1...P_n}\ll X^{1/2}$. We note that our results are trivial if this is not the case, as the error term would then be $\gg X$.

We turn to the range where $N_2\geq R/\abs{r_1r_3}$ but $N_3 < R/\abs{r_1r_3}$. Then, $y_2 = 0$ as well. Hence, the number of ways to pick $y$ is
\begin{equation*}
    \ll \frac{\abs{r_2r_4r_5}\abs{t}}{\abs{\lambda_0}}\cdot\min\{R, R\abs{t}^3/\abs{\lambda_0}\}.
\end{equation*}
Similar to before, no matter if $N_4 \geq 1$ or not, the contribution to \eqref{largeN4bound} (or its analogue when $N_4 < 1$) is
\begin{equation*}
    \ll \abs{t}^4\abs{\lambda_0}^2.
\end{equation*}
Let us assume that $N_4\geq 1$ so that $\abs{t} \ll \abs{\lambda_0}^{1/3}$. Integrating the expression above against $\abs{t}^{-3}$ up to the indicated limit for $t$ yields $\ll \abs{\lambda_0}^{8/3}$. Now,
\begin{equation*}
    \abs{\lambda_0}^{8/3} = \frac{X^{2/3}\abs{df}^{4/3}}{\abs{\ell_2F_2c}^{4/3}\abs{h}^{8/3}} \ll \frac{X^{2/3}}{\abs{h}^{4/3}}.
\end{equation*}
Summing this yields a contribution $\ll X^{2/3+\epsilon}$ which is acceptable.

The case when $N_4 < 1$ above will be handled when we study the contribution of $y$ of the form $(0,0,y_3,y_4)$ in the next section. These $y$ will provide terms that cannot be absorbed into the error. Similarly, in the range where $N_1\geq R$, $N_2,N_3\geq \abs{r_2r_4r_5}$ only $y$ of the shape $(0,0,0,y_4)$ contributes, and we estimate this contribution in the next section.

\subsection{Main term contributions}
We now study contributions to \eqref{splitsumfourier} coming from $y$ of the shape $(0,0,y_3,y_4)$. By the arguments in the previous section, we may even assume that $y_3 = 0$ unless $N_4 < 1$. In fact, we may even assume that $N_4$ is smaller than some small constant in this case.

We now refine the type of $y$ even further. We write $r_5=r_6r_7$ and require that $y_3$ is congruent to zero modulo $r_6$, while it is a unit modulo $r_7$. We say that $y = (0,0,y_3,y_4)$ has type $(r_1,r_2,r_3,r_4,r_6,r_7)$. Knowing the type of $y$ then allows us to determine the splitting type of $y$ modulo the various divisors of $r_1...r_7$, which means that we may evaluate the Fourier transforms using \cite[Theorem 11]{TTOrb}. Using \eqref{rootFouriereval} and \eqref{singtransf}, we rewrite \eqref{splitsumfourier} as
\begin{equation*}
\begin{split}
    \frac{\sigma(afg)}{\abs{afg}\abs{b}}\sum_{i\leq i_0}&\sum_\sumstack{r_1\mid afg,\,\, r_2\mid b\\ r_3r_4\mid \ell \ell_1,\,\, r_6r_7\mid d}\chi_y\big(-x^{(i)}\big)\sigma(r_1)^{-1}\abs{r_2}^{-1}\prod_{P\mid b/r_2}\left(1+\abs{P}^{-1}\right)
    \\& \times \nu_1\big(\Tilde{S}_{d\ell\ell_1/(r_3r_4r_6r_7)}\big)\nu_2\big(\Tilde{S}_{r_3r_6}\big)\nu_3\big(\Tilde{S}_{r_4r_7}\big)\sum_\sumstack{y=(0,0,y_3,y_4)\\ y_3 \text{ mod } r_2r_4r_7, \,\,y_4 \text{ mod } r_1r_2r_3r_4r_6r_7\\ y_3 \in (R/r_4r_7)^*, y_4\in (R/r_1r_3r_6)^*\\ y \text{ of type $(r_1,r_2,r_3,r_4,r_6,r_7)$}}\sum_{x\in C_{\lambda,t}}\chi_y(x).
\end{split}
\end{equation*}
Here, the various $\nu_i$ functions are multiplicative with respect to the index of the argument. We have let $\Tilde{S}_P$ denote the indicator function of $(1^3)_P$ if $P\mid d$, the indicator function of $S_P$ if $P\mid \ell$ and the indicator function of $V_P$, i.e. all unramified forms modulo $P$, if $P\mid \ell_1$. The function $\nu_1(\Tilde{S}_P)$ is defined as the Fourier transform of $\Tilde{S}_P$ evaluated at the zero form. Similarly, $\nu_2(\Tilde{S}_P)$ is the Fourier transform evaluated at a form with a triple root modulo $P$, while $\nu_3(\Tilde{S}_P)$ is the Fourier transform evaluated at a form with splitting type $(1^21)$ modulo $P$. Here, we have used the fact that, see \cite[Theorem 11]{TTOrb}, the Fourier transform of $\Tilde{S}_P$, evaluated at $y$, only depends on the splitting type of $y$.

The sum over $y$ above can almost be separated into two independent sums over $y_3$ and $y_4$, except modulo $r_2$. We therefore split $r_2=r_{2,1}r_{2,2}$ and require that $y_3$ is zero modulo $r_{2,1}$, but a unit modulo $r_{2,2}$. Then the above becomes
\begin{equation}\label{splitmobiusprep}
\begin{split}
    \frac{\sigma(afg)}{\abs{afg}\abs{b}}\sum_{i\leq i_0}&\sum_\sumstack{r_1\mid afg,\,\, r_{2,1}r_{2,2}\mid b\\ r_3r_4\mid \ell \ell_1,\,\, r_6r_7\mid d}\chi_y\big(-x^{(i)}\big)\sigma(r_1)^{-1}\abs{r_{2,1}r_{2,2}}^{-1}\prod_{P\mid b/(r_{2,1}r_{2,2})}\left(1+\abs{P}^{-1}\right)
    \\& \times \nu_1\big(\Tilde{S}_{d\ell\ell_1/(r_3r_4r_6r_7)}\big)\nu_2\big(\Tilde{S}_{r_3r_6}\big)\nu_3\big(\Tilde{S}_{r_4r_7}\big)\sum_\sumstack{y=(0,0,y_3,y_4)\\ y_3 \text{ mod } r_{2,2}r_4r_7, \,\,y_4 \text{ mod } r_1r_{2,1}r_{2,2}r_3r_4r_6r_7\\ y_3 \in (R/r_{2,2}r_4r_7)^*, y_4\in (R/r_1r_{2,1}r_3r_6)^*}\sum_{x\in C_{\lambda,t}}\chi_y(x).
\end{split}
\end{equation}
When estimating subexpressions of \eqref{splitmobiusprep} in the sequel, we will make frequent use of bounds for the various Fourier transforms $\nu_j$, cf. \eqref{largeN4bound}.

Now that the summations over $y_3$ and $y_4$ above are independent of each other, we may apply Möbius inversion to remove the condition that $y_3$ and $y_4$ be units. We then see that the two innermost sums above, together with the sum over $i$, equal
\begin{equation}\label{youfoundme}
    \mu(r_1r_{2,1}r_{2,2}r_3r_4r_6r_7)\sum_{k\mid r_{2,2}r_4r_7, m\mid r_1r_{2,1}r_3r_6 }\mu(k)\mu(m)\sum_\sumstack{y=(0,0,y_3,y_4)\\ y_3 \text{ mod } k, \,\,y_4 \text{ mod } r_{2,2}r_4r_7m}\sum_{i\leq i_0}\chi_y\big(-x^{(i)}\big)\sum_{x\in C_{\lambda,t}}\chi_y(-x).
\end{equation}

By our work in the previous sections on error terms, we need only consider $y$ with $y_3 \neq 0$ if $c\abs{\lambda_0/t^3}$ is smaller than some small constant. Let us first assume that $t$ is such that $c\abs{\lambda_0/t^3}\gg 1$, whence we can absorb the terms with $y_3 \neq 0$ into the error term and consider the contribution, cf \eqref{sliceFields}, from 
\begin{equation}\label{tobemod}
  \mathbf{1}_{\{c\abs{\lambda_0}\gg \abs{t}^3\}}  \mu(r_1r_{2,1}r_{2,2}r_3r_4r_6r_7)\sum_{k\mid r_{2,2}r_4r_7, m\mid r_1r_{2,1}r_3r_6 }\mu(k)\mu(m)\abs{r_{2,2}r_4r_7m}\sum_{x_4: r_{2,2}r_4r_7m\mid x_4}\vol\big((gB)_{x_4}\big),
\end{equation}
where we slice over the last coordinate in $gB$. 

Now, we note that the contribution from the range where $c\abs{\lambda_0/t^3}\gg 1$ and $c\abs{\lambda_0/t} < \abs{k}$ is negligible, in the sense that we could absorb the entire expression above into the error term in this range. Indeed, in this range, we must have that $\abs{\lambda_0} \ll \abs{k}^{3/2}\leq \abs{r_2r_4r_7}^{3/2}$. The contribution from $y$ of the form $(0,0,0,y_4)$ to \eqref{splitmobiusprep} can thus be bounded by 
\begin{equation*}
    \abs{\lambda_0}^{7/3}\abs{t}^3,
\end{equation*}
using that $\abs{\lambda_0}^{2/3} \ll \abs{k}$. Noting that $N_4\geq 1$ and integrating over $t$ yields $\ll \abs{\lambda_0}^{8/3}$, whose contribution we have already bounded.

Instead of discarding the terms from the range above entirely, we modify the expression \eqref{tobemod} so that the above becomes, up to an acceptable error, equal to
\begin{equation*}
\begin{split}
    &\mathbf{1}_\sumstack{\{c\abs{\lambda_0}\gg \abs{t}^3\}}  \mu(r_1r_{2,1}r_{2,2}r_3r_4r_6r_7)\sum_{k\mid r_{2,2}r_4r_7, m\mid r_1r_{2,1}r_3r_6 }\mu(k)\mu(m)\abs{r_{2,2}r_4r_7m}\sum_{x_4: r_{2,2}r_4r_7m\mid x_4}\vol\big((gB)_{x_4}\big)
    \\& + \mu(r_1r_{2,1}r_{2,2}r_3r_4r_6r_7)\sum_{k\mid r_{2,2}r_4r_7, m\mid r_1r_{2,1}r_3r_6 }\mu(k)\mu(m)\abs{r_{2,2}r_4r_7mk}\mathbf{1}_\sumstack{\{c\abs{\lambda_0}\gg\abs{t}^3,\,\,c\abs{\lambda_0/t} < \abs{k}\}} \sum_{x_3\neq 0: k\mid x_3}\vol\big((gB)_{x_3,0}\big).
\end{split}
\end{equation*}
Here we used that the entire last sum can be absorbed into the error term. Indeed, we can bound the second double sum above as $\ll R\abs{r_{2,2}r_4r_7}\abs{\lambda}^2\abs{t}^4$. The contribution to \eqref{splitmobiusprep} is thus $\ll \abs{\lambda_0}^2\abs{t}^4$, which integrates to $\ll \abs{\lambda_0}^{8/3}$ as $N_4\geq 1$. Note that when $c\abs{\lambda_0/t}\geq k$, the part of the innermost sum on the first row above where $x_4=0$ is equal to $\abs{k}$ multiplied by the innermost sum on the second row. Hence, we may add the condition $x_4\neq 0$ to the first sum and remove the condition on $k$ in the indicator function in the second sum.

If $c\abs{\lambda_0/t^3}$ is small, then the only possible last coordinate in $gB$ is zero. This means that the contribution in this range from \eqref{youfoundme} is
\begin{equation*}
\begin{split}
    \mathbf{1}_\sumstack{\{c\abs{\lambda_0/t^3} \text{ small}\} } \mu(r_1r_{2,1}r_{2,2}r_3r_4r_6r_7)\sum_{k\mid r_{2,2}r_4r_7, m\mid r_1r_{2,1}r_3r_6 }\mu(k)\mu(m)\abs{r_{2,2}r_4r_7mk}\sum_{x_3\neq 0: \,k\mid x_3}\vol\big((gB)_{x_3,0}\big).
\end{split}
\end{equation*}
Adding together these two expressions yields, up to an acceptable error term, two sums
\begin{equation}\label{splitnonerrorsums}
\begin{split}
    &\mu(r_1r_{2,1}r_{2,2}r_3r_4r_6r_7)\sum_{k\mid r_{2,2}r_4r_7, m\mid r_1r_{2,1}r_3r_6 }\mu(k)\mu(m)\abs{r_{2,2}r_4r_7m}\sum_{x_4\neq 0: r_{2,2}r_4r_7m\mid x_4}\vol\big((gB)_{x_4}\big)
    \\& + \mu(r_1r_{2,1}r_{2,2}r_3r_4r_6r_7)\sum_{k\mid r_{2,2}r_4r_7, m\mid r_1r_{2,1}r_3r_6 }\mu(k)\mu(m)\abs{r_{2,2}r_4r_7mk}\sum_{x_3\neq 0: k\mid x_3}\vol\big((gB)_{x_3,0}\big),
\end{split}
\end{equation}
which we now estimate. As there is no $k$-dependence in the first row, except for $\mu(k)$, we can sum over $k$ and see that it is nonzero only if $r_{2,2}r_4r_7 = 1$, in which case it equals
\begin{equation}\label{splitmainsum}
    \mu(r_1r_{2,1}r_3r_6)\sum_{ m\mid r_1r_{2,1}r_3r_6 }\mu(m)\abs{m}\sum_{x_4\neq 0: m\mid x_4}\vol\big((gB)_{x_4}\big).
\end{equation}

The innermost sum in \eqref{splitmainsum} has already been evaluated when studying the sum over $d$ in \eqref{sliceFields}, but here we have $m$ in place of $r'$. The integral over $\lambda,t,f$ of this sum is equal to, cf \eqref{mainandtail},
\begin{equation}\label{splitmainandtail}
\begin{split}
    \frac{q\abs{m}^{-1}}{(q-1)(q^2-1)}&\abs{\lambda_0}^{4}\left(\vol(B)-\int_\sumstack{\abs{u}<\abs{m/\lambda_0} } \vol(B_u)du \right) \\&- \frac{\abs{m}^{-1/3}}{(q-1)(q^2-1)}\abs{\lambda_0}^{10/3}\left(I^\sigma_1(\lambda_0/m)-\int_\sumstack{\abs{u}< \abs{m/\lambda_0} } \vol(B_u)q^{2\epsilon(u\lambda_0/m)/3}\abs{u}^{-2/3}du\right).
\end{split}
\end{equation}

We turn our attention to the innermost sum in the last row of \eqref{splitnonerrorsums}. Previously, this sum has only been evaluated when $k=1$. For a general $k$, we have that
\begin{equation*}
   \sum_{x_3\neq 0: k\mid x_3} \int_{g\in \Lambda_{X'}S}\eta(\abs{t})\vol\big((gB)_{x_3,0}\big)\abs{t}^{-3}\abs{\lambda}^{-1}dtd\lambda =  (q-1)\sum_{c\in R\setminus\{0\}}'\abs{\lambda_0}^2\int_{ \abs{t}\geq 1}\eta(\abs{t}) \vol(B_{(ckt/\lambda_0),\,0})\abs{t}dt.
\end{equation*}
From this point onwards, we may copy the computations from Proposition \ref{implicitformcount} for the integral with $\lambda_0$ replaced with $\lambda_0/k$. We thus see that the above is equal to
\begin{equation*}
    \frac{q\abs{\lambda_0}^4}{(q-1)^2\abs{k}^2}\int_{\abs{u}\geq \abs{k/\lambda_0}}\vol(B_u)\abs{u}du- \frac{(q^2+1)\abs{\lambda_0}^3}{(q^2-1)(q-1)\abs{k}}\int_{\abs{u}\geq \abs{k/\lambda_0}}\vol(B_u)du.
\end{equation*}
The contribution from the tail of either of these integrals is $\ll \abs{\lambda_0}^2$, which one checks gives a contribution that can be absorbed into the error term. Hence, up to a small error, we have obtained
\begin{equation}\label{splitredterms}
    \frac{q\abs{\lambda_0}^4}{(q-1)^2\abs{k}^2}I_0^\sigma- \frac{(q^2+1)\abs{\lambda_0}^3}{(q^2-1)(q-1)\abs{k}}\vol(B_0).
\end{equation}

Now, we note that if $\abs{m}$ is smaller than $\abs{\lambda_0}$ multiplied with some small constant, then one checks that the tail terms from \eqref{splitmainandtail} cancels perfectly against the second term in \eqref{splitredterms}, similar to when we studied all cubic fields. On the other hand, if $\abs{m}\gg \abs{\lambda_0}$, then the contribution from the tail terms in \eqref{splitmainandtail} and the second term in \eqref{splitredterms} is $\ll \abs{\lambda_0}^3$ and using our previous calculations, this can be absorbed into the error term. Hence, our non-error terms will come from
\begin{equation}\label{nonerrorcorelist}
    \frac{q\abs{m}^{-1}}{(q-1)(q^2-1)}\abs{\lambda_0}^{4}\vol(B)- \frac{\abs{m}^{-1/3}}{(q-1)(q^2-1)}\abs{\lambda_0}^{10/3}I^\sigma_1(\lambda_0/m),\,\,\, \text{  and  }\,\,\, \frac{q}{(q-1)^2\abs{k}^2}\abs{\lambda_0}^4I_0^\sigma.
\end{equation}

The contribution to \eqref{splitnonerrorsums} from the first term above is
\begin{equation}\label{innermainsplit}
    \frac{q}{(q-1)(q^2-1)}\abs{\lambda_0}^{4}\vol(B)\mu(r_1r_{2,1}r_3r_6)\sum_{ m\mid r_1r_{2,1}r_3r_6 }\mu(m) = \frac{q}{(q-1)(q^2-1)}\abs{\lambda_0}^{4}\vol(B)
\end{equation}
if $r_1r_{2,1}r_3r_6 = 1 $ and zero else. As the first sum in \eqref{splitnonerrorsums} vanishes unless also $r_{2,2}r_4r_7 = 1$, this means that all $r_i$ equals one, which means that this is the contribution from the trivial character. Using this in \eqref{splitmobiusprep}, we obtain
\begin{equation}
\begin{split}
    \frac{q}{(q-1)(q^2-1)}\abs{\lambda_0}^{4}\vol(B)\frac{\sigma(afg)}{\abs{afg}\abs{b}}&\prod_{P\mid b}\left(1+\abs{P}^{-1}\right)\nu_1\big(\Tilde{S}_{d\ell\ell_1}\big).
\end{split}
\end{equation}
We remark that by the definition of the Fourier transform, $\nu_1(\Tilde{S}_P)$ is the density of the set that $\Tilde{S}_P$ indicates modulo $P$.

Turning to the second term from \eqref{nonerrorcorelist}, we see that its contribution to \eqref{splitnonerrorsums} is
\begin{equation*}
    - \frac{1}{(q-1)(q^2-1)}\abs{\lambda_0}^{10/3}\mu(r_1r_{2,1}r_3r_6)\sum_{ m\mid r_1r_{2,1}r_3r_6 }\mu(m)\abs{m}^{2/3}I^\sigma_1(\lambda_0/m),
\end{equation*}
under the assumption that $r_{2,2}r_4r_7 = 1$. Using this in \eqref{splitmobiusprep} gives us 
\begin{equation}\label{splitsecorderrsum}
\begin{split}
     - \frac{\abs{\lambda_0}^{10/3}}{(q-1)(q^2-1)}\frac{\sigma(afg)}{\abs{afg}\abs{b}}&\sum_\sumstack{r_1\mid afg,\,\, r_{2,1}\mid b\\ r_3\mid \ell \ell_1,\,\, r_6\mid d}\mu(r_1r_{2,1}r_3r_6)\sigma(r_1)^{-1}\abs{r_{2,1}}^{-1}\prod_{P\mid b/r_{2,1}}\left(1+\abs{P}^{-1}\right)
    \\& \times \nu_1\big(\Tilde{S}_{d\ell\ell_1/(r_3r_6)}\big)\nu_2\big(\Tilde{S}_{r_3r_6}\big)\sum_{ m\mid r_1r_{2,1}r_3r_6 }\mu(m)\abs{m}^{2/3}I^\sigma_1(\lambda_0/m).
\end{split}
\end{equation}

Finally, we examine the third term from \eqref{nonerrorcorelist} and see that its contribution to \eqref{splitnonerrorsums} is
\begin{equation*}
\begin{split}
    \frac{q\abs{\lambda_0}^4}{(q-1)^2}&I_0^\sigma\mu(r_1r_{2,1}r_{2,2}r_3r_4r_6r_7)\abs{r_{2,2}r_4r_7}\sum_{k\mid r_{2,2}r_4r_7,\,\, m\mid r_1r_{2,1}r_3r_6 }\mu(k)\mu(m)\abs{m}\abs{k}^{-1}
    \\& = \frac{q\abs{\lambda_0}^4}{(q-1)^2}I_0^\sigma\Phi(r_1r_{2,1}r_{2,2}r_3r_4r_6r_7)\mu(r_{2,2}r_4r_7).
\end{split}
\end{equation*}
Using this in \eqref{splitmobiusprep} yields
\begin{equation}\label{redsplitrsum}
\begin{split}
    \frac{q\abs{\lambda_0}^4}{(q-1)^2}I_0^\sigma\frac{\sigma(afg)}{\abs{afg}\abs{b}}&\sum_\sumstack{r_1\mid afg,\,\, r_{2,1}r_{2,2}\mid b\\ r_3r_4\mid \ell \ell_1,\,\, r_6r_7\mid d}\sigma(r_1)^{-1}\abs{r_{2,1}r_{2,2}}^{-1}\prod_{P\mid b/(r_{2,1}r_{2,2})}\left(1+\abs{P}^{-1}\right)
    \\& \times \nu_1\big(\Tilde{S}_{d\ell\ell_1/(r_3r_4r_6r_7)}\big)\nu_2\big(\Tilde{S}_{r_3r_6}\big)\nu_3\big(\Tilde{S}_{r_4r_7}\big)\Phi(r_1r_{2,1}r_{2,2}r_3r_4r_6r_7)\mu(r_{2,2}r_4r_7).
\end{split}
\end{equation}

We now compute the non-error terms. The total contribution from \eqref{innermainsplit} to $N(Z_F,\overline{P};X)$ is
\begin{equation*}
    \frac{q}{(q-1)\#\mathrm{Aut}(\sigma)}X\sum_\sumstack{h\ell\ell_1\ell_2 = F'F_uF_1 \\ h\mid F'F_u, \ell \mid F_u, \ell_1\ell_2\mid F_1}\sum_\sumstack{fg\mid h}\sum_\sumstack{abcd=\ell_2F_2} \mu(gb)\frac{\abs{df}^2}{\abs{\ell_2F_2}^2\abs{c}^2\abs{h}^4}\frac{\sigma(afg)}{\abs{afg}\abs{b}}\prod_{P\mid b}\left(1+\abs{P}^{-1}\right)\nu_1\big(\Tilde{S}_{d\ell\ell_1}\big).
\end{equation*}
Write $h=F'h_u$ with $h_u\mid F_u$. Then, the above is $Xq/\big((q-1)\#\mathrm{Aut}(\sigma)\big)$ multiplied with
\begin{equation*}
\begin{split}
    \sum_\sumstack{h_uh_1\ell\ell_1\ell_2 = F_uF_1 \\ h_u\mid F_u, \,\ell \mid F_u, \ell_1\ell_2\mid F_1}&\left(\prod_{P\mid F'h_u} \abs{P}^{-2}+\abs{P}^{-3}-\abs{P}^{-5} \right)\left(\prod_{P\mid \ell} \nu_1(S_P)\right)\left(\prod_{P\mid \ell_1} \nu_1(V_P)\right)\left(\prod_{P\mid F_2\ell_2} 2\abs{P}^{-2}-\abs{P}^{-4} \right)
    \\& = \left(\prod_{P\mid F_2} 2\abs{P}^{-2}-\abs{P}^{-4} \right)\left(\prod_{P\mid F'} \abs{P}^{-2}+\abs{P}^{-3}-\abs{P}^{-5} \right)\left(\prod_{P\mid F_u} \abs{P}^{-2}+\abs{P}^{-3}-\abs{P}^{-5} + \nu_1(S_P) \right)
    \\&\times\left(\prod_{P\mid F_1}1-\abs{P}^{-1} + \abs{P}^{-2}+\abs{P}^{-3}-\abs{P}^{-4} \right).
\end{split}
\end{equation*}
For the estimation of the tail sum over $F$, we note that the above is $\ll X \abs{F_2}^{-2}\abs{F'}^{-2}$. Summing over $\abs{F'}\geq X^{1/3}/\abs{F_uF_1F_2}^\delta$ results in an error $X^{2/3+\epsilon}\abs{P_1...P_n}^\delta$, which is acceptable with the choice $\delta = 2/3$.

For the rest of the sum over $\overline{P}$ and $F$, we introduce some notation. Write $\Tilde{P}_u$ for the product of the $P_1,...,P_n$ which corresponds to unramified splitting types. Similarly, we write $\Tilde{P}_{(1^3)}$ and $\Tilde{P}_{(1^21)}$ for the product of totally ramified and partially ramified splitting types. Then, we note that
\begin{equation}\label{FPbarchange}
    \sum_{\overline{P}\mid \Tilde{P}_{u}\Tilde{P}_{(1^3)}}\mu(\overline{P})\sum_{F}\mu(F) = \sum_{F_u\mid \Tilde{P}_u}\mu(F_u)\sum_{F_1\mid \Tilde{P}_{(1^21)}}\mu(F_1)\sum_{F_2\mid \Tilde{P}_{(1^3)}}\mu(F_2)\sum_{F': \, \big(F', F_uF_1\Tilde{P}_{(1^21)}\big)=1}\mu(F')\sum_\sumstack{\overline{P} \mid \Tilde{P}_u\Tilde{P}_{(1^3)}, \,\, (F', \overline{P})=1\\ F_u \mid\overline{P}_u, \,\,F_2\mid\overline{P}_{(1^3)} }\mu(\overline{P}).
\end{equation}
The innermost sum is zero unless $F_uF_2(F',\Tilde{P}_u\Tilde{P}_{(1^3)}) = \Tilde{P}_u\Tilde{P}_{(1^3)}$, in which case it equals $\mu(F_uF_2)$. Hence, summing the above, including the prefactor, yields
\begin{equation}\label{maintermsplitconds}
\begin{split}
    \frac{q^2-1}{q^2\#\mathrm{Aut}(\sigma)}&X\left(\prod_{P\mid \Tilde{P}_{u} }\nu_1(S_P)\left(1-\abs{P}^{-2}-\abs{P}^{-3}+\abs{P}^{-5}\right)^{-1}\right)
 \left(\prod_{P\mid \Tilde{P}_{(1^21)} }\abs{P}^{-1}\left(1+\abs{P}^{-1}+\abs{P}^{-2}\right)^{-1}\right)
    \\& \times \left(\prod_{P\mid \Tilde{P}_{(1^3)} }\abs{P}^{-2}\left(1+\abs{P}^{-1}+\abs{P}^{-2}\right)^{-1}\right),
\end{split}
\end{equation}
where we used that
\begin{equation*}
\begin{split}
    \sum_\sumstack{F': \, \big(F', F_uF_1\Tilde{P}_{(1^21)}\big)=1\\ (\Tilde{P}_u\Tilde{P}_{(1^3)}/F_uF_2)\mid F'}&\mu(F')\left(\prod_{P\mid F'} \abs{P}^{-2}+\abs{P}^{-3}-\abs{P}^{-5}\right) = \zeta_R(2)^{-1}\zeta_R(3)^{-1}\left(\prod_{P\mid \Tilde{P}_u\Tilde{P}_{(1^3)}/(F_uF_2)}\big(-\abs{P}^{-2}-\abs{P}^{-3}+\abs{P}^{-5}\big)\right)
    \\& \times \left(\prod_{P\mid P_1...P_n }\left(1-\abs{P}^{-2}-\abs{P}^{-3}+\abs{P}^{-5}\right)^{-1}\right).
\end{split}
\end{equation*}
One checks that $\nu_1(S_P)\left(1-\abs{P}^{-2}-\abs{P}^{-3}+\abs{P}^{-5}\right)^{-1}$ is $1+\abs{P}^{-1}+\abs{P}^{-2}$ multiplied with either $1/6, 1/2$ or $1/3$ depending on whether $S_P$ is of type $(111), (21)$ or $(3)$, respectively, see \cite[Theorem 11]{TTOrb}.

We turn to the reducible terms and simplify \eqref{redsplitrsum}. The presence of the factor $\mu(r_{2,2})$ means that the terms when $r_{2,1}r_{2,2} \neq 1$ all cancel each other. What remains is
\begin{equation*}
\begin{split}
    \frac{q\abs{\lambda_0}^4}{(q-1)^2}I_0^\sigma\frac{\sigma(abfg)}{\abs{afg}\abs{b}^2}&\sum_\sumstack{r_1\mid afg\\ r_3r_4\mid \ell \ell_1,\,\, r_6r_7\mid d}\sigma(r_1)^{-1}
\nu_1\big(\Tilde{S}_{d\ell\ell_1/(r_3r_4r_6r_7)}\big)\nu_2\big(\Tilde{S}_{r_3r_6}\big)\nu_3\big(\Tilde{S}_{r_4r_7}\big)\Phi(r_3r_4r_6r_7)\mu(r_4r_7).
\end{split}
\end{equation*}
We do not need to evaluate this sum for the purpose of counting cubic fields. Instead, we remark that the tail when summing over $F$ is $\ll X^{2/3+\epsilon}\abs{P_1...P_n}^{2/3}$ by the same reasoning as before for the non-reducible main term. Hence, after summing the above, we obtain a term
\begin{equation*}
    XC_3(\sigma,S_1,...,S_n),
\end{equation*}
with $C_3$ only depending on $\sigma$ and the splitting types $S_i$ (and their associated primes).

Finally, we should study the second-order term, coming from \eqref{splitsecorderrsum}. Here, the main difficulty is that $I^\sigma(\lambda_0/m)$ depends on the valuation of $\lambda_0/m$ modulo three. As before, we may bound the tail term arising from the summation over $F$.

We begin by studying the contribution coming from terms associated with $F'$ from \eqref{FPbarchange}. For this purpose, we consider the contribution of the $r_1$ dividing the part of $fg$ which divides $F'$, to \eqref{splitsecorderrsum}. First, we note that
\begin{equation*}
    -\frac{\nu(K)}{\vol(G_0)(q-1)(q^2-1)}\abs{\lambda}^{10/3}I^\sigma_1(\lambda_0/m) = -\frac{X^{5/6}}{\#\mathrm{Aut}(\sigma)(q-1)}\cdot \frac{\abs{df}^{5/3}}{\abs{\ell_2F_2c}^{5/3}\abs{h}^{10/3}}C_2(\ell_0-4\deg(m)),
\end{equation*}
with $X= q^\ell$ and
\begin{equation*}
    \ell_0 = \ell + 2\deg(f'f_u)+2\deg(d) - 2\deg(\ell_2F_2 c)-4\deg(F'h_u) =: \ell_0' + 2\deg(f')-4\deg(F'),
\end{equation*}
where we have written $h=F'h_u$ and $f',g'$ for the maximal divisor of $f,g$ such that $f'g'$ divides $F'$. 

We now compute from \eqref{splitsecorderrsum}, including the factor $\mu(g')$ from \eqref{splitmiddlesums}, that
\begin{equation}
\begin{split}
     \sum_{f'g'\mid F'}\mu(g')&\frac{\sigma(f'g')\abs{f'}^{5/3}}{\abs{f'g'}\abs{F'}^{10/3}}\sum_\sumstack{r'_1\mid f'g'}\sigma(r_1)^{-1}\sum_{ m\mid r'_1}\mu(m)\abs{m}^{2/3}C_2\big(\ell'_0 + 2\deg(f')-4\deg(F)-4\deg(m)\big)=
     \\& = \sum_{f'g' = F'}C_2^\sigma\big(\ell'_0 - 2\deg(f)\big) \left(\prod_{P\mid f'} \left(1-\abs{P}^{-2}\right)\abs{P}^{-5/3}\right)\prod_{P\mid g'}\abs{P}^{-2},
\end{split}
\end{equation}
as in \eqref{fieldinduct}. Summing this over $f'g'\mid F'$ and $F'$, using \eqref{FPbarchange}, yields
\begin{equation}\label{nonmaxcontribsplit}
\begin{split}
    &\sum_\sumstack{F': \, \big(F', F_uF_2\Tilde{P}_{(1^21)}\big)=1\\ (\Tilde{P}_u\Tilde{P}_{(1^3)}/F_uF_2)\mid F'}\mu(F')\sum_{f'g'= F'}C_2^\sigma\big(\ell'_0 - 2\deg(f')\big) \left(\prod_{P\mid f'} \left(1-\abs{P}^{-2}\right)\abs{P}^{-5/3}\right)\prod_{P\mid g'}\abs{P}^{-2}
    \\& =\sum_{(f'_0,P_1...P_n)}\sum_{f'_1 \mid (\Tilde{P}_u\Tilde{P}_{(1^3)}/F_uF_2)}\sum_{(g'_0,f_0'P_1...P_n)}C_2^\sigma\big(\ell'_0 - 2\deg(f')\big) \mu(f'_0)\left(\prod_{P\mid f'_0f'_1} \left(1-\abs{P}^{-2}\right)\abs{P}^{-5/3}\right)\mu(g'_0)\prod_{P\mid g'_0(\Tilde{P}_u\Tilde{P}_{(1^3)}/F_uF_2f'_1)}\abs{P}^{-2}
    \\& = \frac{\abs{F_uF_2}^2}{\abs{\Tilde{P}_u\Tilde{P}_{(1^3)}}^2\zeta_R(2)}\prod_{P\mid P_1...P_n}\left(1-\abs{P}^{-2}\right)^{-1}\sum_{(f'_0,P_1...P_n)}\sum_{f'_1\mid \Tilde{P}_u\Tilde{P}_{(1^3)}/(F_uF_2)}C_2^\sigma\big(\ell'_0 - 2\deg(f'_0f'_1)\big) \mu(f'_0)\abs{f'_0}^{-5/3}\\&\times\abs{f'_1}^{1/3}\mu\left(\frac{\Tilde{P}_u\Tilde{P}_{(1^3)}}{F_uF_2}\right)\prod_{P\mid f_1'}\left(1-\abs{P}^{-2}\right).
\end{split}
\end{equation}
Now, the above is not multiplicative because of the factor $C_2^\sigma$. Nevertheless, we introduce multiplicative notation that will allow us to rewrite the above into something that appears more tractable.

We first define a formal product $\otimes$ by defining $\phi_\ell(n) \otimes \phi_\ell(m) = \phi_\ell(m+n)$, where $\phi_\ell(n), \phi_\ell(m)$ are a priori simply symbols. We define an evaluation map, mapping the symbol $\phi_\ell(n)$ to $C_2^\sigma(\ell+n)$ so that the point of the definition of $\otimes$ is that \eqref{nonmaxcontribsplit} is the image under the evaluation map of
\begin{equation}\label{nonmaxtimes}
\begin{split}
    \frac{\mu\left(\frac{\Tilde{P}_u\Tilde{P}_{(1^3)}}{F_uF_2}\right)\abs{F_uF_2}^2}{\abs{\Tilde{P}_u\Tilde{P}_{(1^3)}}^2\zeta_R(2)}&\prod_{P\mid P_1...P_n}\left(1-\abs{P}^{-2}\right)^{-1}\phi_\ell\big(\ell'_0-\ell\big) \bigotimes_{P\nmid P_1...P_n}\left(\phi_\ell(0)-\phi_\ell\big(-2\deg(P)\big)\abs{P}^{-5/3}\right)\\&\bigotimes_{P\mid \Tilde{P}_u\Tilde{P}_{(1^3)}/(F_uF_2)} \left(\phi_\ell(0) + \phi_\ell\big(-2\deg(P)\big)\abs{P}^{1/3}\big(1-\abs{P}^{-2}\big)\right).
\end{split}
\end{equation}
Usually, we will simply identify expressions such as the one above with their image under the evaluation map. Note that the evaluation map does not commute with $\otimes$.

We now compute the contribution to the sum over $F$ from \eqref{splitsecorderrsum} coming from terms involving $F_2$, including the relevant contribution from \eqref{nonmaxtimes}, using the decomposition \eqref{FPbarchange}. We write $abcd=a_1a_2...d_1d_2$, where $a_2,b_2,c_2,d_2 \mid F_2$. Then, we see through a computation using the almost-multiplicativity that for any fixed integer $n$, we have that
\begin{equation*}
\begin{split}
   \sum_{F_2\mid \Tilde{P}_{(1^3)}}\frac{\mu\left(\frac{\Tilde{P}_{(1^3)}}{F_2}\right)\abs{F_2}^2}{\abs{\Tilde{P}_{(1^3)}}^2} &\bigotimes_{P\mid \Tilde{P}_{(1^3)}/F_2} \left(\phi_\ell(0) + \phi_\ell\big(-2\deg(P)\big)\abs{P}^{1/3}\big(1-\abs{P}^{-2}\big)\right)\otimes\sum_{a_1b_1c_1d_1 = F_2}\frac{\abs{d_1}^{5/3}}{\abs{Pc_1}^{5/3}}\mu(b_1)\frac{\sigma(a_1)}{\abs{a_1b_1}}\\&\times\sum_\sumstack{r_1\mid a_1,\,\, r_{2,1}\mid b_1\\ \,\, r_6\mid d_1}\mu(r_1r_{2,1}r_6)\sigma(r_1)^{-1}\abs{r_{2,1}}^{-1}\prod_{P\mid b_1/r_{2,1}}\left(1+\abs{P}^{-1}\right)
\nu_1\big(\Tilde{S}_{d_1/(r_6)}\big)\nu_2\big(\Tilde{S}_{r_6}\big)\\&\times\sum_{ m\mid r_1r_{2,1}r_6 }\mu(m)\abs{m}^{2/3}\phi_\ell\big(n+2\deg(d)-2\deg(Pc)-4\deg(m)\big)
\\& = \phi_\ell(n)\otimes \bigotimes_{P\mid \Tilde{P}_ {(1^3)}}\abs{P}^{-2}\left(1-\abs{P}^{-1}\right)\left(\phi_\ell(0)-\abs{P}^{-2/3}\phi_\ell(-2\deg P)\right).
\end{split}
\end{equation*}
The factor $\abs{P}^{-2}\left(1-\abs{P}^{-1}\right)\left(\phi_\ell(0)-\abs{P}^{-2/3}\phi_\ell(-2\deg P)\right)$ is thus akin to a local factor at $P\mid \Tilde{P}_{(1^3)}$.

Similarly, we can find the local factor at a prime dividing $\Tilde{P}_{(1^21)}$. We then compute
\begin{equation*}
\begin{split}
    \sum_{F_1\mid \Tilde{P}_{(1^21)}}&\mu(F_1)\sum_\sumstack{\ell_1\ell_2 = F_1  }\sum_\sumstack{abcd=\ell_2} \frac{\abs{d_1}^{5/3}}{\abs{\ell_2c_1}^{5/3}} \mu(b)\frac{\sigma(a)}{\abs{a}\abs{b}}\sum_\sumstack{r_1\mid a,\,\, r_{2,1}\mid b\\ r_3\mid \ell_1,\,\, r_6\mid d}\mu(r_1r_{2,1}r_3r_6)\sigma(r_1)^{-1}\abs{r_{2,1}}^{-1}\prod_{P\mid b/r_{2,1}}\left(1+\abs{P}^{-1}\right)
    \\& \times \nu_1\big(\Tilde{S}_{d\ell_1/(r_3r_6)}\big)\nu_2\big(\Tilde{S}_{r_3r_6}\big)\sum_{ m\mid r_1r_{2,1}r_3r_6 }\mu(m)\abs{m}^{2/3}\phi_\ell\big(2\deg(d)-2\deg(\ell_2c)-4\deg(m)\big)
    \\& = \bigotimes_{P\mid \Tilde{P}_{(1^21)}}\left(1-\abs{P}^{-1}\right)\left(\phi_\ell(0)\abs{P}^{-1}\left(1-\abs{P}^{-1}\right)-\phi_\ell\big(-2\deg(P)\big)\abs{P}^{-5/3} + \phi_\ell\big(-4\deg(P)\big)\abs{P}^{-4/3}\right).
\end{split}
\end{equation*}

Finally, we turn to the unramified splitting types and study
\begin{equation*}
\begin{split}
    \sum_{F_u\mid \Tilde{P}_{u}}&\frac{\mu\left(\frac{\Tilde{P}_{u}}{F_u}\right)\abs{F_u}^2}{\abs{\Tilde{P}_{u}}^2} \bigotimes_{P\mid \Tilde{P}_{u}/F_u} \left(\phi_\ell(0) + \phi_\ell\big(-2\deg(P)\big)\abs{P}^{1/3}\big(1-\abs{P}^{-2}\big)\right)\otimes\sum_\sumstack{h_u\ell = F_u }\sum_\sumstack{fg\mid h_u}\mu(g)\frac{\sigma(fg)\abs{f}^{5/3}}{\abs{fg}\abs{h_u}^{10/3}}\\ \times &\sum_\sumstack{r_1\mid fg,\,\, r_3\mid \ell }\mu(r_1r_3)\sigma(r_1)^{-1}\nu_1\big(\Tilde{S}_{\ell/r_3}\big)\nu_2\big(\Tilde{S}_{r_3}\big)\sum_{ m\mid r_1r_3 }\mu(m)\abs{m}^{2/3}\phi_\ell\big(2\deg(f)-4\deg(h_u)-4\deg(m)\big)
    \\& = \bigotimes_{P\mid \Tilde{P}_u} \left(\phi_\ell(0) \big(\nu_1(S_P)-\nu_2(S_P)\big) + \phi_\ell(-4\deg P)\abs{P}^{2/3}\nu_2(S_P)\right).
\end{split}
\end{equation*}
Now one checks, using \cite[Theorem 11]{TTOrb}, for the splitting types $(111), (21)$ and $(3)$, that \newline$\left(\phi_\ell(0) \big(\nu_1(S_P)-\nu_2(S_P)\big) + \phi_\ell(-4\deg P)\abs{P}^{2/3}\nu_2(S_P)\right)$ is equal to
\begin{equation*}
\begin{split}
    &\frac{1}{6}\left(1-\abs{P}^{-1}\right)\left(1-2\abs{P}^{-1}\right)\phi_\ell(0) + \frac{1}{6}\left(1-\abs{P}^{-1}\right)\abs{P}^{-1/3}\left(2-\abs{P}^{-1}\right)\phi_\ell(-4\deg P),
    \\& \frac{1}{2}\left(1-\abs{P}^{-1}\right)\phi_\ell(0) - \frac{1}{2}\left(1-\abs{P}^{-1}\right)\abs{P}^{-4/3}\phi_\ell(-4\deg P), \text{ and}
    \\& \frac{1}{3} \left(1-\abs{P}^{-2}\right)\phi_\ell(0) - \abs{P}^{-1/3}\left(1-\abs{P}^{-2}\right)\phi_\ell(-4\deg P) \text{ respectively.}
\end{split}
\end{equation*}

Removing the reducible forms as before, we have proven the following theorem.
\begin{theorem}
    The number of $S_3$-cubic fields whose $R$-semilocal discriminant is $X = q^\ell$, for some admissable $\ell$, where the local specification at $P_\infty$ is $\sigma$, and where $P_1,...,P_n \in R$ splits according to $S_{P_1},...,S_{P_n}$ is equal to
    \begin{equation*}
    \begin{split}
        \frac{q^2-1}{q^2\#\mathrm{Aut}(\sigma)}&X\prod_{P\mid P_1...P_n}c(P)x_P -\frac{1}{q\#\mathrm{Aut}(\sigma)}X^{5/6}\prod_{P\mid P_1...P_n} \left(1-\abs{P}^{-1}\right)\bigotimes_{P\nmid P_1...P_n} \left(\phi_\ell(0)-\phi_\ell\big(-2\deg(P)\abs{P}^{-5/3}\big)\right)     
        \\& \bigotimes_{P\mid P_1...P_n} d_P(\phi_\ell)+ \mathcal{O}\left(X^{2/3+\epsilon}\abs{P_1...P_n}^{2/3}\right),
    \end{split}
    \end{equation*}
    where $c_P = 1/6,\, 1/2,\, 1/3,\, 1/\abs{P}$ or $1/\abs{P}^2$ depending on if $S_P$ is of type $(111), (21), (3), (1^21)$, or $(1^3)$ respectively. Furthermore, $x_P = \big(1+\abs{P}^{-1}+\abs{P}^{-2}\big)^{-1}$ and $d_P(\phi_\ell)$ equals
    \begin{equation*}
\begin{split}
    &\frac{1}{6}\left(1-2\abs{P}^{-1}\right)\phi_\ell(0) + \frac{1}{6}\abs{P}^{-1/3}\left(2-\abs{P}^{-1}\right)\phi_\ell(-4\deg P),
    \,\, \frac{1}{2}\phi_\ell(0) - \frac{1}{2}\abs{P}^{-4/3}\phi_\ell(-4\deg P),
    \\& \frac{1}{3} \left(1+\abs{P}^{-1}\right)\phi_\ell(0) - \abs{P}^{-1/3}\left(1+\abs{P}^{-1}\right)\phi_\ell(-4\deg P),
    \\& \abs{P}^{-1}\left(1-\abs{P}^{-1}\right)\phi_\ell(0)-\abs{P}^{-5/3}\phi_\ell\big(-2\deg(P)\big) + \abs{P}^{-4/3}\phi_\ell\big(-4\deg(P)\big), \text{ or}
    \\& \abs{P}^{-2}\left(\phi_\ell(0) - \abs{P}^{-2/3}\phi_\ell\big(-2\deg P\big)\right),
\end{split}
\end{equation*}
depending on if the splitting type is $(111), (21), (3), (1^21)$ or $(1^3)$.
\end{theorem}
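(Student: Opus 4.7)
The plan is to combine two sieves: an outer inclusion-exclusion on the splitting types at the primes $P_1,\ldots,P_n$, and an inner discriminant-reducing sieve (as in Section \ref{fieldchapter}) to detect maximality. For the splitting conditions, I would first reorder the primes so that those with specified type $(1^21)$ are handled separately via their complements (the splitting type $(1^21)$ is inconvenient because it is neither unramified nor totally ramified, so I want the indicator $\mathbf{1}_{T_P(S_P)}$ with $S_P = (1^21)$ to appear only as $\mathbf{1}_{V_P \sqcup R_P}$). This reduces the problem to counting forms with prescribed reductions in $V(R/P_iR)$ together with maximality at every prime. A second inclusion-exclusion decomposes the set $Z_{F,\overline{P}}$ into intersections of $W_P$'s, $T_P(S_P)$'s, and $R_P$'s; the latter are broken further by counting pairs of subrings and overrings via Lemmas \ref{subringformlemma} and \ref{overringlemma}, leading eventually to an expression of the form \eqref{splittingcount}.

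Next I would express the inner sum using finite Fourier analysis over $V(R/(\text{modulus})R)$, as in Section \ref{fieldchapter}. The crucial inputs are the exact formulas for $\widehat{\omega}_P$ in \eqref{rootFouriereval}, for $\widehat{\widetilde{\omega}}_P$ in \eqref{singtransf}, and the evaluation of $\widehat{\mathbf{1}}_{T_P(S)}$ from \cite[Theorem 11]{TTOrb}. The essential observation is that, with suitable bounds, only dual forms $y$ of shape $(0,0,y_3,y_4)$ contribute to the main and secondary terms, with $y_3=0$ unless the box $C_{\lambda,t}$ is very thin in the last coordinate. After sorting $y$ by its ``type'' $(r_1,\ldots,r_7)$ recording the triple-root structure, the Fourier transforms factor completely and Möbius inversion lets me separate the $y_3$-sum from the $y_4$-sum, cf.\ \eqref{splitmobiusprep}--\eqref{youfoundme}.

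The main term contributions then come from three pieces analogous to those in Proposition \ref{implicitformcount}: a leading $X$-term from the trivial character, a secondary $X^{5/6}$-term from $I^\sigma_1(\lambda_0/m)$, and a reducible $X$-term from $I_0^\sigma$. The first two can be computed completely, using multiplicativity over primes dividing $P_1\cdots P_n$ and evaluating the local factors via \cite[Theorem 11]{TTOrb}. The secondary term is not strictly multiplicative because $C_2^\sigma$ depends on the argument only modulo $3$; to manage this I introduce the formal product $\otimes$ acting on the symbols $\phi_\ell(n)$ as in \eqref{nonmaxtimes}, so that each prime still contributes a well-defined local factor $d_P(\phi_\ell)$ which can be tabulated according to the splitting type. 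The reducible $X$-term contributes to the $C_{3}$-coefficient, which will be removed (as in the passage from Proposition \ref{maxringcount} to Theorem \ref{allfieldthm}) by restricting to irreducible forms using Lemma \ref{irrformcount}.

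The main obstacle will be obtaining the error term $\mathcal{O}(X^{2/3+\epsilon}|P_1\cdots P_n|^{2/3})$ with the correct uniformity in both $X$ and $|P_1\cdots P_n|$. This requires a careful case analysis on the sizes $N_1,\ldots,N_4$ of the box $C_{\lambda,t}$ versus the modulus $R = r_1\cdots r_5$, exactly as in Section \ref{fielderrorsect}: for $y$ with $\mathrm{Disc}(y)=0$ one bounds contributions root by root, while for nondegenerate $y$ one uses that $(r_1r_3)^2\mid\mathrm{Disc}(y)$ together with the function-field analogue of \cite[Proposition 4.5]{BTT}. The cutoff $\delta=2/3$ in \eqref{splitcutoffsum} is forced by balancing the trivial bound \eqref{cutoffBound} against these error estimates, and in several ranges the contribution is only barely acceptable, which is what makes the $|P|^{2/3}$ uniformity tight.
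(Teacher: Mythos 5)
Your proposal follows essentially the same route as the paper's own proof: the outer inclusion-exclusion treating $(1^21)$ via complements, the discriminant-reducing sieve, the Fourier-analytic sorting of dual forms by type with Möbius inversion separating the $y_3$- and $y_4$-sums, the three main-term pieces handled with the formal product $\otimes$ and local factors $d_P(\phi_\ell)$, and the error analysis via cases on the box dimensions together with the analogue of \cite[Proposition 4.5]{BTT} and the cutoff $\delta = 2/3$. The approach is correct and matches the paper in all essential respects.
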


We remark that if one wants to remove the condition on $P_\infty$, and count fields with global discriminant $Y=q^{M}$, then one may simply sum the above over $\sigma$ with $X$ chosen appropriately. We obtain the following result.
\begin{theorem}\label{allfieldSplitCondThm}
       The number of $S_3$-cubic fields whose global discriminant is $Y = q^{M}$, with $2\mid M$, and where $P_1,...,P_n \in R$ splits according to $S_{P_1},...,S_{P_n}$ is equal to
    \begin{equation*}
    \begin{split}
        \frac{(q^2-1)(q^3-1)}{q^4(q-1)}&Y\prod_{P\mid P_1...P_n}c(P)x_P -\frac{1}{q}Y^{5/6}\prod_{P\mid P_1...P_n} \left(1-\abs{P}^{-1}\right)\bigotimes_{P\nmid P_1...P_n} \left(\phi^*_M(0)-\phi^*_M\big(-2\deg(P)\abs{P}^{-5/3}\big)\right)
        \\& \bigotimes_{P\mid P_1...P_n} d_P(\phi^*_M)+ \mathcal{O}\left(X^{2/3+\epsilon}\abs{P_1...P_n}^{2/3}\right),
    \end{split}
    \end{equation*}
    where $\phi^*_M(n)$ is defined through $C_2^*(M+n)$, with 
    \begin{equation*}
        C_2^*(k) = \sum_{\sigma} \frac{q^{-5\gamma(\sigma)/6}}{\#\mathrm{Aut}(\sigma)}C_2^\sigma\big(k-\gamma(\sigma)\big),
    \end{equation*}
    where $\gamma(\sigma) = 0$ if $\sigma$ is unramified, $1$ if $\sigma$ is partially ramified and $2$ else. Explicitly, $C_2^*(k)$ is given by the following table:
\begin{center}
\begin{tabular}{ |c|c|c| } 
 \hline
  $k \equiv^3 0$ & $k \equiv^3 1$ & $k \equiv^3 2$ \\ 
 \hline
  $q+3+q^{-1}$ & $2q^{2/3}+2q^{-1/3}+q^{-4/3}$ & $q^{4/3}+2q^{1/3}+2q^{-2/3}$ \\ 
 \hline
\end{tabular}
\end{center}
Recall also that $\otimes$ is defined as a formal product acting on the symbols $\phi_M^*$ such that $\phi_M^*(n) \otimes \phi_M^*(n) = \phi_M^*(m+n)$. Furthermore, we associate the symbol $\phi_M^*(n)$ with the real number $C_2^*(M+n)$.
\end{theorem}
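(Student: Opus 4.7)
The plan is to derive Theorem \ref{allfieldSplitCondThm} by summing the formula from the preceding (unlabelled) theorem over the isomorphism classes $\sigma$ of cubic \'etale extensions of $K_\infty$. Fixing $Y = q^M$ with $M$ even, the semilocal discriminant for a given $\sigma$ is $X = X(Y,\sigma) = Yq^{-\gamma(\sigma)}$, where $\gamma(\sigma) = 0, 1, 2$ according to whether $\sigma$ is unramified, of type $(1^21)$, or of type $(1^3)$; this follows from the discussion of the relation between the global and semilocal discriminants in Section \ref{prelch}. The classification of $\sigma$ and the values of $\#\mathrm{Aut}(\sigma)$ from Section \ref{autch} feed directly into what follows.

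For the main term, I would reuse the computation already carried out in the proof of Theorem \ref{allfieldthm}, which gives $\sum_\sigma X(Y,\sigma)/\#\mathrm{Aut}(\sigma) = \frac{q^3-1}{q^2(q-1)}Y$. Combining this with the $\sigma$-independent prefactor $\frac{q^2-1}{q^2}$ and the product $\prod_{P\mid P_1\cdots P_n} c(P) x_P$ from the previous theorem immediately yields the claimed main-term constant $\frac{(q^2-1)(q^3-1)}{q^4(q-1)}$. Since only finitely many $\sigma$ occur and $X\leq Y$, the error term $\mathcal{O}\big(X^{2/3+\epsilon}\abs{P_1\cdots P_n}^{2/3}\big)$ is preserved under summation.

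The essential work lies in the secondary term. For fixed $\sigma$, the previous theorem expresses this term as $-\tfrac{X^{5/6}}{q\,\#\mathrm{Aut}(\sigma)}$ multiplied by a formal product and tensor product in the symbols $\phi_\ell(n)$; the non-symbol pieces of these products depend only on the primes $P_1,\dots,P_n$ and not on $\sigma$. Writing $\ell = M-\gamma(\sigma)$ and $X^{5/6} = Y^{5/6}q^{-5\gamma(\sigma)/6}$, the $\sigma$-independent scalar $\tfrac{1}{q}$ and the $\sigma$-independent formal products pull out of the sum over $\sigma$. What remains is that, after expanding the tensor product, each factor $\phi_\ell(n)$ ultimately evaluates to $C_2^\sigma(M+n-\gamma(\sigma))$ weighted by $q^{-5\gamma(\sigma)/6}/\#\mathrm{Aut}(\sigma)$. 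Summing over $\sigma$ thus replaces each $\phi_\ell(n)$ by $C_2^*(M+n) = \phi^*_M(n)$, producing the expression displayed in the theorem.

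The final step is the explicit evaluation of $C_2^*(k)$. By the $3$-periodicity of $C_2^\sigma$ in its argument established in Proposition \ref{secevalprop}, the function $C_2^*(k)$ depends only on $k$ modulo $3$. For each of the three residue classes, I would tabulate the contributions from the five possible local splitting types at $P_\infty$ using Proposition \ref{secevalprop} and Section \ref{autch}. Two points of care are needed: the shift $k\mapsto k-\gamma(\sigma)$ re-sorts the entries of the table in Proposition \ref{secevalprop} depending on the residue class; and for $(1^21)$ and $(1^3)$ the number of local extensions must be tracked correctly, noting in particular that the contribution from $(1^3)$ is $\sum_\sigma 1/\#\mathrm{Aut}(\sigma) = 1$ regardless of whether $q\equiv 1$ or $q\equiv 2 \nsmod{3}$ (three extensions with $\#\mathrm{Aut}=3$ in the former case, one with $\#\mathrm{Aut}=1$ in the latter). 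The anticipated main obstacle is purely this bookkeeping; collecting terms and simplifying then yields the three values $q+3+q^{-1}$, $2q^{2/3}+2q^{-1/3}+q^{-4/3}$, and $q^{4/3}+2q^{1/3}+2q^{-2/3}$ in the stated table, and Theorem \ref{allfieldSplitCondThm} follows.
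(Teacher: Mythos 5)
Your proposal is correct and follows the paper's own route: the paper obtains Theorem \ref{allfieldSplitCondThm} precisely by summing the preceding theorem over the local types $\sigma$ with $X = Yq^{-\gamma(\sigma)}$, reusing the main-term sum $\sum_\sigma X(Y,\sigma)/\#\mathrm{Aut}(\sigma)$ from the proof of Theorem \ref{allfieldthm} and replacing each symbol $\phi_\ell(n)$ by the weighted sum $\phi^*_M(n)$ by linearity. Your bookkeeping for the table of $C_2^*(k)$ (two $(1^21)$ classes of weight $1/2$ each, total weight $1$ for $(1^3)$, and the shift $k\mapsto k-\gamma(\sigma)$ permuting the residue classes in Proposition \ref{secevalprop}) matches what the paper's table encodes.
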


We end this section by proving the inequality \eqref{splitpreponelevcor} that we used when studying the one-level density. We begin by noting that
    \begin{equation*}
        \bigotimes_{Q\neq P} \left(\phi^*_M(0)-\phi^*_M\big(-2\deg(Q)\big)\abs{Q}^{-5/3}\right) = \bigotimes_{Q} \left(\phi^*_M(0)-\phi^*_M\big(-2\deg(Q)\big)\abs{Q}^{-5/3}\right)  \otimes \sum_{n=0}^\infty \phi^*_M\big(-2n\deg(P)\big)\abs{P}^{-5n/3},
    \end{equation*}
    as one checks that
    \begin{equation*}
        \left(\phi^*_M(0)-\phi^*_M\big(-2\deg(P)\big)\abs{P}^{-5/3}\right)  \otimes \sum_{n=0}^\infty \phi^*_M\big(-2n\deg(P)\big)\abs{P}^{-5n/3} = \phi^*_M(0)
    \end{equation*}
    from the definition of $\otimes$. Now, from the calculations concerning the secondary term for all cubic fields, we know that
    \begin{equation*}
        \bigotimes_{Q} \left(\phi^*_M(0)-\phi^*_M\big(-2\deg(Q)\big)\abs{Q}^{-5/3}\right) = \phi^*_M(0) - \phi^*_M(-2)q^{-2/3}.
    \end{equation*}
    We may, therefore, conclude that
    \begin{equation*}
        \bigotimes_{Q\neq P} \left(\phi^*_M(0)-\phi^*_M\big(-2\deg(Q)\big)\abs{Q}^{-5/3}\right) = \phi^*_M(0) - \phi^*_M(-2)q^{-2/3} + \mathcal{O}\left(\abs{P}^{-5/3}\right).
    \end{equation*}

    Furthermore,
    \begin{equation*}
        d_P(\phi^*_M) = \mathcal{O}\left(\abs{P}^{-1}\right) +\begin{cases}
            \frac{1}{6}\phi^*_M(0) + \frac{1}{3}\phi^*_M(-4\deg P)\abs{P}^{-1/3}, &\text{ for $P$ of type $(111)$,}
            \\ \frac{1}{2}\phi^*_M(0), &\text{ for $P$ of type $(21)$,}
            \\ \frac{1}{3}\phi^*_M(0)-\frac{1}{3}\phi^*_M(-4\deg P)\abs{P}^{-1/3},&\text{ for $P$ of type $(3)$,}
            \\ 0, &\text{ for $P$ of type $(1^21)$,}
            \\ 0,&\text{ for $P$ of type $(1^3)$.}
        \end{cases}
    \end{equation*}
    Hence, we conclude that $2C_{2,P,(111)}-C_{2,P,(3)} + C_{2,P,(1^21)}$ is equal to
    \begin{equation}\label{onelevtobound}
        -\frac{1}{q}\frac{\abs{P}^{-1/3}}{3}\left(C_2^*(M-4\deg P)-C_2^*(M-2-4\deg P)q^{-2/3}\right) + \mathcal{O}\big(\abs{P}^{-1}\big).
    \end{equation}

    At this point, a calculation shows that no matter the value of $M$ or $\deg P$, we have that
    \begin{equation*}
        C_2^*(M-4\deg P)-C_2^*(M-2-4\deg P)q^{-2/3} > 0.
    \end{equation*}
    Indeed, one checks the three different cases by using the table from Theorem \ref{allfieldSplitCondThm}. We conclude that there is some constant $D > 0$ such that \eqref{onelevtobound} is 
    \begin{equation*}
        \leq -D\abs{P}^{-1/3} + \mathcal{O}\big(\abs{P}^{-1}\big),
    \end{equation*}
    as desired.

\addcontentsline{toc}{chapter}{Bibliography}

\end{document}